\numberwithin{equation}{section}
\newtheorem{theorem}{\textbf{Theorem}}[section]
\newtheorem{theorem*}{\textbf{Theorem}}
\newtheorem{proposition}[theorem]{\textbf{Proposition}}
\newtheorem{lemma}[theorem]{\textbf{Lemma}}
\newtheorem{corollary}[theorem]{\textbf{Corollary}}
\newtheorem{remark}[theorem]{\textbf{Remark}}
\newtheorem{conjecture}[theorem]{\textbf{Conjecture}}
\newtheorem{definition/proposition}[theorem]{\textbf{Definition/Proposition}}
\providecommand{\customgenericname}{}
\newcommand{\newcustomtheorem}[2]{%
	\newenvironment{#1}[1]
	{%
		\renewcommand\customgenericname{#2}%
		\renewcommand\theinnercustomgeneric{##1}%
		\innercustomgeneric
	}
	{\endinnercustomgeneric}
}
\def\N{{\mathbb N}}
\def\R{\mathbb{R}}
\def\Z{{\mathbb Z}}
\def\C{{\mathbb C}}
\def\Q{{\mathbb Q}}
\newcommand{\CP}{\mathbb{C}\mathbb{P}}
\newcommand{\RP}{\mathbb{R}\mathbb{P}}
\def\cA{{\mathcal A}}
\def\cC{{\mathcal C}}
\def\cH{{\mathcal H}}
\def\cM{{\mathcal M}}
\def\cO{{\mathcal O}}
\def\rd{{\rm d}}
\def\la{\langle\,}
\def\ra{\,\rangle}
\DeclareMathOperator{\Ima}{im}
\DeclareMathOperator{\ind}{ind}
\DeclareMathOperator{\Id}{id}
\DeclareMathOperator{\coker}{coker}
\title{$(\RP^{2n-1},\xi_{std})$ is not exactly fillable for $n\ne 2^k$}
\author{Zhengyi Zhou}
\begin{document}
	\maketitle
\begin{abstract}
We prove that $(\RP^{2n-1},\xi_{std})$ is not exactly fillable for any $n\ne 2^k$ and there exist strongly fillable but not exactly fillable contact manifolds for all dimension $\ge 5$.
\end{abstract}
\section{Introduction}
One fundamental principle in contact topology is the dichotomy between overtwisted and tight contact structures discovered by Eliashberg \cite{eliashberg1989classification} in dimension $3$. The dichotomy was generalized to all higher dimensions recently  by Borman, Eliashberg, and Murphy \cite{borman2015existence}. The $h$-principle for overtwisted contact structures implies that they are governed by their underlying formal data. On the other hand, the more mysterious tight contact structures can be roughly categorized into the following classes based on their fillability.
$$\{\text{Weinstein fillable}\}\subseteq \{\text{Exactly fillable}\} \subseteq \{\text{Strongly fillable}\} \subseteq \{\text{Weakly fillable}\} \subseteq \{\text{Tight}\}.$$

It is an interesting question to study differences between these classes. In dimension three, those inclusions were shown to be proper by Bowden \cite{bowden2012exactly}, Ghiggini \cite{ghiggini2005strongly}, Eliashberg \cite{eliashberg1996unique}, Etnyre and Honda \cite{etnyre2002tight} respectively. In higher dimensions, the first, third and fourth inclusions were shown to be proper by Bowden, Crowley, and Stipsicz \cite{bowden2014topology}, Bowden, Gironella, and Moreno \cite{bowden2019bourgeois}, Massot, Niederkr{\"u}ger, and Wendl \cite{massot2013weak}, who also first showed the properness of the third inclusion in dimension five. See also \cite{zhou2019symplectic} for exactly fillable, almost Weinstein fillable, but not Weinstein fillable examples. The situation in dimension three differs from higher dimensions in the sense that we have gauge theoretic tools as well as better holomorphic curve theories, but also face more topological constraints. In higher dimensions, we have fewer tools but more flexibility in constructions. The first, third and fourth inclusions can be studied from more structured perspectives. The challenges in those cases are finding examples and executing the machineries. On the other hand,  it seems that we are poorly equipped to study the second inclusion in higher dimensions.  Fortunately, we have simple potential examples, i.e.\ real projective spaces with the standard contact structure induced from the double cover. It was conjectured by Eliashberg \cite[\S 1.9]{question} that they are not exactly fillable whenever the dimension is greater than $3$. In this paper, we verify this conjecture for most cases.

\begin{theorem}\label{thm:main}
	For $n\ne 2^k$, $(\RP^{2n-1},\xi_{std})$ is not exactly fillable.
\end{theorem}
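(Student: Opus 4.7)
The plan is to argue by contradiction: assume there is an exact filling $(W,\lambda)$ of $(\RP^{2n-1},\xi_{std})$, and extract an algebraic obstruction from Floer-theoretic invariants of $W$ that forces $n$ to be a power of $2$.

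First, I would analyze the Reeb dynamics of the standard contact form on $\RP^{2n-1}$, which is the $\Z/2$-quotient of the Hopf form on $S^{2n-1}$. The flow is Morse-Bott: the simple closed orbits form a copy of $\CP^{n-1}$, and the $k$-th iterate of each simple orbit likewise sweeps out a $\CP^{n-1}$. Even iterates lift to honest closed Hopf orbits on the double cover, while odd iterates correspond to Moore paths between antipodal points, giving a different index shift. I would compute the Conley-Zehnder indices of each iterate, identify the good iterates (for Morse-Bott SFT with $\Z$-coefficients), and verify that $(\RP^{2n-1},\xi_{std})$ is asymptotically dynamically convex. This last property is the crucial input: it ensures that the positive $S^1$-equivariant symplectic cohomology $SH^{+,S^1}(W)$ depends only on the contact boundary, so that I can compute it explicitly from Reeb data, independent of the putative filling.

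Second, I would use the filling itself to impose additional structure on $SH^{+,S^1}(W)$ that a disembodied contact invariant would not see, namely a unital module structure over $H^*(W;\Z)$ and compatibility with the cone sequence relating $SH$ to the cohomology of the filling. I would focus on a distinguished class $\alpha$ coming from the simple orbit family $\CP^{n-1}$ in lowest degree and examine either a high power $\alpha^m$ or a secondary operation applied to $\alpha$. The parity and spacing of the Conley-Zehnder indices from the first step then dictate the degrees in which these operations can land, and my goal is to produce a forced target class whose degree lies in a gap of the computed invariant.

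The crux, and the expected main obstacle, is producing an obstruction whose nonvanishing visibly detects the $2$-adic valuation of $n$. I anticipate this to come either from the divisibility of coefficients arising in the Morse-Bott cascade (so that a multiplication-by-$n$ map appears and must be understood mod powers of $2$), or from mod-$2$ cohomology operations, in the spirit of Steenrod squares or Hopf-invariant phenomena, imported into symplectic cohomology. The integer $n$ being a power of $2$ should be precisely the combinatorial condition under which the obstruction vanishes, which matches both the shape of the theorem and the classical special role of $n=2^k$ for $\RP^{2n-1}$; turning this numerical coincidence into a rigorous Floer-theoretic contradiction is the delicate step that the rest of the paper must carry out.
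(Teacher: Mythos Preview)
Your outline has the right general shape—use Floer theory of a hypothetical filling to extract a contradiction—but there are two genuine gaps, and the mechanism by which $n\ne 2^k$ enters is misidentified.

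First, the appeal to asymptotic dynamical convexity to make $SH^{+,S^1}(W)$ a contact invariant is problematic here. ADC-style independence results concern \emph{contractible} Reeb orbits, and they typically require topological hypotheses on the filling (vanishing $c_1$, $\pi_1$-injectivity of the boundary) that you are not entitled to assume. On $\RP^{2n-1}$ the simple orbits are non-contractible, and whether they become contractible in $W$ depends on the unknown $\pi_1(W)$; their SFT degree in the paper's normalization starts at $0$, not strictly positive. The paper does \emph{not} argue via filling-independence of an equivariant invariant. Instead it proves directly, via neck-stretching and lifting to the double cover $S^{2n-1}$, that the generator $\check{\gamma}_0^2$ (after an explicit correction by $\check{\gamma}_1$) hits the unit $1\in H^0(W;\R)$ under $d_{+,0}$. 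Combined with the ring structure and a filtered version of positive $SH$, this forces $\dim_{\R} H^*(W;\R)\le 2$, supported in degrees $0$ and possibly $n$. That is the entire symplectic input, and it holds for all $n\ge 3$ with no $2$-adic restriction.

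Second, and more seriously, the condition $n\ne 2^k$ does not enter through Floer theory at all—no divisibility in cascades, no Steenrod squares in $SH$. It enters through a separate, purely topological argument: for any almost complex filling $W$, the restriction $c_i(W)|_{\partial W}=c_i(\xi_{std})$, and one computes the total Chern class of $\xi_{std}$ via the model filling $\cO(-2)$ to get $(1+u)^n(1-2u)\equiv (1+u^{2^k})^p \pmod 2$ when $n=2^kp$ with $p$ odd. This is nontrivial precisely when $p\ge 3$, forcing certain restriction maps $H^{2^{k+1}}(W)\to\Z_2$ and $H^{2n-2^{k+1}}(W)\to\Z_2$ to be surjective. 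Playing this surjectivity against the rank bound from the symplectic step, via Lefschetz duality and the long exact sequence of the pair, yields a numerical contradiction among torsion groups. Your proposal does not contain this topological half, and the speculative Floer-theoretic replacements you suggest would not, as far as one can see, produce the needed obstruction.
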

Note that $(\RP^{2n-1},\xi_{std})$ admits a strong filling $\cO(-2)$, i.e.\ the degree $-2$ line bundle over $\CP^{n-1}$, which is not exact since the zero section $\CP^{n-1}$ is a symplectic submanifold. The condition $n\ge 3$ is necessary, since $(\RP^3,\xi_{std})$ is exactly (Weinstein) fillable by $T^*S^2$.  Moreover, it is known from \cite[\S 6.2]{eliashberg2006geometry} that $(\RP^{2n-1},\xi_{std})$ admits a Weinstein filling iff $n\le 2$. 

The proof in this paper consists of
a symplectic and a topological argument, see below. The symplectic part of the proof in this paper only requires $n\ge 3$, see Remark \ref{rmk:n=2} for how our proof sees the exception when $n=2$. The $n\ne 2^k$ condition is only used in the  topological argument in Proposition \ref{prop:Chern}.  Our proof also shows that $(\RP^{2n-1},\xi_{std}), n\ne 2^k$ admits no symplectically aspherical filling and no Calabi-Yau filling (i.e.\ strong fillings $W$ such that $c_1(W)=0\in H^2(W;\Q)$), see Remark \ref{rmk:other}.
\begin{remark}
	The $n=3$ case was also announced by Ghiggini and Niederkr{\"u}ger using a different method.
\end{remark}

More specifically, our strategy of proof is the following.

Symplectic part: $(\RP^{2n-1},\xi_{std})$ has a very nice Reeb dynamics, moreover, its double cover is the standard contact sphere. In the latter, a short Reeb orbit will bound a rigid holomorphic curve with a point constraint. For a generic point or a generic almost complex structure,  the curve lives completely in the symplectization. In particular, one can push the curve down to the hypothetical exact filling of $\RP^{2n-1}$. Such curve kills a unit hence the whole symplectic cohomology for any exact filling of $(S^{2n-1},\xi_{std})$. Then we argue that this is also the case for $(\RP^{2n-1},\xi_{std})$ for $n\ge 3$ by the curve  pushed down. Moreover, we know exactly at which stage a unit is killed, this allows us to use the filtered positive symplectic cohomology to estimate the rank of the cohomology of the filling, and prove that it is at most two.  

Topological part: We argue that it is impossible to have an almost complex filling with such small free part. For this we need to find some nontrivial restriction $H^i(W)\to H^i(\RP^{2n-1})$. We can study this map by computing Chern classes of $\xi_{std}$ since $c_i(W)|_{\RP^{2n-1}}=c_i(\xi_{std})$. The $n\ne 2^k$ condition is used to ensure the total Chern class of $\xi_{std}$ is not trivial.  

Note that $(\RP^{2n-1},\xi_{std})$ can be viewed as the link of the quotient singularity $\C^n/\Z_2$. Our method is adaptable to other quotient singularities as well. We use $(S^{2n-1}/\Z_k,\xi_{std})$ to denote the link of $\C^n/\Z_k$, where $\Z_k$ acts on $\C^n$ by multiplication by $e^{\frac{2\pi i}{k}}$. Then we prove the following.
\begin{theorem}\label{thm:gen}
Let $p$ be an odd prime, assume $n$ has the $p$-adic representation $n=\sum_{s=0}^k a_s p^s$. Then $(S^{2n-1}/\Z_p, \xi_{std})$ has no exact filling if $\sum_{s=0}^k a_s>3p-3$.
\end{theorem}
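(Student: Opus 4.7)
The plan is to run the two-step argument of Theorem \ref{thm:main} with the double cover $S^{2n-1}\to\RP^{2n-1}$ replaced by the $p$-fold cover $S^{2n-1}\to S^{2n-1}/\Z_p$ and with $\F_2$-coefficients replaced by $\F_p$-coefficients throughout. Assume for contradiction that $W$ is an exact filling of $(S^{2n-1}/\Z_p,\xi_{std})$.

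For the symplectic part, the Reeb flow on $S^{2n-1}/\Z_p$ is Morse-Bott with simple orbit families of action $2\pi k/p$ for $k=1,2,\dots$, each parametrised by $\CP^{n-1}$. The point-constrained rigid plane used in the $\Z_2$ case lives in the symplectization of $S^{2n-1}$ and has asymptote a primitive closed Reeb orbit; for a suitably chosen $\Z_p$-equivariant almost complex structure it descends to a rigid plane in the symplectization of $S^{2n-1}/\Z_p$ whose asymptote sits at action $2\pi$ (i.e.\ $k=p$). Pushing this curve into $W$ kills a unit in the filtered positive symplectic cohomology at the first integer action window. The $p-1$ strictly fractional action windows $(2\pi(k-1)/p,\,2\pi k/p]$ with $1\le k\le p-1$ carry the only generators that can survive, and an iterated filtered argument modelled on the proof of Theorem \ref{thm:main} should produce a rank bound of the form $\dim_{\F_p}H^*(W;\F_p)\le 3(p-1)$.

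For the topological part, view $L:=S^{2n-1}/\Z_p$ as a principal $S^1$-bundle over $\CP^{n-1}$ with Euler class $-ph$. The Gysin sequence identifies $\pi^* h$ with a generator of $H^2(L;\F_p)\cong\F_p$, and the splitting $TL\cong\pi^* T\CP^{n-1}\oplus\R$ (the tangent to the fibre being trivial) gives
\[
c(\xi_{std})=(1+\pi^* h)^n,\qquad c_i(\xi_{std})=\binom{n}{i}(\pi^* h)^i\in H^{2i}(L;\F_p).
\]
By Lucas' theorem, $\binom{n}{i}\not\equiv 0\pmod p$ for exactly $\prod_s(a_s+1)$ values of $i\in\{0,\dots,n\}$, and excluding $i=0$ and $i=n$ (where $(\pi^* h)^n=0$) leaves $\prod_s(a_s+1)-2$ nontrivial Chern classes of $\xi_{std}$ in degrees $2,4,\dots,2(n-1)$. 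For each such $i$ the identity $c_i(W)|_L=c_i(\xi_{std})\ne 0$ forces $H^{2i}(W;\F_p)\ne 0$, so together with $H^0(W;\F_p)$ we obtain $\dim_{\F_p}H^*(W;\F_p)\ge\prod_s(a_s+1)-1$. The hypothesis $\sum_s a_s>3p-3$ combined with $a_s\le p-1$ forces at least four nonzero digits, whence a short extremal argument (pack three $(p-1)$'s and a single $1$) shows $\prod_s(a_s+1)\ge 2p^3$; since $2p^3>3(p-1)+1$ for every odd prime $p$, this contradicts the symplectic rank bound.

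The principal obstacle is the symplectic rank bound itself. For $p=2$ only one fractional action window exists and the filtered argument essentially collapses to a single step. For general odd $p$ one must organise an iterated long exact sequence (or small spectral sequence) across the $p-1$ fractional windows and verify at each stage that the Morse-Bott generators, which come in $H^*(\CP^{n-1})$-families, cancel cleanly against the differentials induced by the descended point-constrained plane. Pinning down the constant $3(p-1)$ rather than a larger polynomial in $p$ is where the bulk of the analytic work lies; once that rank bound is secured, the topological step is an immediate Lucas-theorem calculation.
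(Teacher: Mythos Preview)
Your approach has a genuine gap at the interface between the two halves: the symplectic rank bound and the Chern-class lower bound live over incompatible coefficient rings and cannot be compared directly. The key count in the symplectic argument is the algebraic number of rigid planes in $\widehat{X}$ asymptotic to $\check{\gamma}^p_0$ through a generic point $q$; the $p$-fold covering argument from the sphere gives $\#\cM^\infty(\check{\gamma}^p_0,q)=p$. Over $\R$ this is nonzero and Proposition~\ref{prop:kill} then forces $SH_+^{*,\le p+\epsilon}(W;\R)\to H^{*+1}(W;\R)$ to be surjective, yielding a rank bound on $H^*(W;\R)$ (the paper obtains $\dim_\R H^{\mathrm{even}}(W;\R)\le p$ and $\dim_\R H^{\mathrm{odd}}(W;\R)\le p-2$, Proposition~\ref{prop:Z_k}). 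Over $\F_p$, however, $p=0$ and the unit is \emph{not} killed at this action level; there is no mechanism here to bound $\dim_{\F_p}H^*(W;\F_p)$, and the $p$-torsion in $H^*(W;\Z)$ that your Chern-class argument detects can in principle be large without contradicting anything the Floer theory sees over $\R$. So the comparison you propose, $\prod_s(a_s+1)-1$ against $3(p-1)$, pits an $\F_p$-lower bound against an $\F_p$-upper bound that the method does not actually deliver.

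The paper resolves this mismatch by a different route. It keeps the $\R$-coefficient rank bound and uses it only to locate an index $i\in(0,n/2)$ with $c_i(\xi_{std})\ne 0$ in $\F_p$ such that $H^{2i-1}(W)$, $H^{2i}(W)$, $H^{2i+1}(W)$ (and, by the symmetry $i\mapsto n-i$, the three dual groups) are all torsion over $\Z$. Lefschetz duality and the universal coefficient theorem then convert the long exact sequence of $(W,\partial W)$ into two sequences among finite abelian groups in which the restrictions $H^{2i}(W)\to\Z_p$ and $H^{2n-2i}(W)\to\Z_p$ are surjective (Proposition~\ref{prop:Z_p}); comparing orders gives simultaneously $|H^{2n-2i}(W)|=|H^{2i+1}(W)|$ and $|H^{2n-2i}(W)|=p\,|H^{2i+1}(W)|$, the contradiction. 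The threshold $\sum_s a_s>3p-3$ is precisely what guarantees that the set of admissible indices $i$ outnumbers the degrees excluded by the $\R$-rank bound, so that such an $i$ exists. In short, the Lucas count is not used to globally lower-bound $\dim_{\F_p}H^*(W;\F_p)$, but to supply enough candidate degrees for a local torsion-order contradiction in the style of the proof of Theorem~\ref{thm:main}.
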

Note that the $n=2$ case is again exactly fillable, see \cite{eliashberg1993legendrian}.  The threshold  is by no means sharp, and the $p$-adic information is very likely unnecessary. We will speculate in the proof that the symplectic part works for  $n\ge p+1$. While the $p$-adic information of $n$ is used in the topological argument to guarantee that the total Chern class of $(S^{2n-1}/\Z_p, \xi_{std})$ has enough nontrivial terms. Note that $n=p$ is the threshold for the quotient singularity to be canonical, or admit a crepant resolution, and when $n>p$ the singularity becomes terminal. As explained in \cite{mclean2016reeb}, being terminal is equivalent to having positive minimal SFT degree for some contact form, which is closely related to the concept of asymptotically dynamical convexity \cite{lazarev2016contact}\footnote{The main difference between \cite{mclean2016reeb} and \cite{lazarev2016contact} lies in the treatment of non-contractible orbits, which play important roles in this paper. More precisely, \cite{mclean2016reeb} assigned a rational SFT degree to every Reeb orbits in the case of $c_1(\xi)=0\in H^2(Y,\Q)$ and $H^1(Y,\Q)=0$ and being terminal is equivalent to that there exists a contact form such that all Reeb orbits have positive SFT degree. On the other hand, \cite{lazarev2016contact} only considered contractible Reeb orbits in the case of  $c_1(\xi)=0$ and asymptotically dynamical convexity is roughly admitting a contact form such that all contractible Reeb orbits have positive (integer) SFT degree.  In the case of $(\RP^{2n-1},\xi_{std}),n\ge 3$, since those non-contractible simple Reeb orbits play an important role, the relevant concept is the former one.}. In particular, the result here bears certain similarity with \cite{zhou2019symplectic}. However, we do not assume the exact filling to have any topological properties (e.g.\ vanishing first Chern class and $\pi_1$-injectivity) as in \cite{zhou2019symplectic}.  Our approach can be adapted to study more general quotient singularity $\C^n/G$. The relation between exact fillability and its algebro-geometric properties is an interesting question, we wish to study it in the future.

Combining with the $\Z_2$ and $\Z_3$ quotient singularities, we will show that the second inclusion is proper for all dimension $\ge 5$, hence complete the question of proper inclusions for all dimension $>1$.
\begin{theorem}\label{thm:strong}
	For every $n\ge 3$, then there exists a $2n-1$ dimensional contact manifold which is strongly fillable but not exactly fillable. 
\end{theorem}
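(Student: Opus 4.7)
The plan is to split by whether $n$ is a power of $2$, and in each case produce an explicit closed contact $(2n-1)$-manifold that is strongly but not exactly fillable.

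If $n \ne 2^k$ for every $k$, the example is $(\RP^{2n-1}, \xi_{std})$: by Theorem~\ref{thm:main} it is not exactly fillable, while, as recorded right after Theorem~\ref{thm:main}, it is strongly filled by $\cO(-2) \to \CP^{n-1}$ (with the zero section a symplectic $\CP^{n-1}$, which obstructs exactness). This case is therefore immediate and requires no new input.

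If $n = 2^k$ with $k \ge 2$, Theorem~\ref{thm:main} does not apply, so I would replace $\RP^{2n-1}$ with the link of the next simplest cyclic quotient singularity, $(S^{2n-1}/\Z_3, \xi_{std})$. The total space of $\cO(-3) \to \CP^{n-1}$ furnishes a strong filling, in exact analogy with the $\Z_2$ case, and again fails to be exact for the same reason. To rule out exact fillability I would invoke the argument behind Theorem~\ref{thm:gen} at $p = 3$: since $n = 2^k \ge 4 = p+1$, the symplectic half of the proof of Theorem~\ref{thm:gen} goes through unchanged, and the whole problem reduces to a single topological Chern-class nontriviality on $S^{2n-1}/\Z_3$.

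The main obstacle is that the stated digit-sum hypothesis $\sum_s a_s > 3p-3 = 6$ of Theorem~\ref{thm:gen} can fail for small $n = 2^k$ (for instance the $3$-adic digit sums of $4, 8, 16$ are only $2, 4, 4$), so one cannot simply quote the theorem as a black box. However, that digit-sum bound is only a convenient sufficient condition for the genuine topological input, which is the nonvanishing of a specific component of $c(\xi_{std})$ on $S^{2n-1}/\Z_3$. Because $S^{2n-1}/\Z_3$ is the unit circle bundle of $\cO(-3) \to \CP^{n-1}$, the total Chern class of $\xi_{std}$ is given by a completely explicit polynomial in the hyperplane class, and for $n = 2^k$ the required mod-$3$ nonvanishing follows from the coprimality $\gcd(2,3)=1$. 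Feeding this Chern-class computation into the symplectic-cohomological mechanism of Theorem~\ref{thm:gen} then rules out an exact filling and closes the $n = 2^k$ case, completing the proof for every $n \ge 3$.
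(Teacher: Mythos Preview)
Your case split (use $\RP^{2n-1}$ when $n\ne 2^k$, and $S^{2n-1}/\Z_3$ when $n=2^k\ge 4$) is exactly the paper's, and your observation that $c_1(\xi_{std})\equiv n\not\equiv 0\pmod 3$ for $n=2^k$ is correct and is what the paper uses. The gap is in the sentence ``the whole problem reduces to a single topological Chern-class nontriviality.'' It does not. The mechanism of Theorem~\ref{thm:gen} (and of Theorem~\ref{thm:main}) needs, in addition to $c_i(\xi_{std})\ne 0$, that the groups $H^{2i-1}(W),H^{2i}(W),H^{2i+1}(W)$ and their Lefschetz-dual partners are torsion; only then do the two exact sequences collapse to the size contradiction $|G|=p|G|$. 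For $p=3$ the rank bound coming from Proposition~\ref{prop:Z_k} is only $\sum\dim H^{\mathrm{even}}(W;\R)\le 3$, so after accounting for $H^0$ there may still be room for $H^2(W;\R)\ne 0$, and then the $i=1$ argument you are proposing breaks down.

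The paper closes this gap by a genuine extra case. If $H^2(W;\R)=0$ one runs exactly the Chern-class argument you outline with $i=1$ (and $n-1$). If $H^2(W;\R)\ne 0$, then by the symmetry $H^{2n-j}(W;\R)\cong H^j(W;\R)$ and the rank bound one is forced to $H^n(W;\R)=0$; since $n=2^k$ is even, the adjacent odd groups $H^{n\pm 1}(W;\R)$ also vanish (again by symmetry and the odd-degree bound), and the long exact sequence at degree $n$ becomes
\[
0\to H^{n+1}(W)\to H^n(W)\to \Z_3\to H^n(W)\to H^{n+1}(W)\to 0,
\]
which is impossible for finite groups. Note this second branch uses no Chern class at all. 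So your outline is the right strategy, but you must supply this dichotomy; asserting that a nonzero $c_1$ alone feeds into the machine is not enough.
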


\subsection*{Acknowledgement} The author is supported by the National Science Foundation Grant No. DMS1638352. It is a great pleasure to acknowledge the Institute for Advanced Study for its warm hospitality. The author is in debt to Paolo Ghiggini for very helpful comments and pointing out a gap in an earlier draft. The author is deeply grateful to anonymous referees for many helpful comments and suggestions. This paper is dedicated to the memory of Chenxue.

\section{Proof of Theorem \ref{thm:main}}\label{s2}
In the following, the coefficient is $\Z$ unless otherwise specified. The contradiction leading to the proof of Theorem \ref{thm:main} is the following.
\begin{proposition}\label{prop:key}
	For $n\ge 3$, if $(\RP^{2n-1},\xi_{std})$ has an exact filling $W$, then the following holds.
	\begin{enumerate}
		\item If $n$ is odd, then the total cohomology $H^*(W;\R)=\R$, supported in degree $0$.
		\item If $n$ is even, then $H^*(W;\R) =\R$ or $\R \oplus\R$, and in the latter case, the cohomology is precisely supported in degree $0$ and $n$. 
	\end{enumerate}
\end{proposition}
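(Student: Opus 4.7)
The plan is to run the Cieliebak--Oancea long exact sequence
\[
\cdots \to H^*(W;\R) \to SH^*(W;\R) \to SH^*_+(W;\R) \xrightarrow{\delta} H^{*+1}(W;\R) \to \cdots
\]
after first establishing $SH^*(W;\R)=0$ and then computing $SH^*_+(W;\R)$ through an action-filtered Morse--Bott model adapted to the explicit Reeb dynamics of $(\RP^{2n-1},\xi_{std})$.

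To show vanishing of $SH^*(W;\R)$, I would exploit the double cover $(S^{2n-1},\xi_{std})$. This sphere admits the ball filling, and in its symplectization a generic point constraint together with a generic compatible almost complex structure produces a rigid $J$-holomorphic plane asymptotic to the simple Hopf orbit. Passing to the double cover $\widetilde W$ of the hypothetical exact filling $W$ (an almost complex filling of $S^{2n-1}$) and neck-stretching along $\partial \widetilde W$ with the point placed in the symplectization end, the standard dimension count for $n\ge 3$ forces the limiting curve to remain in the cylindrical piece. Descending by the $\Z_2$ quotient, one obtains a curve in the symplectization of $\RP^{2n-1}$ whose positive puncture carries a Reeb orbit bounding the unit of $SH^*(W;\R)$. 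Hence $SH^*(W;\R)=0$. This is the step that uses $n\ge 3$.

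From $SH^*(W;\R)=0$ the exact sequence yields $SH^{*-1}_+(W;\R)\cong H^*(W;\R)$ for $*\ge 1$, together with $H^0(W;\R)=\R$ mapping to zero in $SH^0$. So it suffices to bound $SH^*_+(W;\R)$. The Reeb flow on $\RP^{2n-1}$ is a free $S^1$-action with quotient $\CP^{n-1}$: simple (non-contractible) orbits form one $\CP^{n-1}$-family at minimal action, and $k$-th iterates form $\CP^{n-1}$-families at action $k$ times the minimum, contractible exactly when $k$ is even. In the Morse--Bott model each such family contributes a shifted copy of $H^*(\CP^{n-1};\R)$, with shifts determined by Robbin--Salamon indices that are a standard computation for quotients of Hopf flows. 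The action filtration bounds $SH^*_+$ above by the sum of these contributions. The key extra input from the vanishing argument is that the chain-level class killing the unit sits in the lowest-action family, removing one generator from this bound; combined with the cohomological-degree constraint $0\le *\le 2n$ coming from $\dim W=2n$, this forces all classes in the simple-orbit family except at most a single top-degree one to be absent from $H^*(W;\R)$. That remaining potential class lies in degree $n$ precisely when the Robbin--Salamon shift makes it an integer of the right parity, which happens exactly when $n$ is even.

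The main obstacle is the bookkeeping for higher-action families: one must verify that within each $\CP^{n-1}$-family beyond the lowest action, all classes either cancel among themselves or pair with classes in the next family under the Morse--Bott differentials, leaving no residual contribution in an allowed cohomological degree for $H^*(W;\R)$. This is where the detailed Robbin--Salamon indices for iterated (both non-contractible and contractible) orbits on $\RP^{2n-1}$ have to be combined with the degree bound from $\dim W=2n$, and where the parity of the CZ-index shift between consecutive iterates produces the dichotomy between the $n$ odd case (no surviving class in positive degree) and the $n$ even case (at most one class, in degree $n$).
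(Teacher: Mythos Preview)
Your overall strategy has a genuine gap that the paper's argument is specifically designed to avoid. You propose to prove $SH^*(W;\R)=0$ and then \emph{compute} (or at least bound) the full $SH^*_+(W;\R)$, using the long exact sequence to read off $H^*(W;\R)$. The problem is the step you yourself flag as ``the main obstacle'': controlling the infinitely many Morse--Bott families of iterated Reeb orbits. You offer no mechanism for this, and in fact none is available at this level of generality --- the differentials between different action families in $SH^*_+$ depend on the unknown filling $W$ (through holomorphic planes in $W$), and you cannot use a $\Z$-grading on $SH^*$ to rule out contributions, since $c_1(W)$ need not vanish and the grading is only $\Z_2$. Your appeal to ``the cohomological-degree constraint $0\le *\le 2n$'' therefore has no force.

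The paper's key idea, which you are missing, is to work with \emph{filtered} positive symplectic cohomology at a carefully chosen action level $a=2+\epsilon$, just large enough to include $\check\gamma_0^2$. One shows (Propositions~\ref{prop:unit}--\ref{prop:vanish}) that a specific closed class built from $\check\gamma_0^2$ hits the unit $1\in H^0(W;\R)$ under the connecting map $SH^{*,\le a}_+\to H^{*+1}(W;\R)$; by the ring-theoretic trick in Proposition~\ref{prop:kill} this forces the \emph{filtered} connecting map to be surjective. On the other hand, at this action level there are only finitely many generators, and the within-family Morse--Bott differential (Proposition~\ref{prop:MB}) bounds the image by rank~$2$, supported in even degree. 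This gives $\dim H^*(W;\R)\le 2$ directly, without ever touching higher-action families. The final statement that the extra class can only sit in degree~$n$ is then pure topology (Lefschetz duality and the universal coefficient theorem), not a Robbin--Salamon index computation. Incidentally, your claim that each family contributes $H^*(\CP^{n-1};\R)$ is also off: the critical manifold of parameterized orbits is $\RP^{2n-1}$, so over $\R$ each family contributes two classes, not $n$.
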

We will first prove Theorem \ref{thm:main} assuming Proposition \ref{prop:key}. First of all, we observe the following fact.
\begin{proposition}\label{prop:Chern}
Let $W$ be a strong filling of $(\RP^{2n-1},\xi_{std})$ for $n=2^kp$ for odd $p\ge3$, then $H^{2^{k+1}}(W) \to H^{2^{k+1}}(\RP^{2n-1})=\Z_2$ and  $H^{2n-2^{k+1}}(W) \to H^{2n-2^{k+1}}(\RP^{2n-1})=\Z_2$, induced by the inclusion $\RP^{2n-1}\hookrightarrow W$,  are both surjective.
\end{proposition}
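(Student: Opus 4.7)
The plan is to exploit the identity $c_i(TW)|_{\RP^{2n-1}} = c_i(\xi_{std})$, which holds because along the boundary $TW \cong \xi_{std}\oplus\C$ as complex vector bundles (the Reeb direction pairs with the outward normal to give a trivial complex line). Exhibiting nontrivial classes $c_{2^k}(\xi_{std})\in H^{2^{k+1}}(\RP^{2n-1})$ and $c_{n-2^k}(\xi_{std})\in H^{2n-2^{k+1}}(\RP^{2n-1})$ would then force the two restriction maps to surject onto their $\Z_2$ targets.

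The first step is to identify $\xi_{std}$ on $\RP^{2n-1}$ with the pullback $\pi^*T\CP^{n-1}$ along the natural $S^1$-bundle $\pi\colon\RP^{2n-1}\to\CP^{n-1}$ (inherited from the descent of the Hopf action and the $\Z_2$-invariance of the standard contact form). This bundle is the unit circle bundle of $\cO(-2)$, hence has Euler class $\pm 2h$ with $h\in H^2(\CP^{n-1};\Z)$ the hyperplane class. The Gysin sequence
\begin{equation*}
H^{2j-2}(\CP^{n-1}) \xrightarrow{\cup 2h} H^{2j}(\CP^{n-1}) \xrightarrow{\pi^*} H^{2j}(\RP^{2n-1};\Z) \to H^{2j-1}(\CP^{n-1})=0
\end{equation*}
then shows, for $1\le j\le n-1$, that $\pi^*(h^j)$ is the generator of $H^{2j}(\RP^{2n-1};\Z)=\Z_2$. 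Combined with $c(T\CP^{n-1})=(1+h)^n$ this gives $c_j(\xi_{std}) = \binom{n}{j}\,\pi^*(h^j)$, which is nontrivial precisely when $\binom{n}{j}$ is odd.

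The second step is a binary expansion check. Writing $n=2^k p$ with $p\ge 3$ odd, the $2$-adic digits of $n$ consist of a $1$ in position $k$ together with further $1$'s in strictly higher positions, so the digits of $2^k$ sit inside those of $n$; by Lucas' theorem $\binom{n}{2^k}$ is odd, and $\binom{n}{n-2^k}=\binom{n}{2^k}$ is odd as well. Thus $c_{2^k}(\xi_{std})$ generates $H^{2^{k+1}}(\RP^{2n-1};\Z)$ and $c_{n-2^k}(\xi_{std})$ generates $H^{2n-2^{k+1}}(\RP^{2n-1};\Z)$, and composing with the restriction identity yields both surjections.

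The main delicate point is the Gysin identification, which must simultaneously confirm that the integral cohomology in the relevant degree is exactly $\Z_2$ and that $\pi^*(h^j)$ hits its generator rather than $0$; everything after that is a mechanical parity count, with the $n\ne 2^k$ hypothesis (here in the strengthened form $n=2^kp$) entering only through Lucas' theorem.
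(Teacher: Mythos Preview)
Your proof is correct and follows essentially the same approach as the paper: both arguments reduce the surjectivity to showing $c_{2^k}(\xi_{std})$ and $c_{n-2^k}(\xi_{std})$ are nonzero in $H^*(\RP^{2n-1};\Z)\cong\Z_2$, compute these via the Gysin sequence for the circle bundle over $\CP^{n-1}$, and finish with a mod~$2$ binomial-coefficient check. The only cosmetic differences are that the paper routes the Chern-class computation through the ambient total space $\cO(-2)$ (picking up an extra $(1-2u)$ factor that vanishes mod~$2$ anyway) and verifies the parity by a Frobenius-type expansion $(1+u)^{2^k p}\equiv(1+u^{2^k})^p\pmod 2$, whereas you identify $\xi_{std}\cong\pi^*T\CP^{n-1}$ directly and invoke Lucas' theorem; these are equivalent bookkeeping choices.
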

\begin{proof}
	Since $c_i(W)|_{\RP^{2n-1}} = c_i(\xi_{std})$, we can prove the claim if both $c_{2^k}(\xi_{std})$ and $c_{n-2^k}(\xi_{std})$ are nonzero. In the following, we will compute the total Chern class of $\xi_{std}$ from the standard filling $\cO(-2)$. The total Chern class of the total space $\cO(-2)$ can be computed from the total Chern class of $T\cO(-2)|_{\CP^{n-1}}$, where the bundle splits into $T\CP^{n-1}\oplus \cO(-2)$. Since the total Chern class of $T\CP^{n-1}$ is $(1+u)^n$ and the total Chern class of the bundle $\cO(-2)$ is $1-2u$ \cite[Theorem 14.4]{milnor2016characteristic}, where $u$ is the generator of $H^2(\CP^{n-1})=H^2(\cO(-2))$. Hence the total Chern class of the total space $\cO(-2)$ is $(1+u)^n(1-2u)$.  
	
	Using the fact that $(\sum a_i)^2 = \sum a_i^2 \mod 2$, we have
	$$(1+u)^n(1-2u)=(1+u)^{p2^k}=(1+pu+\ldots+pu^{p-1}+u^p)^{2^k}=1+p^{2^k}u^{2^k}+ \ldots + p^{2^k}u^{(p-1)2^k} + u^{n}  \mod 2.$$
	Therefore we have both $c_{2^k}(\cO(-2))$ and $c_{n-2^k}(\cO(-2))$ are not zero in $\Z_2$. By the Gysin exact sequence, the restriction map $\Z = H^{2i}(\cO(-2))\to H^{2i}(\RP^{2n-1})=\Z_2$ is the mod $2$ map for $1\le i \le n-1$. Then the claim follows.
\end{proof}
\begin{remark}
	When $n=2^k$, the total Chern class of $\xi_{std}$ is trivial.
\end{remark}

\begin{proof}[Proof of Theorem \ref{thm:main}]
	Assume $(\RP^{2n-1},\xi_{std})$ has an exact filling $W$ and $n=2^kp$ for odd $p\ge 3$. Then by Proposition \ref{prop:key}, $H^{2^{k+1}-1}(W)$, $H^{2^{k+1}}(W)$, $H^{2^{k+1}+1}(W)$, $H^{2n-1-2^{k+1}}(W)$, $H^{2n-2^{k+1}}(W)$, $H^{2n+1-2^{k+1}}(W)$ are all torsions. By looking at the long exact sequence of $(W,\RP^{2n-1})$, we have the following,
	$$0\to H^{2^{k+1}}(W,\RP^{2n-1})\to H^{2^{k+1}}(W) \to \Z_2 \to H^{2^{k+1}+1}(W,\RP^{2n-1})\to H^{2^{k+1}+1}(W)\to 0,$$
	$$0 \to H^{2n-2^{k+1}}(W,\RP^{2n-1})\to H^{2n-2^{k+1}}(W) \to \Z_2 \to H^{2n+1-2^{k+1}}(W,\RP^{2n-1})\to H^{2n+1-2^{k+1}}(W) \to 0,$$
	which also implies that all groups above are torsions. Then by Lefschetz duality and universal coefficient theorem, we have 
	$$H^{2^{k+1}}(W,\RP^{2n-1}) \simeq H_{2n-2^{k+1}}(W) \simeq H^{2n+1-2^{k+1}}(W),$$
	$$H^{2^{k+1}+1}(W,\RP^{2n-1}) \simeq H_{2n-1-2^{k+1}}(W) \simeq H^{2n-2^{k+1}}(W),$$
	$$H^{2n-2^{k+1}}(W,\RP^{2n-1}) \simeq H_{2^{k+1}}(W) \simeq H^{2^{k+1}+1}(W),$$
	$$H^{2n+1-2^{k+1}}(W,\RP^{2n-1}) \simeq H_{2^{k+1}-1}(W) \simeq H^{2^{k+1}}(W).$$
	Therefore the two long exact sequences become
	$$0\to H^{2n+1-2^{k+1}}(W)\to H^{2^{k+1}}(W) \to \Z_2 \to H^{2n-2^{k+1}}(W)\to H^{2^{k+1}+1}(W)\to 0,$$
	$$0 \to H^{2^{k+1}+1}(W)\to H^{2n-2^{k+1}}(W) \to \Z_2 \to H^{2^{k+1}}(W)\to H^{2n+1-2^{k+1}}(W) \to 0.$$
	By Proposition \ref{prop:Chern}, $H^{2^{k+1}}(W) \to \Z_2$ and $H^{2n-2^{k+1}}(W) \to \Z_2 $ above are surjective. Then the long exact sequences above imply that $H^{2n-2^{k+1}}(W)\simeq H^{2^{k+1}+1}(W)$. Since all those groups are finite groups, we have $|H^{2n-2^{k+1}}(W)|=|H^{2^{k+1}+1}(W)|$ and also $|H^{2n-2^{k+1}}(W)|=2|H^{2^{k+1}+1}(W)|$, which is a contradiction.
\end{proof}
The rest of the section is devoted to the proof of Proposition \ref{prop:key}.

\subsection{Setup of symplectic cohomology}
Note that $(\RP^{2n-1},\xi_{std})$ is equipped with the Boothby-Wang contact form $\alpha_{std}$ such that the Reeb vector gives the Hopf fibration, and the periods of Reeb orbits are given by $\N^+$. We can choose a $C^2$-small perfect Morse function $f$ on $\CP^{n-1}$. Let $q_0,\ldots,q_{n-1}$ denote the critical points of $f$ ordered by their critical values. Let $\pi:\RP^{2n-1}\to \CP^{n-1}$ denote the projection, then the $r=1+\pi^*f$ hyperspace in the symplectization $\RP^{2n-1}\times (\R_+)_r$ gives a perturbed contact form $\alpha_f$, such that the Reeb vector field for $\alpha_f$ is 
\begin{equation}\label{eqn:Reeb}
R_f=\frac{1}{1+\pi^*f} R_{std}+Z, \text{ with } Z \in \xi_{std}, \iota_Z d\alpha_{std}|_{\xi_{std}}=-\frac{1}{(1+\pi^*f)^2}\pi^*df|_{\xi_{std}}.
\end{equation}
In other words, $Z$ is the horizontal lift of the Hamiltonian vector of $\frac{1}{1+\pi^*f}$ on $(\CP^{n-1},\omega_{FS})$ using $\alpha_{std}$ as a connection on $\RP^{2n-1}$. We may fix a small $\epsilon>0$ with $2n\epsilon<1$, such that $f(q_i)=i\epsilon$. Since $f$ is $C^2$-small, we may assume that the Hamiltonian vector of $\frac{1}{1+\pi^*f}$, i.e.\ $\pi_*Z$, has no non-constant orbit of period smaller than $3$. As a consequence of \eqref{eqn:Reeb}, we have the following.
\begin{enumerate}
	\item There is a simple Reeb orbit $\gamma_i$, such that $\pi(\gamma_i)=q_i$ and the period of $\gamma_i$ is $1+i\epsilon=1+f(q_i)$. 
	\item All Reeb orbits of period smaller than $2(1+n\epsilon)$ are non-degenerate and is either $\gamma_i$ or its double cover $\gamma^2_i$.
\end{enumerate}
Let $p:S^{2n-1}\to \RP^{2n-1}$ denote the double cover. Then $(S^{2n-1},p^*\alpha_{f})$ is roughly an ellipsoid, in particular, all Reeb orbits of period smaller than $2(1+n\epsilon)$ are the lifts of $\gamma_i^2$. Among them, the lift of $\gamma_0^2$ has the minimal period and minimal Conley-Zehnder index $n+1$.

In the following, we will fix a specific choice of $f$. Let $\epsilon$ be a small positive number, we choose $f$ to be 
$$\epsilon \left(\displaystyle\sum_{i=0}^{n-1} \frac{i}{1+i\epsilon}|z_i|^2 \right)\left/ \left(\displaystyle\sum_{i=0}^{n-1} \frac{|z_i|^2}{1+i\epsilon}\right)\right..$$
Then the function $f$ satisfies all conditions above. Moreover, $(S^{2n-1},p^*\alpha_f)$ is indeed the ellipsoid given by $\sum_{i=0}^{n-1}\frac{|z_i|^2}{2(1+i\epsilon)} = 1$ in $(\C^n,\frac{i}{2\pi}\sum_{i=0}^{n-1}\rd z_i\wedge \rd \overline{z}_i)$. Here when we identify the symplectic manifold $(\RP^{2n-1}\times \R_+, d(r\alpha_{std}))$ with $(\{\C^n-\{0\}\}/\Z_2,\frac{i}{2\pi}\sum_{i=0}^{n-1} dz_i\wedge d\overline{z}_i)$, we have $r=\frac{1}{2}\sum_{i=0}^{n-1} |z_i|^2$. An ellipsoid $\sum_{i=0}^{n-1} \frac{|z_i|^2}{a_i}=1$ is called non-degenerate iff $a_i/a_j\not \in \Q$ for any $i\ne j$. If the ellipsoid is non-degenerate, the induced contact form is non-degenerate and all Reeb orbits are circles in each coordinate plane. In our case, if we pick $\epsilon\notin \Q$, $(S^{2n-1},p^*\alpha_f)$ is a non-degenerate ellipsoid.

Let $(W,\lambda)$ be an exact filling of $(\RP^{2n-1},\alpha_f)$, i.e.\ the Liouville form $\lambda$ restricted to $\RP^{2n-1}$ is $\alpha_f$. Let $\widehat{W}:=W\cup \partial W \times (1,\infty)$ denote the completion. We will set up our moduli spaces for symplectic cohomology following \cite{zhou2019symplectic} combined with the autonomous setting in \cite{bourgeois2009symplectic}. Instead of using a single Hamiltonian as in \cite{zhou2019symplectic}, we will phrase symplectic cohomology as a direct limit over Hamiltonians with finite slopes like the classical construction in \cite{seidel2006biased}. We refer to them as well as references therein for details of symplectic cohomology. In particular, we will use Hamiltonians and almost complex structures satisfies the following.
\begin{enumerate}
	\item $H=0$ on $W$ and $H=h(r)$ on $\partial W \times (1,\infty)$ with $h'(r)=a$ for $r\gg 0$ and $h''(r)>0$ unless $h=0$ or $h'(r)=a$. We will be only interested in the case $a \in (0,2(1+n\epsilon))$ and is not the period of a Reeb orbit. The class of Hamiltonians with slope $a$ is denoted as $\cH_a$. Since $h'(r)=a$ iff $r\ge 1+w$ for some $w>0$, we will call the minimum $w$ with such property the width of the Hamiltonian $H$.
	\item The almost complex structure $J_t$ is independent of $t$ on $W\cup \partial W \times (1,r_0]$, where $h'(r_0)=1+(n-1)\epsilon$, i.e.\ $r_0$ is the last level containing a simple Hamiltonian orbit. $J_t$ is compatible with symplectic structure and is cylindrical convex near every $r$ such that $h'(r)$ is the period of a Reeb orbit, i.e.\ $J_t\xi=\xi$ and $J_t(r\partial r) = R_{\alpha_f}$. This guarantees the integrated maximum principle \cite{abouzaid2010open} can be applied to obtain compactness of moduli spaces, see \cite[Lemma 2.5]{zhou2019symplectic} for details. 
\end{enumerate}
We also choose a Morse function $g$ on $W$, such that $\partial_r g>0$ on $\partial W$ and $g$ has a unique minimum. The extra requirement on $g$ will be specified later. We use $\overline{\gamma}$ to denote the $S^1$ family of Hamiltonian orbits corresponding to $\gamma$. Then we pick two different generic points $\hat{\gamma}$ and $\check{\gamma}$ on $\Ima \overline{\gamma}$, this is equivalent to choosing a Morse function $g_{\overline{\gamma}}$ with one maximum and one minimum on $\Ima \overline{\gamma}$ in \cite[\S 3]{bourgeois2009symplectic}. By \cite[Lemma 3.4]{bourgeois2009symplectic}, the Morse function $g_{\overline{\gamma}}$ can be used to perturb the Hamiltonian $H$ to get two non-degenerate orbits from $\overline{\gamma}$, which are often denoted by $\hat{\gamma}$ and $\check{\gamma}$ in literatures with $\mu_{CZ}(\hat{\gamma})=\mu_{CZ}(\gamma)+1$ and $\mu_{CZ}(\check{\gamma})=\mu_{CZ}(\gamma)$.   

Then we have a Floer cochain complex $C(H)$, which is a free $\R$-module generated by critical points of $g$ with Morse index as grading, and two generators $\hat{\gamma},\check{\gamma}$ for each Reeb orbit $\gamma$ of period smaller than $a$, with $\Z_2$ gradings $n-\mu_{CZ}(\gamma)-1$ and $n-\mu_{CZ}(\gamma)$.   The differential is defined by counting rigid cascades \cite[(39),(40)]{bourgeois2009symplectic}. Moreover, we have a subcomplex $(C_0(H),d_0)$, which is the Morse cochain complex of $g$ and a quotient complex $(C_+(H),d_+):=C(H)/C_0(H)$ generated by the generators from Reeb orbits. The differential on $C(H)$ also  has a component $d_{+,0}:C_+(H)\to C_0(H)$, which induces the connecting map $H^*(C_+(H))\to H^{*+1}(C_0(H))$ in the tautological long exact sequence. We can achieve transversality using our almost complex structure, because on $r\le r_0$ all orbits are simple \cite[Proposition 3.5]{bourgeois2009symplectic}. The differential can be described in a pictorial way as follows.
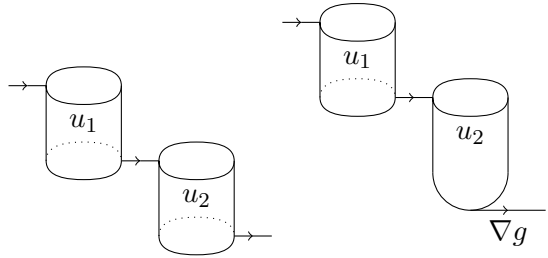
\begin{figure}[H]
	\begin{tikzpicture}
	\draw (0,0) to [out=90, in = 180] (0.5, 0.25) to [out=0, in=90] (1,0) to [out=270, in=0] (0.5,-0.25)
	to [out = 180, in=270] (0,0) to (0,-1);
	\draw[dotted] (0,-1) to  [out=90, in = 180] (0.5, -0.75) to [out=0, in=90] (1,-1);
	\draw (1,-1) to [out=270, in=0] (0.5,-1.25) to [out = 180, in=270] (0,-1);
	\draw (1,0) to (1,-1);
	\draw[->] (1,-1) to (1.25,-1);
	\draw (1.25,-1) to (1.5,-1);
	\draw (1.5,-1) to [out=90, in = 180] (2, -0.75) to [out=0, in=90] (2.5,-1) to [out=270, in=0] (2,-1.25)
	to [out = 180, in=270] (1.5,-1) to (1.5,-2);
	\draw[dotted] (1.5,-2) to  [out=90, in = 180] (2, -1.75) to [out=0, in=90] (2.5,-2);
	\draw (2.5,-2) to [out=270, in=0] (2,-2.25) to [out = 180, in=270] (1.5,-2);
	\draw (2.5,-1) to (2.5,-2);
	\draw[->] (2.5,-2) to (2.75,-2);
	\draw (2.75,-2) to (3,-2);
	\draw[->] (-0.5,0) to (-0.25,0);
	\draw (-0.25,0) to (0,0);
	\node at (0.5,-0.5) {$u_1$};
	\node at (2, -1.5) {$u_2$};
	\end{tikzpicture}
	\begin{tikzpicture}
	\draw (0,0) to [out=90, in = 180] (0.5, 0.25) to [out=0, in=90] (1,0) to [out=270, in=0] (0.5,-0.25)
	to [out = 180, in=270] (0,0) to (0,-1);
	\draw[dotted] (0,-1) to  [out=90, in = 180] (0.5, -0.75) to [out=0, in=90] (1,-1);
	\draw (1,-1) to [out=270, in=0] (0.5,-1.25) to [out = 180, in=270] (0,-1);
	\draw (1,0) to (1,-1);
	\draw[->] (1,-1) to (1.25,-1);
	\draw (1.25,-1) to (1.5,-1);
	\draw (1.5,-1) to [out=90, in = 180] (2, -0.75) to [out=0, in=90] (2.5,-1) to [out=270, in=0] (2,-1.25)
	to [out = 180, in=270] (1.5,-1) to (1.5,-2);
	\draw (2.5,-2) to [out=270, in=0] (2,-2.5) to [out = 180, in=270] (1.5,-2);
	\draw (2.5,-1) to (2.5,-2);
	\draw[->] (-0.5,0) to (-0.25,0);
	\draw (-0.25,0) to (0,0);
	\draw[->] (2,-2.5) to (2.5,-2.5);
	\draw (2.5,-2.5) to (3,-2.5);
	\node at (2.5, -2.8) {$\nabla g$};  
	\node at (0.5,-0.5) {$u_1$};
	\node at (2, -1.5) {$u_2$};
	\end{tikzpicture}
	\caption{$d_+$ and $d_{+,0}$ from $2$ level cascades}
\end{figure}
\begin{enumerate}
	\item Each unlabeled horizontal arrow is a negative gradient flow of $g_{\overline{\gamma}}$ in $\Ima \overline{\gamma}$, i.e.\ flow toward $\check{\gamma}$.
	\item $u$ is a solution to the Floer equation $\partial_s u+J_t(\partial_tu-X_H)=0$ modulo $\R$ translation.
	\item Every intersection point of line with surface satisfies the obvious matching condition.
\end{enumerate}
More formally, the differential is defined by counting the following compactified moduli spaces.
\begin{enumerate}
	\item For $p,q\in Crit(g)$, 
	$$\cM_{p,q}:=\overline{\{ \gamma:\R_s \to W|\gamma'+\nabla g =0, \lim_{s\to \infty} \gamma=p,\lim_{s\to -\infty} \gamma=q \}/\R}.$$
	\item For $\gamma_+,\gamma_- \in \{\check{\gamma},\hat{\gamma}| \forall S^1 \text{ family of orbits } \overline{\gamma}\}$, a $k$-cascade from $\gamma_+$ to $\gamma_-$ is a tuple $(u_1,l_1,\ldots,l_{k-1},u_k)$, such that
	\begin{enumerate}
		\item $l_i$ are positive real numbers.
		\item nontrivial $u_i\in \{u:\R_s\times S^1_t\to \widehat{W}| \partial_su+J_t(\partial_tu-X_H)=0, \lim_{s\to \infty} u \in \overline{\gamma}_{i-1}, \lim_{s\to -\infty} u \in \overline{\gamma}_{i}\}/\R$ such that $\gamma_+\in \overline{\gamma}_0$ and $\gamma_-\in \overline{\gamma}_{k}$, where the $\R$ action is the translation on $s$.
		\item $\phi_{-\nabla g_{\overline{\gamma}_i}}^{l_i}(\lim_{s\to -\infty}u_i(s,0))=\lim_{s\to \infty}u_{i+1}(s,0)$ for $1\le i\le k-1$, $\gamma_+=\lim_{t\to \infty}\phi^{-t}_{-\nabla g_{\overline{\gamma}_0}}(\lim_{s\to \infty} u_1(s,0))$, and $\gamma_-=\lim_{t\to \infty}\phi^{t}_{-\nabla g_{\overline{\gamma}_k}}(\lim_{s\to -\infty} u_k(s,0))$, where $\phi^t_{-\nabla g_{\overline{\gamma}}}$ is the time $t$ flow of $-\nabla g_{\overline{\gamma}}$ on $\Ima \overline{\gamma}$.
	\end{enumerate}
	Then we define $\cM_{\gamma_+,\gamma_-}$ to be the compactification of the space of all cascades from $\gamma_+$ to $\gamma_-$. The compactification involves the usual Hamiltonian-Floer breaking of $u_i$ as well as degeneration corresponding to $l_i=0,\infty$.  The $l_i=0$ degeneration is equivalent to a Hamiltonian-Floer breaking $\lim_{s\to -\infty}u_i=\lim_{s\to \infty}u_{i+1}$. In particular, they can be glued or paired, hence do not contribute (algebraically) to the boundary of $\cM_{\gamma_+,\gamma_-}$. The $l_i=\infty$ degeneration is equivalent to a Morse breaking for $g_{\overline{\gamma}_i}$, which will contribute to the boundary of $\cM_{\gamma_+,\gamma_-}$.

	\item For $\gamma_+ \in  \{\check{\gamma},\hat{\gamma}| \forall S^1 \text{ family of orbits } \overline{\gamma}\}$ and $q\in Crit(g)$, a $k$-cascades from $\gamma_+$ to $q$ is a tuple  $(l_0,u_1,l_1,\ldots,u_k,l_{k})$ as before, except
	\begin{enumerate}
		\item $u_k\in \{u:\C \to \widehat{W}|\partial_s u+J_t(\partial_tu-X_H)=0,\lim_{s\to \infty} u \in \overline{\gamma}_{k-1}, u(0)\in W^{\circ}\}/\R$, where we use the identification $\R\times S^1 \to \C^*, (s,t)\mapsto e^{2\pi(s+it)}$. $W^{\circ}$ is the interior of $W$, where the Floer equation is $\overline{\partial}_J u=0$, hence the removal of singularity implies that $u(0)$ is a well-defined notation.
		\item $q=\lim_{t\to \infty}\phi^t_{\nabla g}(u_k(0))$.
	\end{enumerate}
    Then $\cM_{\gamma_+,q}$ is defined to be the compactification of the space of all cascades from $\gamma_+$ to $q$. 
\end{enumerate}
\begin{remark}
	It is important to note that in $\cM_{x,y}$, $x$ is the asymptotic orbit at the positive and $y$ is the asymptotic orbit at the negative end, which is opposite to the notation in \cite{zhou2019symplectic}.
\end{remark}
All of the moduli spaces above can be equipped with coherent orientations. In view of the notation above, the differentials are defined as follows.
\begin{eqnarray*}
d_0 (p) &= &\sum_{q,\dim \cM_{p,q}=0} \# \cM_{p,q} q,\\
d_+ (\gamma_+) &= &\sum_{\gamma_-,\dim \cM_{\gamma_+,\gamma_-}=0} \# \cM_{\gamma_+,\gamma_-} \gamma_-,\\
d_{+,0} (\gamma_+) &= &\sum_{q,\dim \cM_{\gamma_+,q}=0} \# \cM_{\gamma_+,q} q,
\end{eqnarray*}
where $\# \cM$ denotes the signed count of the zero-dimensional compact moduli space $\cM$. Our symplectic action follows the cohomological convention
$$\cA(\gamma) = -\int_{\gamma} \lambda + \int_{\gamma} H,$$
where $\lambda$ is a Liouville form such that $\lambda|_{\partial W}=\alpha_f$. Our convention for $X_H$ is $\rd \lambda (\cdot, X_H)=\rd H$. Every non-constant Hamiltonian orbit is contained in a level set $\partial W \times \{r\}$, such that $h'(r)$ is the period of a Reeb orbit. The symplectic action of this Hamiltonian orbit is given by $-rh'(r)+h(r)$. Since for any non-trivial solution $u$ solving $\partial_s u+J_t(\partial_tu-X_H)=0$, we have $\cA(u(\infty))<\cA(u(-\infty))$, hence if $u(\infty)\in \overline{\gamma}$ and $u(-\infty)\in \overline{\beta}$, then the period of $\gamma$ is larger than the period of $\beta$. 

\begin{remark}\label{rmk:SH}
	A few remarks on different models of symplectic cohomology are in order.
	\begin{enumerate}
		\item\label{1} The most classical construction is using linear time-dependent Hamiltonians, which is also $C^2$ small Morse on $W$, the total symplectic cohomology is the direct limit of the Hamiltonian-Floer cohomology when the slope converges to $\infty$, c.f. \cite{seidel2006biased}.
		\item\label{2} One can also use Hamiltonians that is autonomous. In particular, $H=h(r)$ on the cylindrical end $\partial W \times (1,\infty)$ with $h''>0$ can be used iff the contact form is non-degenerate. Then cascades model is needed to deal with the $S^1$-family of Hamiltonian orbits. This is the construction in \cite{bourgeois2009symplectic}.
		\item\label{3} One can use time-dependent Hamiltonians that are zero on $W$ and are small perturbations to the Hamiltonian in \eqref{2} on the cylindrical end. Then it is very close to a Morse-Bott situation and we need to introduce an auxiliary Morse function $g$ on $W$ such that $\partial_r g>0$ on $\partial W$. This is the approach taken in \cite{zhou2019symplectic}. The vanishing of the Hamiltonian on $W$ makes it easier to apply neck-stretching in this setup.
		\item\label{4} When the contact form is only Morse-Bott non-degenerate in the sense of \cite{bourgeois2003morse}, if we take Hamiltonians in the form of $h(r)$. The Hamiltonian orbits will come in more general family than $S^1$-family. Then we can pick an auxiliary Morse function on the space parameterizing the family and apply a cascade construction. Moreover, one can also choose $H$ to be zero on $W$ and pick another auxiliary Morse function on $W$. This is the approach taken in \cite[\S 4.1]{diogo2019symplectic}\footnote{This complex (called presplit Floer complex in \cite{diogo2019symplectic}) does not require the monotonicity condition in \cite{diogo2019symplectic}.}.
	\end{enumerate}
The approach taken in this paper is a mixture of \eqref{2} and \eqref{3} for moduli spaces setup but with finite slope Hamiltonians as in \eqref{1}, and can also be viewed as a special case of \eqref{4} with a Hamiltonian of finite slope. The compactness of relevant moduli spaces follows from \cite[Proposition 2.6.]{zhou2019symplectic} and \cite[\S 4.2]{bourgeois2009symplectic}. The transversality essentially follows from somewhere injectivity of Floer cylinders, see \cite[Proposition 2.8.]{zhou2018quotient} and \cite[\S 4.1]{bourgeois2009symplectic}. One way to relate the cascades construction with the classical construction is through a gluing analysis for degeneration \cite{bourgeois2009symplectic}, another approach is via cascades continuation/Viterbo transfer maps used in \cite[\S 5]{diogo2019symplectic}. 
\end{remark}

\begin{remark}\label{rmk:CZ}
	To define a global Conley-Zehnder index, we need to choose a trivialization of the determinant line bundle $\det_{\C} \xi_{std}$. In our case, $c_1(\xi_{std})$ is not always $0$  in $H^2(\RP^{2n-1})$ for any $n$, therefore, we may not be able to trivialize $\det_{\C} \xi_{std}$ globally. In this case, we can assign a Conley-Zehnder index for each orbit $\gamma$ after fixing a trivialization of $\gamma^*\det_{\C} \xi_{std}$. The parity of the Conley-Zehnder index does not depend on the trivialization. One way to get a natural trivialization of $\det_{\C} \xi_{std}$ is by choosing a bounding disk $u$ of $\gamma$ either in $\RP^{2n-1}$ or some symplectic filling $W$ of $\RP^{2n-1}$. Since $u^*\det_{\C} \xi_{std}/u^*\det_{\C}TW$ is uniquely trivialized, it induces a trivialization of $\gamma^*\det_{\C} \xi_{std}$. When using Conley-Zehnder index from bounding disks to compute virtual dimensions of moduli spaces, it is crucial to check the bounding disks are compatible via gluing. The Conley-Zehnder index of Hamiltonian orbits has the same property. By computing the indexes in the standard filling $\cO(-2)$, we have that all check generators $\check{\gamma}$ have odd grading and all hat generators $\hat{\gamma}$ have even grading.
	
	As another important ingredient to our proof, any holomorphic curve in the symplectization $\RP^{2n-1}\times \R_+$ has a well-defined index depending only on its asymptotics.  This is because $c_1(\xi_{std})$ is torsion (which is called numerically $\Q$-Gorenstein in the context of singularity theory \cite{mclean2016reeb}). When we use the  trivialization induced by the obvious disk bounded by $\gamma_i$ and $\gamma_i^2$ in $\cO(-2)$, then the SFT grading is given by\footnote{The index is computed in a similar way to \cite[Theorem 6.3]{zhou2019symplectic}. One way to explain is writing $\mu_{CZ}(\gamma^j_i)=(2i-n+1)+2j$, then $2i-n+1=\ind q_i -\frac{1}{2}\dim \CP^{n-1}$ is the Conley-Zehnder index comes from the Hamiltonian of $\frac{1}{1+\pi^*f}$ and $2j$ is the Conley-Zehnder index comes from wrapping around the disk $j$ times.}
	\begin{equation}\label{eqn:CZ}
	\mu_{CZ}(\gamma_i)+n-3=2i,\quad \mu_{CZ}(\gamma^2_i)+n-3=2i+2.
	\end{equation}
	As an example, we compare indexes from different trivializations as follows, the disk in $\cO(-2)$ bounded by $\gamma^2_0$ differs from the contraction of $\gamma^2_0$ in $\RP^{2n-1}$ by a generator $A$ of $H_2(\cO(-2))$. Therefore if we use such trivialization induced by the contraction, we have $\mu_{CZ}(\gamma^2_0)+n-3=2c_1(A)+2=2n-2$, i.e.\ $\mu_{CZ}(\gamma^2_0)=n+1$ which is same as the Conley-Zehnder index of the shortest Reeb orbits on an ellipsoid.
\end{remark}
\begin{remark}\label{rmk:CZ2}
	To elaborate the compatibility of trivializations, if we consider curve $u$ in the symplectization $\RP^{2n-1}\times \R_+$ with one positive puncture asymptotic to $\gamma_i^2$ and two negative punctures asymptotic to $\gamma_j,\gamma_k$, then the trivialization from the bounding disks in $\cO(-2)$ are compatible. This is because: if we glue the bounding disks of $\gamma_k,\gamma_i$ to $u$, we get a disk which relatively homotopic to the bounding disk of $\gamma_i^2$ in $\cO(-2)$. One can see this by checking the intersection number with $\CP^{n-1}$. On the other hand, if we change the positive asymptotics to $\gamma_i^4$, then the natural bounding disks from $\cO(-2)$ are no longer compatible with a difference from the generator of $H_2(\CP^{n-1})$.       
\end{remark}

Moreover, inside $\cH_a$, we have a partial order given by increasing homotopies. Every increasing homotopy induces a continuation map, which also preserves the splitting into $C_0$ and $C_+$. Therefore we can define the filtered symplectic cohomology as follows,
$$SH^{*,\le a}(W;\R) = \varinjlim_{H\in \cH_a} H^*(C(H)), \qquad SH^{*,\le a}_+(W;\R) = \varinjlim_{H\in \cH_a} H^*(C_+(H)).$$
And we have a tautological long exact sequence (or circle, since they are only $\Z_2$ graded in general),
\begin{equation}\label{eqn:exact}
\ldots \to H^*(W;\R)\to SH^{*,\le a}(W;\R) \to SH^{*,\le a}_+(W;\R) \to H^{*+1}(W;\R)\to \ldots.
\end{equation}
The continuation maps also gives $\iota_{a,b}:SH^{\le a}(W;\R) \to SH^{\le b}(W;\R)$ for $a\le b$, similarly for the positive symplectic cohomology. They are compatible with tautological long exact sequence. To avoid using direct limit, we will use the following.

\begin{proposition}\label{prop:iso}
	For $H\in \cH_a$, the natural morphisms $H^*(C(H))\to SH^{*,\le a}(W;\R)$ and $H^*(C_+(H))\to SH^{*,\le a}_+(W;\R)$ are both isomorphism.
\end{proposition}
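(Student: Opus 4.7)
The plan is to show that for any two $H_1,H_2\in\cH_a$ with $H_1\le H_2$, the continuation chain map $C(H_1)\to C(H_2)$ is a quasi-isomorphism, and likewise for the quotient complexes $C_+$. Since $\cH_a$ is directed under $\le$ and the tautological sequence \eqref{eqn:exact} is compatible with continuation, the proposition then follows, because a direct limit of a system of isomorphisms agrees with any term of the system.

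The key observation is that $H_1$ and $H_2$ have the same slope $a$, and since $a$ is not the period of any Reeb orbit of $\alpha_f$, the non-constant $1$-periodic orbits of the two Hamiltonians are in canonical bijection: each Reeb orbit $\gamma$ with period $T<a$ contributes one $S^1$-family of orbits at the unique level $r_i$ on the cylindrical end where $h_i'(r_i)=T$, yielding the same pair of perturbed generators $\hat\gamma,\check\gamma$ in both $C(H_1)$ and $C(H_2)$. Along any monotone homotopy $H_s\in\cH_a$ interpolating the two, the integrated maximum principle of \cite{abouzaid2010open} (see also \cite[Lemma 2.5]{zhou2019symplectic}) confines continuation cylinders to a compact subset of $\widehat W$, so the continuation moduli spaces are compact and the continuation map is well-defined.

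To prove the isomorphism, I would order the generators by the period of their underlying Reeb orbit, placing the Morse critical points of $g$ at the bottom. The action-energy relation for Floer continuation, together with $\partial_s H_s\ge 0$, forces any continuation cascade from a generator over $\gamma$ of period $T$ to land at a generator over some orbit of period at most $T$; hence the continuation chain map is block upper triangular with respect to this ordering. On $C_0$ it restricts to the identity on the Morse complex of $g$. On each diagonal block corresponding to a fixed $\gamma$, the contribution comes from the constant continuation cylinder in the $\R$-family obtained by translating the $S^1$-family of $\gamma$-orbits, coupled with the auxiliary Morse function $g_{\overline\gamma}$; a standard gluing computation identifies this block with the identity on $\{\hat\gamma,\check\gamma\}$. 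Thus the continuation is upper triangular with identity diagonal blocks, hence a chain isomorphism, which also descends to the $C_+$ quotient since it preserves the subcomplex $C_0$.

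The main obstacle is the diagonal computation: because the $S^1$-families are not isolated, identifying each diagonal block with the identity requires a Morse--Bott style analysis of continuation solutions that remain in a neighborhood of the $\gamma$-family throughout the homotopy. This is a routine adaptation of the cascades framework of \cite[\S 4.2]{bourgeois2009symplectic} and the cascades continuation of \cite[\S 5]{diogo2019symplectic}; once in place, compatibility with the $C_0/C_+$ decomposition is automatic since the action filtration and the splitting by orbit type are both preserved by continuation.
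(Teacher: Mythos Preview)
Your overall strategy---reduce to showing that the continuation map between any $H_1\le H_2$ in $\cH_a$ is a quasi-isomorphism---is the same as the paper's. The gap is in your execution. The claim that the continuation map is block upper triangular with respect to the period filtration does \emph{not} follow from the action--energy relation. For a monotone homotopy the energy inequality gives $\cA_{H_2}(\gamma_-)\ge\cA_{H_1}(\gamma_+)$, but these actions are computed with \emph{different} Hamiltonians. Writing $\cA_i(T)=\min_r\bigl(h_i(r)-rT\bigr)$, the hypothesis $h_1\le h_2$ only yields $\cA_1(T)\le\cA_2(T)$ pointwise; it does not prevent $\cA_2(T_-)\ge\cA_1(T_+)$ with $T_->T_+$. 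Concretely, take $H_1$ of large width and $H_2$ of small width (still $H_1\le H_2$): the $H_1$-orbits sit at large $r$ with very negative action while the $H_2$-orbits sit near $r=1$, so a period-$T_+$ input can flow to a period-$T_-$ output with $T_->T_+$. Thus the triangular structure is unjustified, and the ``identity on diagonal blocks'' computation you flag as the main obstacle never gets off the ground. (Even in the diagonal case there is no constant continuation cylinder, since the $\overline{\gamma}$-families for $H_1$ and $H_2$ live at different $r$-levels.)

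The paper avoids any such analysis with a shift trick. Given $H\le H'$ in $\cH_a$, pick $c>0$ with $H'\le H+c$. The complex $C(H+c)$ has literally the same orbits as $C(H)$ and the continuation $C(H)\to C(H+c)$ is the identity; the factorization $C(H)\to C(H')\to C(H+c)$ then shows $H^*(C(H))\to H^*(C(H'))$ is injective and the second arrow is surjective on cohomology. Repeating the argument with $H'\le H+c\le H'+c$ shows the second arrow is also injective, hence both are isomorphisms. The $C_+$ statement follows from the five lemma applied to the tautological long exact sequence, since on $C_0$ the continuation is the identity (same Morse function $g$). This is purely formal once one knows that adding a constant to the Hamiltonian does not change the complex, and it bypasses any Morse--Bott continuation analysis.
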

\begin{proof}
We will prove that if $H\le H'\in \cH_a$, then the continuation map induces isomorphism $H^*(C(H))\simeq H^*(C(H'))$ and $H^*(C_+(H))\simeq H^*(C_+(H'))$. Note that there exists a positive number $c$ such that $H'\le H+c$. Although $H+c$ is no longer admissible in our sense, the Hamiltonian-Floer cohomology can still be defined and there is a continuation map $C(H')\to C(H+c)$. Moreover the composition $C(H)\to C(H')\to C(H+c)$ is homotopic to the continuation map from $H$ to $H+c$, which is identity. In particular, $H^*(C(H))\to H^*(C(H'))$ is injective and $H^*(C(H'))\to H^*(C(H+c))$ is surjective. We can apply the same argument to the composition $C(H')\to C(H+c) \to C(H'+c)$ to conclude that $H^*(C(H'))\to H^*(C(H+c))$ is injective. Therefore both $H^*(C(H'))\to H^*(C(H+c)$ and $H^*(C(H))\to H^*(C(H'))$ are isomorphism. Since $H^*(C_0(H))\to H^*(C_0(H'))$ is an isomorphism if we use the same Morse function $g$ on $W$, the five lemma implies that $H^*(C_+(H)) \to H^*(C_+(H'))$ is also an isomorphism.
\end{proof}

Due to the fact that $\alpha_f$ is a small perturbation of the Morse-Bott contact form $\alpha_{std}$,  we can use the Morse-Bott spectral sequence \cite[(3.2)]{seidel2006biased} to estimate the filtered positive symplectic cohomology\footnote{The statement of Morse-Bott spectral sequence in \cite{seidel2006biased} requires the vanishing of the first Chern class, the absence of this condition will not effect the validity of the spectral sequence but will cost us the grading in \cite{seidel2006biased}. But we will not use the grading in this paper.}. In fact, by a compactness and gluing argument similar to \cite{bourgeois2009symplectic}, one can show that for $\epsilon$ sufficiently small, we have $d_+ \hat{\gamma}_{i}=2\hat{\gamma}_{i-1}$, which is from the Gysin sequence of the degree $2$ circle bundle $\RP^{2n-1}\to \CP^{n-1}$. To avoid the gluing analysis overhead, in the following, we give a weaker result that is sufficient for our purpose, which only uses compactness argument in the spirit of \cite[Lemma 2.1]{cieliebak1996applications} and the Viterbo transfer map.
\begin{proposition}\label{prop:MB}
	Assume there is an exact filling $W$ of $(\RP^{2n-1},\alpha_f)$. For $\epsilon$ sufficiently small, we have the following.
	\begin{enumerate}
	\item \label{MB0} $d_+\hat{\gamma}_i=a_i\check\gamma_{i-1}$ for $a_i\ne 0$.
	\item \label{MB1} For $2<a<2+2\epsilon$, we have $\Ima(SH_+^{*,\le a}(W;\R)\to H^{*+1}(W;\R))$ is at most rank $2$ and is supported in even degrees.
\end{enumerate}
\end{proposition}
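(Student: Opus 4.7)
My plan is to analyze $d_+$ in two stages: a local compactness argument for part (1), and a rank calculation for the cycles of $C_+(H)$ for part (2).

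For part (1), since $J_t$ is cylindrical convex near the action levels of $\gamma_i$, the integrated maximum principle of \cite{abouzaid2010open} confines any Floer cascade between the generators in question to a neighborhood of $\partial W$. Thus, via a Viterbo transfer comparison, the count $a_i$ depends only on the germ of $\alpha_f$ and can be computed in the standard filling $\cO(-2)$. As $\epsilon \to 0$, the contact form $\alpha_f$ degenerates to the Morse--Bott form $\alpha_{std}$, whose simple-orbit family is parametrized by $\CP^{n-1}$. By an SFT-compactness argument in the spirit of \cite[Lemma 2.1]{cieliebak1996applications}, rigid Floer cascades from $\hat\gamma_i$ to $\check\gamma_j$ limit to holomorphic buildings in the symplectization of $(\RP^{2n-1},\alpha_{std})$ whose components are forced to be trivial cylinders, since any nontrivial rigid curve between orbits in the same Morse--Bott family would have positive Fredholm dimension. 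What remains is a gradient trajectory of $f$ from $q_i$ to $q_j$ together with a holonomy condition from the circle bundle $\RP^{2n-1}\to\CP^{n-1}$. For $j=i-1$ this count reduces to the signed degree of the Euler class $2u$ of $\RP^{2n-1}\to\CP^{n-1}$ evaluated on the relevant unstable cell, which is nonzero; for $j\ne i-1$, the action window, index parity, or perfectness of $f$ excludes rigid configurations. This gives $d_+\hat\gamma_i = a_i\check\gamma_{i-1}$ with $a_i\ne 0$.

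For part (2), I first extend the analysis of (1) to show $d_+\check\gamma_i = 0$ (no target of opposite parity, lower action, and nonnegative expected dimension), $d_+\hat\gamma_0 = 0$ (no lower-action simple-orbit target available), and $d_+\hat\gamma_0^2 = d_+\check\gamma_0^2 = 0$ within the double-cover family; cross-multiplicity cylinders from $\gamma_0^2$ to a simple orbit $\gamma_j$ are ruled out by the topological obstruction to lifting to $S^{2n-1}$, since the lift of such a cylinder would have a non-closed boundary arc at the negative end. Hence $H^*(C_+(H))$ at filtration $a\in(2,2+2\epsilon)$ is supported on classes represented by $\hat\gamma_0,\check\gamma_{n-1},\hat\gamma_0^2,\check\gamma_0^2$, of total rank at most $4$. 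The parity structure then refines the image bound: $\check\gamma_{n-1}$ and $\check\gamma_0^2$ are odd-graded and map into $H^{\text{even}}(W;\R)$, contributing at most rank $2$; while the even-graded $\hat\gamma_0$ and $\hat\gamma_0^2$ map into $H^{\text{odd}}(W;\R)$, and I would argue their images vanish, either because these classes are boundaries in the larger complex $C(H)$ via cross-filtration differentials coming from the ``unit-killer'' holomorphic curve alluded to in the introduction, or by a direct action/index vanishing of the cascade moduli spaces defining $d_{+,0}\hat\gamma_0$ and $d_{+,0}\hat\gamma_0^2$ for a suitably chosen auxiliary Morse function $g$ on $W$.

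The main obstacle is the nonvanishing assertion $a_i\ne 0$ in part (1). The author explicitly wants to avoid the gluing analysis underlying the Bourgeois--Oancea Morse--Bott spectral sequence, so the challenge is to extract a Euler-class-type invariant purely from the compactness side of the argument and identify it with the signed count of limiting configurations. A secondary subtlety in part (2) is the ``even degrees'' refinement, which requires a careful parity/action analysis of the cascade moduli spaces $\cM_{\hat\gamma_0,q}$ and $\cM_{\hat\gamma_0^2,q}$ contributing to the connecting map, together with the cross-filtration interaction between the simple-orbit and double-cover subcomplexes.
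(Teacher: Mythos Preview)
Your approach to part~(1) is precisely the gluing/degeneration argument the paper sets out to avoid. You correctly identify this as the main obstacle, but you do not resolve it: compactness tells you that rigid cascades limit to trivial cylinders plus Morse data, but it does not tell you that every such limiting configuration arises as a limit with the expected multiplicity. The paper's route is different and avoids computing $a_i$ altogether. It sandwiches $(\RP^{2n-1},\alpha_f)$ between $(\RP^{2n-1},(1\pm\delta)\alpha_{std})$, and uses the functoriality of Viterbo transfer (together with an appendix result showing the transfer is an isomorphism when no Reeb periods lie in the relevant window) to prove that $SH_+^{*,\le a}(W;\R)\to SH_+^{*,\le a}(W^{std}_{1-\delta};\R)$ is an isomorphism for suitable $a$. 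The target is computed directly as $H^*(\RP^{2n-1};\R)\cong\R^2$, supported in the gradings of $\check\gamma_0$ and $\hat\gamma_{n-1}$. Since the source complex has one generator in each local $\Z$-grading, the only way its cohomology can be $\R^2$ in those two gradings is if every $a_i$ is nonzero. No gluing is needed.

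Your approach to part~(2) contains a genuine error. The claim that cross-multiplicity cylinders from $\overline\gamma_0^2$ to a simple orbit $\overline\gamma_j$ are ruled out by lifting to $S^{2n-1}$ is false: these Floer cylinders live in $\widehat{W}$, not in the symplectization, and since $W$ is an \emph{unknown} exact filling, the simple orbits $\gamma_j$ may well be contractible in $W$. Indeed, later in the paper (Proposition~\ref{prop:vanish}) the author explicitly allows $d_+\check\gamma_0^2$ to have a nonzero $\hat\gamma_0$-component and corrects for it. So your computation of $H^*(C_+(H))$ is not justified. The paper instead factors the connecting map through $SH_+^{*,\le b}$ for a larger $b$ that captures all the $\gamma_i^2$. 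A second Viterbo-transfer argument identifies the relevant action-window cohomology with $H^*(\RP^{2n-1};\R)$, and a diagram chase shows the image of $SH_+^{*,\le a}\to SH_+^{*,\le b}$ has rank at most~$3$. The remaining hat-class $\hat\gamma_{n-1}$ is then killed not by your proposed mechanisms but by $S^1$-equivariant transversality: since $\gamma_{n-1}$ is simple, any cascade from $\hat\gamma_{n-1}$ to a critical point of $g$ carries a free $S^1$-action, hence is never rigid. This leaves only the two check-classes $\check\gamma_0,\check\gamma_0^2$, which are odd-graded and land in even-degree cohomology.
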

\begin{proof}
	For $\epsilon$ sufficiently small, we can find a small $\delta$ such that $\alpha_f<(1+\delta)\alpha_{std}$, i.e.\ there exists a function $h>1$ on $\RP^{2n-1}$, such that $(1+\delta)\alpha_{std}=h\alpha_f$. Then we have two exact (trivial) cobordisms $X_1$ from $(\RP^{2n-1},(1-\delta)\alpha_{std})$ to $(\RP^{2n-1},\alpha_f)$ and $X_2$ from $(\RP^{2n-1},\alpha_f)$ to $(\RP^{2n-1},(1+\delta)\alpha_{std})$. Then we have two Viterbo transfer maps from $SH^{*,\le a}_+(W;\R) \to SH^{*,\le a}_+(W^{std}_{1-\delta};\R)$ and $SH^{*,\le a}_+(W^{std}_{1+\delta};\R) \to SH^{*,\le a}_+(W;\R)$, c.f. \cite[Definition 5.2]{cieliebak2018symplectic}\footnote{Note that we need to adapt a cascades construction for the Viterbo transfer in the sense of \eqref{4} of Remark \ref{rmk:SH}, since $\alpha_{std}$ is Morse-Bott. This is just a cascades continuation map for some special Hamiltonians.}, where $W^{std}_{1\pm \delta}$ is the exact filling of $(\RP^{2n-1},(1\pm \delta)\alpha_{std})$ modified from $W$. The composition is the Viterbo transfer maps $SH^{*,\le a}_+(W^{std}_{1+\delta})\to SH^{*,\le a}_+(W^{std}_{1-\delta})$ by the functorial property of Viterbo transfer maps \cite[Proposition 5.4]{cieliebak2018symplectic}. Then for $(1+\delta)<a<2-2\delta$, we have the  $SH^{*,\le a}_+(W^{std}_{1+\delta})\to SH^{*,\le a}_+(W^{std}_{1-\delta})$ is an isomorphism by Proposition \ref{prop:Viterbo'} in the appendix. If we use $W_{\frac{1+\delta}{1-\delta}}$ to denote $W\cup \partial W\times (1,\frac{1+\delta}{1-\delta}]$, then for $\frac{1+\delta}{1-\delta}(1+(n-1)\epsilon)<a<2$, we have the Viterbo transfer $SH^{*,\le a}_+(W_{\frac{1+\delta}{1-\delta}})\to SH^{*,\le a}_+(W)$ is an isomorphism by Proposition \ref{prop:Viterbo'}. That is the two compositions in the following are both isomorphism,
	$$SH^{*,\le a}_+(W_{\frac{1+\delta}{1-\delta}})\to SH^{*,\le a}_+(W^{std}_{1+\delta})\to SH^{*,\le a}_+(W)\to SH^{*,\le a}_+(W^{std}_{1-\delta}).$$
	As a consequence, we have $SH^{*,\le a}_+(W)\to SH^{*,\le a}_+(W^{std}_{1-\delta})$ is an isomorphism. 
	
	By the same argument of \cite[Lemma 2.1]{cieliebak1996applications}, we have that all Floer trajectories as well as the continuation trajectories in the Viterbo transfer maps are contained in a tubular neighborhood of the boundary $\partial W$ (containing $\partial W^{std}_{1-\delta},\partial W^{std}_{1+\delta}$) for $\epsilon,\delta$ small enough\footnote{Since for $\epsilon=\delta=0$, all Floer trajectories/continuation trajectories become trivial cylinder on $(\RP^{2n-1},\alpha_{std})$. Note that this only uses compactness of trajectories.}. Since all curves are contained in this neighborhood, we can assign a ``local $\Z$ grading" as $c_1(\xi_{std})$ is torsion, although $c_1(W)$ may not be torsion.  Note that all of generators are in the same homotopy class, after fixing a trivialization of $\gamma_i^*\det_{\C}\xi_{std}$ and using that $c_1(\xi_{std})$ is torsion, we can assign a $\Z$-grading $|\gamma|=n-\mu_{CZ}(\gamma)$ to every generator. Moreover, we have $|\check{\gamma}_0|-|\check{\gamma}_i|=2i$ and $|\check{\gamma}_0|-|\hat{\gamma}_i|=2i+1$, e.g.\ we can use the one from Remark \ref{rmk:CZ}. The Viterbo transfer preserves this local grading as the trajectories in the Viterbo transfer are contained in this tubular neighborhood, and $SH_+^{*,\le a}(W^{std}_{1-\delta};\R)$ is the real cohomology of the critical submanifold $\RP^{2n-1}$ of parameterized simple Reeb orbits, which is supported in grading $|\check{\gamma}_0|$ and $|\hat{\gamma}_{n-1}|$. In view of the isomorphism $SH_+^{*,\le a}(W;\R)\to SH_+^{*,\le a}(W^{std}_{1-\delta};\R)$ and the grading, we must have  $d_+\hat{\gamma}_i=a_i\check\gamma_{i-1}$ for $a_i\ne 0$. If we use $\Z$-coefficient, we can conclude that $a_i=\pm 2$.
	
	In the following, we use $SH^{*,(a,b]}_+(W;\R)$ to denote the cohomology of the quotient complex generated by generators corresponding to Reeb orbits of period in $(a,b]$. Then for $2 < a < 2+2\epsilon$,  $\frac{1+\delta}{1-\delta}(2+(2n-2)\epsilon) <b<3-3\delta$ and $\frac{1+\delta}{1-\delta}(1+(n-1)\epsilon)<c<2-2\delta$, we have the following commutative diagram of long exact sequences 
	$$
	\xymatrix{
		\ldots \ar[r] & SH_+^{*,\le c}(W;\R) \ar[r]\ar[d]^{=} & SH_+^{*,\le a}(W;\R) \ar[r] \ar[d] & SH_+^{*,(c,a]}(W;\R)\ar[r]\ar[d] & \ldots \\
		\ldots \ar[r] & SH_+^{*,\le c}(W;\R) \ar[r] & SH_+^{*,\le b}(W;\R) \ar[r] & SH_+^{*,(c,b]}(W;\R)\ar[r] & \ldots }
	$$
	It is clear that $SH_+^{*,(c,a]}(W;\R)$ is the local Floer cohomology of $\overline{\gamma}_0^2$, which is generated by $\check{\gamma}_0^2,\hat{\gamma}_0^2$. By the same argument as before we have $SH_+^{*,(c,b]}(W;\R)$ is isomorphic to $H^*(\RP^{2n-1};\R)$ and generated by $\check{\gamma}_0^2, \hat{\gamma}_{n-1}^{2}$, here $\RP^{2n-1}$ is the critical submanifold in the free loop space parameterizing the space of parameterized Reeb orbits of multiplicity two of $\alpha_{std}$. In particular, the image of  $SH_+^{*,(c,a]}(W;\R)\to SH_+^{*,(c,b]}(W;\R) $ is rank one and generated by $\check{\gamma}_0^2$. As a consequence of the long exact sequence above, we have that the image of $SH_+^{*,\le a}(W;\R)\to SH_+^{*,\le b}(W;\R)$ is at most rank $3$. Since $SH^{*,\le a}_+(W;\R)\to H^{*+1}(W;\R)$ factors through $SH^{*,\le b}_+(W;\R)$, the image has rank at most $3$. On the other hand, the class represented by $\hat{\gamma}_{n-1}$ can not map to any nontrivial class in $H^*(W;\R)$, since we have $S^1$ equivariant transversality for this simple orbit, i.e.\ the Floer cylinder with positive end asymptotic to $\overline{\gamma}_{i}$ has a $S^1$-symmetry. As a result, the configuration from a hat generator $\hat{\gamma}_{n-1}$ to a critical point generator is never rigid, since we can always rotate a little to get another solution. Therefore the image of  $SH^{*,\le a}_+(W;\R)\to H^{*+1}(W;\R)$ is at most rank $2$. The last claim follows from that $\check{\gamma}_0,\check{\gamma}^2_0$ have odd degree in $C_+(H)$. 
\end{proof}

The last piece of structures we need is the pair of pants product, e.g.\ see \cite{irie2014hofer}, which is a map on the filtered symplectic cohomology,
$$\cup: SH^{\le a}(W;\R) \otimes SH^{\le b}(W;\R) \to SH^{\le a+b}(W;\R).$$
And we have the following.
\begin{proposition}\label{prop:kill}
	Let $W$ be an exact domain and $A \in \oplus_{i > 0} H^{2i}(W;\R)$. If $1+A$ is mapped to zero in $\iota_{0,a}:H^*(W;\R) \to SH^{*,\le a}(W;\R)$, then $H^*(W;\R) \to SH^{*,\le a}(W;\R)$ is zero and connecting map $SH^{*,\le a}_{+}(W;\R) \to H^{*+1}(W;\R)$ in the tautological long exact sequence is surjective\footnote{The proposition holds for $A$ with positive degree, the emphasis on even degree only makes $1+A$ degree $0$ in the $\Z_2$ grading, which we have on $SH^*(W;\R)$.}.
\end{proposition}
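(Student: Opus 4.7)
The plan is to exploit the module structure of $SH^{*,\le a}(W;\R)$ over $H^*(W;\R)$ induced by the pair of pants product, together with the observation that $1+A$ is a unit in $H^*(W;\R)$. More precisely, since $A$ lies in positive (even) degree and $H^*(W;\R)$ is a finite-dimensional graded-commutative ring, $A$ is nilpotent, so $(1+A)^{-1}=1-A+A^2-\cdots$ is a well-defined element of $H^*(W;\R)$.

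Next I would set up the compatibility of the pair of pants product with the filtration. The map
$$\cup\colon SH^{*,\le 0}(W;\R)\otimes SH^{*,\le a}(W;\R)\longrightarrow SH^{*,\le a}(W;\R)$$
gives $SH^{*,\le a}(W;\R)$ the structure of a module over $SH^{*,\le 0}(W;\R)=H^*(W;\R)$, and by construction the continuation map $\iota_{0,a}$ is a module map over $H^*(W;\R)$ (equivalently, the square with the unfiltered cup product on top and the filtered product on the bottom commutes). In particular, for any $C\in H^*(W;\R)$ one has
$$\iota_{0,a}(C)=C\cup \iota_{0,a}(1).$$

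Applying this identity to $C=1+A$ yields $0=\iota_{0,a}(1+A)=(1+A)\cup \iota_{0,a}(1)$. Cupping with $(1+A)^{-1}\in H^*(W;\R)$ gives $\iota_{0,a}(1)=0$, and then $\iota_{0,a}(C)=C\cup\iota_{0,a}(1)=0$ for every $C\in H^*(W;\R)$. Thus $H^*(W;\R)\to SH^{*,\le a}(W;\R)$ is identically zero. Plugging this into the tautological long exact sequence \eqref{eqn:exact}, exactness at $H^{*+1}(W;\R)$ forces the connecting map $SH^{*,\le a}_+(W;\R)\to H^{*+1}(W;\R)$ to be surjective.

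The only substantive issue is verifying the compatibility of the pair of pants product with the continuation map $\iota_{0,a}$, so that the module-over-$H^*(W;\R)$ formalism really applies; this is standard and can be quoted from, e.g., the construction in \cite{irie2014hofer}. Everything else is a short algebraic manipulation once the invertibility of $1+A$ and the unit property of $\iota_{0,a}(1)$ are in hand.
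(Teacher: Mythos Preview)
Your proposal is correct and matches the paper's argument essentially line for line: both use that $A$ is nilpotent so $1+A$ is invertible, invoke the compatibility of the pair of pants product with $\iota_{0,a}$ (citing \cite{irie2014hofer}), and conclude via the tautological long exact sequence. The only cosmetic difference is that the paper writes the one-line computation $0=(x\cup(1+A)^{-1})\cup\iota_{0,a}(1+A)=\iota_{0,a}(x)$ directly for arbitrary $x$, whereas you first deduce $\iota_{0,a}(1)=0$ and then use the module identity $\iota_{0,a}(C)=C\cup\iota_{0,a}(1)$.
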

\begin{proof}
	We have the following commutative diagram, e.g.\ see \cite[Lemma 2.8]{irie2014hofer},
	$$
	\xymatrix{
	H^{*}(W;\R)\otimes H^{*}(W;\R) \ar[r]^{\Id \otimes \iota_{0,a}}\ar[d] & H^{*}(W;\R) \otimes SH^{\le a}(W;\R) \ar[r]^{\qquad \quad \cup} & SH^{\le a}(W;\R)\ar[d] \\
	H^{*}(W;\R)\otimes H^{*}(W;\R) \ar[r]^{\qquad \cup} & H^{*}(W;\R) \ar[r]^{\iota_{0,a}} & SH^{\le a}(W;\R)}
	$$
	Since $A$ is nilpotent in $H^*(W;\R)$, we have that $1+A$ is a unit in $H^*(W;\R)$. Then for any $x\in H^*(W;\R)$, we have $0=(x\cup (1+A)^{-1})\cup \iota_{0,a}(1+A)=\iota_{0,a}(x)$, i.e.\ $H^{*}(W;\R)\to SH^{*,\le a}(W;\R)$ is zero. Then by the tautological long exact sequence, we have $SH^{*,\le a}_+(W;\R) \to H^{*+1}(W;\R)$ is surjective.
\end{proof}

\subsection{Vanishing of symplectic cohomology}
The key ingredient in our proof is that $\check{\gamma}^2_0$ up to certain error kills the symplectic cohomology as it does for the double cover whenever $n\ge 3$. If this was proven, Proposition \ref{prop:kill} can be used to estimate the rank of cohomology of the filling by filtered positive symplectic cohomology. To show that  $\check{\gamma}^2_0$ essentially kills the unit, we will study the map $SH^{*,\le 2+\epsilon}_+(W;\R)\stackrel{d_{+,0}}{\longrightarrow} H^{*+1}(W;\R) \stackrel{\text{projection}}{\longrightarrow} H^{0}(W;\R)$, in particular, we are interested to understand if $1$ is in the image, i.e.\ we are interested in the contribution  $d_{+,0}(\check{\gamma}^2_0)$ to the minimum of $g$. The choice of action threshold is to include $\check{\gamma}^2_0$ in the cochain complexes but nothing else with larger periods. 

By our setup of symplectic cohomology, one part of this contribution is the count of the following moduli space (i.e.\ $1$ level cascades).
\begin{equation}\label{eqn:disk}
\cM(\check{\gamma}^2_0, q):=\{u:\C \to \widehat{W}|\partial_s u+J_t(\partial_t u-X_H)=0, \lim_{s\to \infty} u(s,0)=\check{\gamma}^2_0, u(0)=q\}/\R,\end{equation}
where $q$ is a fixed point inside $W$, which is the unique minimum of the Morse function $g$ and $(s,t)$ is the polar coordinate on $\C^*$ by $(s,t)\mapsto e^{2\pi(s+it)}$. Since $q$ is the minimum and $\check{\gamma}^2_0$ is a check generator, both the Morse flow lines degenerate to point constraints. Since $\check{\gamma}^2_0$ is on $\Ima \overline{\gamma}^2_0$, we will call the constraint from $\check{\gamma}^2_0$ an orbit point constraint to differentiate it from the point constraint from $q$. We can choose $q$ to be arbitrarily close to $\partial W$. We will perform a neck stretching along $Y_1\subset W$, which is a slight push-in along the $-r$ direction and strictly contactomorphic to $(\RP^{2n-1}, (1-\delta)\alpha_f)$ for $\delta$ small. We use $X$ to denote the cobordism from $(\RP^{2n-1}, (1-\delta)\alpha_f)$ to $(\RP^{2n-1}, \alpha_f)$. We may assume $q$ is an interior point of $X$. We use $\partial_+ X$ to denote the positive boundary and $\partial_- X$ to denote the negative boundary.

We first recall the setup of neck-stretching for general case following \cite[\S 3.2]{zhou2019symplectic}.  Let $(W,\lambda)$ be a exact domain and $(Y_1,\alpha:=\lambda|_{Y_1})$ be a contact type hypersurface inside $W$. The hyperplane divides $W$ into a cobordism $X$ union with a domain $W'$. Then we can find a small slice $(Y_1\times [1-\eta,1+\eta]_r,\rd(r\alpha))$ symplectomorphic to a neighborhood of $Y_1$ in $W$. Assume $J|_{Y_1\times [1-\eta,1+\eta]_{r}}=J_0$, where $J_0$ is independent of $S^1$ and $r$ and $J_0(r\partial_{r})=R_\alpha,J_0\xi=\xi$ for $\xi:=\ker \alpha$. Then we pick a family of diffeomorphism $\phi_R:[(1-\eta)e^{1-\frac{1}{R}}, (1+\eta)e^{\frac{1}{R}-1}]\to [1-\eta,1+\eta]$ for $R\in (0,1]$ such that $\phi_1=\Id$ and $\phi_R$ near the boundary is linear with slope $1$. Then the stretched almost complex structure $NS_{R}(J)$ is defined to be $J$ outside $Y_1\times [1-\eta,1+\eta]$ and is $(\phi_R\times \Id)_*J_0$ on $Y_1\times [1-\eta,1+\eta]$. Then $NS_{1}(J)=J$ and $NS_{0}(J)$ gives almost complex structures on the completions $\widehat{X}$, $\widehat{W'}$ and $Y_1\times \R_+$, which we will refer as the fully stretched almost complex structure.

We will consider the degeneration of curves solving the Floer equation with one positive cylindrical end asymptotic to a non-constant Hamiltonian orbit of $X_H$. Since either the orbit is simple or $J$ depends on the $S^1$ coordinate near non-simple orbits, the topmost curve in the SFT building, i.e.\ the curve in $\widehat{X}$, has the somewhere injectivity property. In particular, we can find regular $J_t$ on $\widehat{X}$ such that all relevant moduli spaces, i.e.\ those with point constraint from $q$ (which is in $\widehat{X}$), or with negative cylindrical ends asymptotic to non-constant Hamiltonian orbits of $X_H$, possibly with negative punctures asymptotic to Reeb orbits of $Y_1$ and multiple cascades levels, are cut out transversely. We say a almost complex structure on $W$ is generic iff the fully stretched almost complex structure $NS_0(J)$ is regular on $\widehat{X}$. The set of generic almost complex structures form an open dense subset \footnote{This is because there are only finitely many moduli spaces that can have positive energy.} in the set of compatible almost complex structures  that are cylindrical convex and $S^1$, $r$ independent on $Y_1\times [1-\eta,1+\eta]_r$. 

For the compactification of curves in the topmost SFT level, in addition to the usual SFT building in the symplectization $\partial_+X\times \R_+=Y_1\times \R_+$ stacked from below \cite{bourgeois2003compactness}, we also need to include Hamiltonian-Floer breakings near the cylindrical ends. In our context, since we use autonomous Hamiltonians and cascades,  we need to include curves with multiple cascades levels and their degeneration, e.g.\ $l_i=0,\infty$ in the cascades for some horizontal level $i$.  A generic configuration is described in the top-right of the figure below, but we could also have more cascades levels with the connecting Morse trajectories degenerate to $0$ length or broken Morse trajectories.
\begin{figure}[H]
	\begin{center}
		\begin{tikzpicture}[scale=0.5]
		\path [fill=blue!15] (0,0) to [out=20, in=160]  (6,0) to [out=270,in=90] (6,-6) to [out=170,in=10] (0,-6) to [out=90, in=270] (0,0);
		\path [fill=red!15] (0,-6) to [out=10, in=170]  (6,-6) to [out=270,in=0] (3,-10) to [out=180,in=270] (0,-6); 
		\draw (0,0) to [out=20, in=160]  (6,0) to [out=270,in=90] (6,-6) to [out=170,in=10] (0,-6) to [out=90, in=270] (0,0);
		\draw (6,-6) to [out=270,in=0] (3,-10) to [out=180,in=270] (0,-6); 
		\draw[dashed] (0, -5.5) to [out=10, in=170] (6,-5.5);
		\draw[dashed] (0.02,-6.5) to [out=10, in=170] (5.98,-6.5);
		\draw[->] (1,-1) to (1.5,-1);
		\draw (1.5,-1) to (2,-1);
		\draw (2,-1) to [out=90, in=180] (2.5, -0.75) to [out=0, in = 90] (3,-1) to [out=270,in=0] (2.5,-1.25) to [out=180,in=270] (2,-1);
		\draw (2,-1) to [out=270,in=90] (1,-7) to [out=270,in=180] (2,-8) to [out=0,in=180](2.5,-3) to [out=0, in=90](3,-4);
		\draw (3,-1) to [out=270, in=90] (4,-4) to [out=270, in=0] (3.5,-4.25) to [out=180,in=270] (3,-4);
		\draw[dotted] (3,-4) to [out=90, in=180] (3.5,-3.75) to [out=0, in=90] (4,-4);
		\draw[->] (4,-4) to (4.25,-4);
		\draw (4.25,-4) to (4.5,-4);
		\draw (4.5,-4) to [out=90, in=180] (5,-3.75)  to [out=0, in=90] (5.5,-4) to [out=270, in=0] (5,-7) to [out=180, in=270] (4.5,-4) to [out=270,in=180] (5,-4.25) to [out=0,in=270] (5.5,-4);
		\fill (5,-5) circle[radius=1pt];
		\node at (5.3,-5) {$q$};
		\node at (2.5,-2) {$u_1$};
		\node at (5,-4.5) {$u_2$};
		\end{tikzpicture}
		\hspace{1cm}
		\begin{tikzpicture}[xscale=0.5,yscale=0.7]
		\path [fill=blue!15] (0,-1) to [out=20, in=160]  (6,-1) to [out=270,in=90] (6,-6) to [out=170,in=10] (0,-6) to [out=90, in=270] (0,-1);
		\path [fill=red!15] (0,-6) to [out=10, in=170]  (6,-6) to [out=270,in=0] (3,-10) to [out=180,in=270] (0,-6); 
		\draw (0,-1) to [out=20, in=160]  (6,-1) to [out=270,in=90] (6,-6) to [out=170,in=10] (0,-6) to [out=90, in=270] (0,-1);
		\draw (6,-6) to [out=270,in=0] (3,-10) to [out=180,in=270] (0,-6);
		\draw[dashed] (0, -5) to [out=10, in=170] (6,-5);
		\draw[dashed] (0.1, -7) to [out=10, in=170] (5.9,-7);
		\draw[->] (1,-2) to (1.5,-2);
		\draw (1.5,-2) to (2,-2);
		\draw (2,-2) to [out=90, in=180] (2.5, -1.8) to [out=0, in = 90] (3,-2) to [out=270,in=0] (2.5,-2.2) to [out=180,in=270] (2,-2);
		\draw (2,-2) to [out=270,in=90] (1,-7) to [out=270,in=180] (2,-8) to [out=0,in=180](2.5,-3.5) to [out=0, in=90](3,-4);
		\draw (3,-2) to [out=270, in=90] (4,-4) to [out=270, in=0] (3.5,-4.2) to [out=180,in=270] (3,-4);
		\draw[dotted] (3,-4) to [out=90, in=180] (3.5,-3.8) to [out=0, in=90] (4,-4);
		\draw[->] (4,-4) to (4.25,-4);
		\draw (4.25,-4) to (4.5,-4);
		\draw (4.5,-4) to [out=90, in=180] (5,-3.8)  to [out=0, in=90] (5.5,-4) to [out=270, in=0] (5,-7) to [out=180, in=270] (4.5,-4) to [out=270,in=180] (5,-4.2) to [out=0,in=270] (5.5,-4);
		\fill (5,-4.7) circle[radius=1pt];
		\node at (5.3,-4.8) {$q$};
		\node at (2.5,-3) {$u_1$};
		\node at (5,-4.5) {$u_2$};
		\end{tikzpicture}
		\hspace{1cm}
		\begin{tikzpicture}[scale=0.5]
		\path [fill=blue!15] (0.5,-6) to [out=90, in=270]  (0,0) to [out=20,in=160] (6,0) to [out=270,in=90] (5.5,-6) to [out=160, in=20] (0.5,-6);
		\path [fill=purple!15] (0.5,-6.2) to [out=20,in=160] (5.5,-6.2) to [out=270,in=90] (5.5,-12) to [out=160, in=20] (0.5,-12) to [out=90, in=270] (0.5,-6.2);
		\path [fill=red!15] (0.5, -12.2) to [out=20,in=160] (5.5,-12.2) to [out=270,in=0] (3,-16) to [out=180,in=270] (0.5,-12.2);
		\draw (0.5,-6) to [out=90, in=270]  (0,0) to [out=20,in=160] (6,0) to [out=270,in=90] (5.5,-6);
		\draw [dashed] (5.5,-6) to [out=160, in=20] (0.5,-6);
		\draw[dashed] (0.5,-6.2) to [out=20,in=160] (5.5,-6.2);
		\draw (5.5,-6.2) to [out=270,in=90] (5.5,-12);
		\draw[dashed] (5.5,-12) to [out=160, in=20] (0.5,-12);
		\draw (0.5,-12) to [out=90, in=270] (0.5,-6.2);
		\draw [dashed](0.5, -12.2) to [out=20,in=160] (5.5,-12.2); 
		\draw (5.5,-12.2)to [out=270,in=0] (3,-16) to [out=180,in=270] (0.5,-12.2);
		\draw[->] (1,-1) to (1.5,-1);
		\draw (1.5,-1) to (2,-1);
		\draw (2,-1) to [out=90, in=180] (2.5, -0.75) to [out=0, in = 90] (3,-1) to [out=270,in=0] (2.5,-1.25) to [out=180,in=270] (2,-1);
		\draw (2,-1) to [out=270,in=90] (1,-4) to [out=270, in=180]  (1.5,-5.8) to [out=0,in=270](2,-5) to [out=90,in=180] (2.5,-3) to [out=0, in=90](3,-4);
		\draw (3,-1) to [out=270, in=90] (4,-4) to [out=270, in=0] (3.5,-4.25) to [out=180,in=270] (3,-4);
		\draw[dotted] (3,-4) to [out=90, in=180] (3.5,-3.75) to [out=0, in=90] (4,-4);
		\draw[->] (4,-4) to (4.25,-4);
		\draw (4.25,-4) to (4.5,-4);
		\draw (4.5,-4) to [out=90, in=180] (4.9,-3.75)  to [out=0, in=90] (5.3,-4) to [out=270, in=0] (4.9,-5.9) to [out=180, in=270] (4.5,-4) to [out=270,in=180] (4.9,-4.25) to [out=0,in=270] (5.3,-4);
		\fill (5,-5.2) circle[radius=1pt];
		\node at (5.1,-5.5) {$q$};
		\draw (1.5,-5.9) to [out=180,in=90] (0.8,-9) to [out=270, in=180] (1.5,-11.8) to [out=0,in=270] (2.2,-9) to [out=90, in=0] (1.5,-5.9);
		\draw (1.5,-11.9) to [out=180,in=90] (0.8,-13) to [out=270, in=180] (1.5,-14) to [out=0,in=270] (2.2,-13) to [out=90, in=0] (1.5,-11.9);
		\draw (5,-6) to [out=180,in=90] (4.6,-8) to [out=270, in=180] (5,-9) to [out=0,in=270] (5.3,-8) to [out=90, in=0] (5,-6);
		\node at (1.5,-5.8) [circle, fill=white, draw, outer sep=0pt, inner sep=3 pt] {};
		\node at (4.9,-5.9) [circle, fill=white, draw, outer sep=0pt, inner sep=3 pt] {};
		\node at (1.5,-11.8) [circle, fill=white, draw, outer sep=0pt, inner sep=3 pt] {};
		\node at (2.5,-2) {$u^{\infty}_1$};
		\node at (5,-4.7) {$u^{\infty}_2$};
		\node at (4.5,-2) {$\widehat{X}$};
		\node at (4,-10) {$Y\times \R_+$};
		\node at (4,-14) {$\widehat{W'}$};
		\end{tikzpicture}
	\end{center}
    \caption{Neck-stretching}
    \label{fig}
\end{figure}
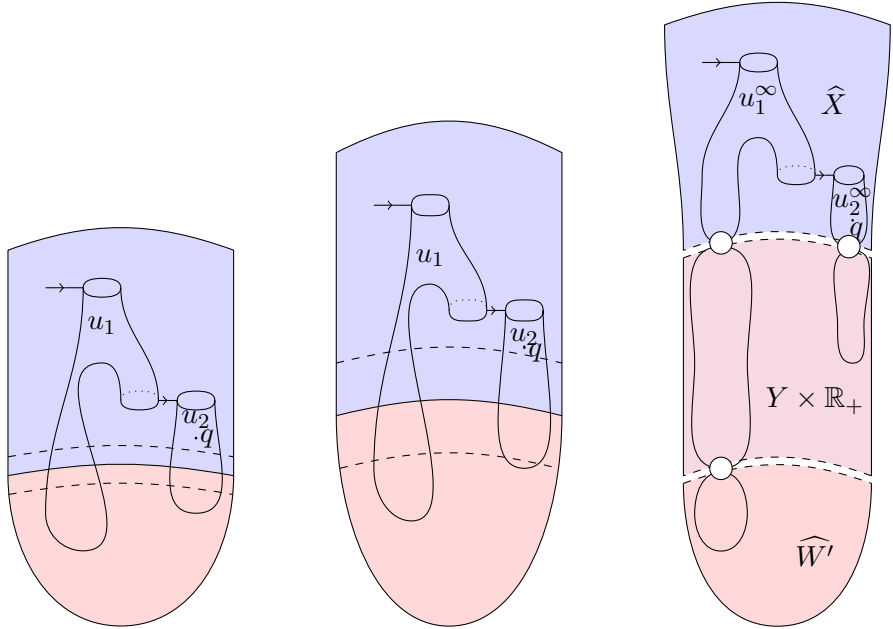
In the figure, we use $\bigcirc$ to indicate the puncture that is asymptotic to a Reeb orbit. We call $u_1$ is the topmost level of the cascades and all curves in $\widehat{X}$ in the fully stretched picture curves in the topmost (SFT building) level. We call $u_1^{\infty}$ the topmost cascades level in the topmost level. $u_2$ is the bottom cascades level and $u_2^\infty$ is the bottom cascades level in the topmost SFT level. 

The benefit of neck-stretching is two-fold. (1) After neck-stretching along $Y_1$ inside the hypothetical exact filling $W$ of $(\RP^{2n-1},\alpha_f)$, the virtual dimension of the topmost level can be computed using only their asymptotic orbits. This is because $c_1(\xi_{std})$ is torsion. While we can not do this in the filling, since $c_1(W)$ is not necessarily zero in $H^2(W;\Q)$, in particular, we need to keep track of the relative homology class of the curve. (2) In the topmost level, $\gamma^2_0,\gamma_0$ are in different homology classes, which may not be the case in the filling $W$. The price we pay is that we need to analyze more configurations.

In the fully stretched case, one particularly important moduli space is the following $1$-level cascades
\begin{equation}\label{eqn:SFT}
\cM^{\infty}(\check{\gamma}^2_0,q):=\{u:\C \to \widehat{X}|\partial_s u+J_t(\partial_t u-X_H)=0, \lim_{s\to \infty} u(s,0)=\check{\gamma}^2_0, u(0)=q\}/\R,
\end{equation}
which is closely related to \eqref{eqn:disk}, as we shall see in Proposition \ref{prop:unit}. In the following, we derive an action constraint. Let $\tilde{\omega}$ denote the smooth $2$-form on $\widehat{X}$, such that it is the symplectic form on $X$ and $\partial_+X\times (1,\infty)$ and $\rd(g\alpha_f)$ on $\partial_-X\times (0,1)$, where $g$ is a strictly increasing positive function on $(0,1)$ such that $\lim_{x\to 0}g(x)=(1-\delta)\eta$ and $\lim_{x\to 1} g=(1-\delta)$ for $\eta<1$. It is clear that we can find such $g$, so that $\tilde{\omega}$ is smooth and exact on $\widehat{X}$ with a smooth primitive $\tilde{\alpha}$ which is $r \alpha_f$ on $X\cup \partial_+X\times (1,\infty)$ and $g\alpha_f$ on $\partial X_-\times (0,1)$. Since $J_t$ is cylindrical convex on $\partial_-X \times (0,1)$, $J_t$ is also compatible with $\rd(g \alpha_f)$ on $\partial_-X\times (0,1)$. Then for any $u$ solving the Floer equation in \eqref{eqn:SFT} but possibly with negative punctures asymptotic to Reeb orbits on $\partial X_-$, we have $\tilde{\omega}(\partial_su,\partial_tu-X_H)\ge 0$. We use $\widetilde{X_H}$ to denote the Hamiltonian vector field of $H$ using $\widetilde{\omega}$. Since $H=0$ below  $\partial_-X$, we have $X_H=\widetilde{X_H}$. Then by Stokes' theorem, the integration of $\tilde{\omega}(\partial_su,\partial_tu-X_H)=\tilde{\omega}(\partial_su,\partial_tu-\widetilde{X_H})$ implies 	\begin{eqnarray*}
	0 & \le & \int_{\C\backslash\{z_1,\ldots, z_{|\Gamma|\}}} \tilde{\omega}(\partial_su,\partial_tu-\widetilde{X_H})\rd s \wedge \rd t \\
	& = & \int_{\C\backslash\{z_1,\ldots, z_{|\Gamma|\}}}\rd (u^*\tilde{\alpha})-\int_{\C}\partial_s u^*H  \rd s\wedge \rd t\\
	& = & 2r-\sum_{\gamma \in \Gamma }(1-\delta)\eta \int_{\gamma} \alpha_f-h(r)
\end{eqnarray*}
where $r$ is the unique value such that $h'(r)=2$ and $\Gamma$ is set of negative asymptotic orbits of $u$ viewed as the Reeb orbits of $\alpha_f$.\footnote{A priori, the negative puncture of $u$ is asymptotic to Reeb orbits of $(\RP^{2n-1},(1-\delta)\alpha_f)=Y_1=\partial_-X$. That is why we have $(1-\delta)$ in the expression when we view the Reeb orbits as in $(\RP^{2n-1},\alpha_f)$.} The first equality follows from that $u^*H$ is zero near punctures $z_i$. Let $\eta \to 1$, we have
\begin{equation}
\underbrace{2r-h(r)}_{\substack{\text{negative symplectic} \\ \text{action of } \overline{\gamma}^2_0}}-\underbrace{\sum_{\gamma \in \Gamma }(1-\delta)\int_{\gamma} \alpha_f}_{\text{contact action of }\Gamma}\ge 0,
\end{equation}
When the width of $H$ converges to zero, the unique value $r$ such that $h'(r)=2$ converges to $1$. In the meantime, $h(r)$ converges to $0$. Therefore if we choose $H$ to have arbitrarily small width and $\delta$ arbitrarily small, we have $2r-h(r)\to 2$ and $$\sum_{\gamma \in \Gamma} \int_{\gamma}\alpha_f < 2+\epsilon'=\int_{\gamma_0^2}\alpha_f+\epsilon',$$
for $\epsilon'>0$ sufficiently small. Let $\cC(\gamma)=\int_{\gamma}\alpha_f$ denote the contact action. In general, for a curve $u$ in $\widehat{X}$, possibly with multiple cascades levels, with topmost positive cylindrical end asymptotic to $\overline{\gamma}_+$ and bottom negative cylindrical end asymptotic to $\overline{\gamma}_-$ and a collection of negative punctures asymptotic to $\Gamma$, in addition to the usual symplectic action relation $\cA(\overline{\gamma}_+)<\cA(\overline{\gamma}_-)$, we also have
\begin{equation}\label{eqn:action}
\cC(\gamma_+)-\cC(\gamma_-)\ge \sum_{\gamma\in \Gamma}\cC(\gamma),
\end{equation}
for suitable choice of $H$ and $\delta$  by the same argument as above.
Of course, the choice of $H$ and $\delta$ depends on $\overline{\gamma}_+,\overline{\gamma}_-$. In the case of $H\in \cH_a$, there are finitely many families of orbits. In the following we fix $H$ and $\delta$ such that \eqref{eqn:action} holds for any $\overline{\gamma}_+,\overline{\gamma}_-$ in our setup of symplectic cohomology and neck-stretching. We refer to the $\cC(\gamma_+)-\cC(\gamma_-)- \sum_{\gamma\in \Gamma}\cC(\gamma)$ as the contact energy, which is non-negative. 

In the following, we first state a key property for the double cover $(S^{2n-1},\xi_{std})$, which will supply us with the holomorphic curve we need for $(\RP^{2n-1},\xi_{std})$ for Proposition \ref{prop:unit}. The following result follows from a tailored proof of \cite[Theorem A]{zhou2019symplectic}.
\begin{proposition}\label{prop:curve}
	Let $(S^{2n-1},\alpha)$ be the standard contact sphere with a non-degenerate ellipsoid contact form that is close to a round sphere with $n\ge 2$. Then for any small enough positive number $\delta$. Let $X'$ denote the (trivial) symplectic cobordism from $(S^{2n-1},(1-\delta)\alpha)$ to $(S^{2n-1},\alpha)$. Let $q$ be a interior point in $X'$, let $H$ be an admissible Hamiltonian on the completion $\widehat{X'}$ in the sense of last subsection, in particular, $H$ is zero below $(S^{2n-1},\alpha)$ and is linear on the positive end of $\widehat{X'}$. Let $\gamma$ be the Reeb orbit with minimal period of $\alpha$, we define
	\begin{equation}\label{eqn:sphere}
	\cM^{\infty}(\check{\gamma},q):=\{u: \C \to \widehat{X'}|\partial_s u + J_t(\partial_tu-X_H)=0,\lim_{s\to\infty}u(s,0)=\check{\gamma},u(0)=q\}/\R.
	\end{equation}
	We say $J_t$ is nice, if every curve in $\cM^{\infty}(\check{\gamma},q)$ is cut out transversely and there is no curve in form of those in $\cM^{\infty}(\check{\gamma},q)$ with one extra negative puncture asymptotic to a simple Reeb orbit. Then the set of nice $J_t$ is not empty, and for any nice $J_t$, $\cM^{\infty}(\check{\gamma},q)$ is compact and the algebraic count is $1$ after we choose an appropriate orientation of the determinant line bundle associated to $\overline{\gamma}$. 
\end{proposition}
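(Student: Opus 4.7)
The plan is to specialize the argument from the proof of \cite[Theorem A]{zhou2019symplectic} to the trivial cobordism $X'$, where the underlying count admits an explicit model via the standard ball filling $(\B^{2n},\omega_{std})$ of $(S^{2n-1},\alpha)$. I would proceed in three steps: establishing niceness, producing the count, and verifying compactness.

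For niceness, I would argue that since $\gamma$ is simple, a generic time-dependent $J_t$ makes every element of $\cM^{\infty}(\check{\gamma},q)$ somewhere injective (viewed as a map from $\C$) and hence transversely cut out; the free point constraint at $q$ poses no additional obstruction. To forbid configurations with one extra negative puncture asymptotic to a simple Reeb orbit $\gamma'$, I would use the SFT dimension formula: attaching such a puncture decreases the virtual dimension by $(n-3)+\mu_{CZ}(\gamma')$. In the non-degenerate ellipsoid close to the round form, $\gamma$ minimizes $\mu_{CZ}$ at $n+1$ among simple orbits, so any other simple $\gamma'$ satisfies $\mu_{CZ}(\gamma')\ge n+1$, making the shifted virtual dimension strictly negative for $n\ge 2$. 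Genericity of $J_t$ then rules out such configurations, so nice $J_t$ exist.

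For the count, I would deform to the round sphere limit and back. In the round $(\B^{2n},\omega_{std})$ with the integrable complex structure, pseudoholomorphic planes asymptotic to Hopf great circles are exactly portions of complex lines through the origin, and a generic $q$ lies on a unique such complex line; the check condition $\check{\gamma}$ further rigidifies the $S^1$-parameter of the asymptotic marker. This gives an explicit algebraic count of $\pm 1$ in the round model. Deformation invariance then preserves this count through the perturbation to the non-degenerate ellipsoid and the insertion of the compactly supported Hamiltonian term, since the virtual dimension stays zero throughout, and no new curves can appear by the action filtration. After fixing the orientation of $\det \overline{\gamma}$ appropriately, one obtains the count $1$.

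For compactness, the action estimate \eqref{eqn:action} is the main tool. Any SFT-type breaking at the negative end would yield a top piece with an extra negative asymptotic $\gamma'$ whose contact action is at most $\cC(\gamma)$; minimality of $\gamma$ then forces $\gamma'=\gamma$, which is precisely the configuration excluded by niceness. Cascades and Hamiltonian-Floer breaking near the positive cylindrical end would require intermediate Hamiltonian orbits coming from Reeb orbits of period strictly less than $\cC(\gamma)$, of which there are none. Exactness of $\widehat{X'}$ rules out sphere bubbling, and the point constraint at $q$ combined with $H\equiv 0$ near $q$ rules out disk bubbling there. The main obstacle lies in Step 2: showing that the algebraic count is exactly $\pm 1$ (and not zero) requires a careful coherent-orientation computation across the round-sphere deformation, which is the technical heart of \cite[Theorem A]{zhou2019symplectic} and needs to be re-run in the present cobordism setting.
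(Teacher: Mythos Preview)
Your Steps 1 and 3 (niceness and compactness) are essentially the same as the paper's argument. One minor correction in Step 3: the action bound only forces the single negative asymptotic $\gamma'$ to be a \emph{simple} orbit, not necessarily $\gamma$ itself, since on a near-round ellipsoid all simple orbits have periods close to $\cC(\gamma)$; but niceness excludes every simple $\gamma'$, so your conclusion is unaffected.

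Step 2 takes a genuinely different route from the paper. You propose a direct geometric count via deformation to the round sphere, using the explicit complex lines in $\B^{2n}$, and you correctly flag the coherent-orientation bookkeeping as the main obstacle. The paper bypasses this entirely with an algebraic argument: glue the cobordism $X'$ to the standard ball to obtain the ball filling $D$ of $(S^{2n-1},\alpha)$, and use the known vanishing $SH^*(D;\Z)=0$. Since $\check{\gamma}$ is the \emph{unique} generator in degree $-1$, the differential $d_{+,0}$ is forced to send $\check{\gamma}$ to $\pm q$; this gives the count $\pm 1$ in $\widehat{D}$ for any regular admissible $J_t$, with no explicit curve or orientation computation needed. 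A neck-stretch along $(S^{2n-1},(1-\delta)\alpha)$ then identifies this moduli space with $\cM^{\infty}(\check{\gamma},q)$ in $\widehat{X'}$, using niceness on the fully stretched structure to preclude breaking. Your geometric approach should work in principle, but the paper's method is considerably shorter and turns what you identify as the technical heart into a one-line degree argument.
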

\begin{proof}
	Assume $J_t$ is nice but the moduli space is not compact, then we have a SFT building breaking. First of all, there are no multiple cascades level in the topmost SFT level, i.e.\ no configuration in the fully stretched case of Figure \ref{fig}. This is because $\overline{\gamma}$ already has the maximal symplectic action. If there was a multi-level cascades, the negative cylindrical end must be asymptotic to a non-constant Hamiltonian orbits with larger symplectic action, which is impossible. Then we only need to rule out the case of $1$-level cascades with negative punctures for the topmost SFT level. By action reasons explained in \eqref{eqn:action}, there is at most one negative puncture asymptotic to a simple Reeb orbit, but such configuration is ruled out since $J_t$ is nice. 
	
	Next we will show the set of nice $J_t$ is not empty, in fact, a generic $J_t$ is nice. Since $(1-\delta)\alpha$ is a non-degenerate ellipsoid, the minimal Conley-Zehnder index is $n+1$. Then the virtual dimension of the topmost curve, i.e.\ a curve in $\cM^{\infty}(\check{\gamma},q)$ with possible negative punctures asymptotic to $\Gamma_-$,  is $-\sum_{\gamma_-\in \Gamma_-}(\mu_{CZ}(\gamma_-)+n-3)\le -(2n-2)$ as long as $\Gamma_-\ne \emptyset$, where $\Gamma_-$ is the set of negative asymptotic Reeb orbits of the topmost curve. Since the transversality of the topmost curve is guaranteed by the genericity of $J_t$, there is no such SFT building. 
	
	To prove the algebraic count is $1$, we consider the filling of $(S^{2n-1},(1-\delta)\alpha)$ by the standard ball, which union with the cobordism $X$ is a filling $D$ of $(S^{2n-1},\alpha)$, i.e.\ the standard ball. We can use $H$ and a Morse function $g$ with unique minimum at $q$ to define symplectic cohomology of $D$.\footnote{A priori, $H$ has a finite slope, hence only defines a filtered symplectic cohomology. However we can modify $H$ outside a large $r$ to be a small perturbation of the quadratic Hamiltonian, which would define the full symplectic cohomology. Since we are only interested in the moduli space asymptotic to an interior point and $\overline{\gamma}$. The integrated maximal principle implies that any change outside a large region does not affect our curve.} Since $SH^*(D)=0$ with $\Z$ coefficient, and there is only one generator with degree $-1$, that is exactly $\check{\gamma}$. Therefore we must have $d_{+,0}(\check{\gamma})=\pm q$, and the coefficient can be fixed to $1$ after we choose an appropriate orientation of the determined line of $\overline{\gamma}$. Note that $d_{+,0}(\check{\gamma})=q$ implies that 
	\begin{equation}\label{eqn:1}
	\# \{u:\C \to \widehat{D}|\partial_su+J_t(\partial_t u-X_H)=0, \lim_{s\to \infty} u(s,0)=\check{\gamma},u(0)=q \}/\R=1,
	\end{equation}
	for any regular admissible $J_t$. Then we can apply neck-stretching along $(S^{2n-1},(1-\delta)\alpha)$. If the fully stretched almost complex structure is nice, then we have the moduli space \eqref{eqn:1} is contained completely outside $(S^{2n-1},(1-\delta)\alpha)$ and regular for sufficiently stretched almost complex structure, since there can not be any breaking. And in the fully stretched case, it is identified with \eqref{eqn:sphere}. Hence the claim follows.
\end{proof}	

In the following, we use $\la d_{+,0}(a), b \ra$ to denote the coefficient of $b$ in $d_{+,0}(a)$. Since only bottom cascades level can have the point constraint $u(0)=q$, which makes the bottom cascades level has a relative low virtual dimension, we will focus on analyzing the bottom cascades level in the following proposition.
\begin{proposition}\label{prop:unit}
	For sufficiently stretched generic almost complex structure $J_t$, we have the algebraic count of $\cM(\check{\gamma}^2_0, q)$ is $2$ and $\la d_{+,0}(\check{\gamma}^2_0), q\ra = 2$.
\end{proposition}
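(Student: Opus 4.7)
The plan is to reduce the count of $\cM(\check{\gamma}^2_0,q)$ after neck-stretching along $Y_1$ to a count in the topmost cobordism $\widehat{X}$, and then apply Proposition~\ref{prop:curve} in the $\Z_2$ double cover $\widehat{\widetilde{X}}$. I pick $J_t$ generic and sufficiently stretched so that elements of $\cM(\check{\gamma}^2_0,q)$ degenerate to SFT buildings whose topmost level carries the point constraint $q$, because $q$ lies in the interior of $X$.

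The first step is to show that the only surviving topmost configurations are $1$-level cascades in $\widehat{X}$ with no extra negative punctures, so that $\cM(\check{\gamma}^2_0,q)=\cM^{\infty}(\check{\gamma}^2_0,q)$. Extra negative punctures at simple orbits $\gamma_i$ are blocked by a combination of the contact action inequality~\eqref{eqn:action} with $\cC(\gamma_0^2)=2$ (which permits at most one simple puncture in the action budget) and an SFT index computation using the trivialization via the $\cO(-2)$-bounding disks (Remarks~\ref{rmk:CZ} and~\ref{rmk:CZ2}) together with~\eqref{eqn:CZ}, which forces a strictly negative virtual dimension; these index calculations make sense on $\widehat{X}$ because $c_1(\xi_{std})$ is torsion. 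Multi-cascade layers in the topmost SFT level are excluded since $\overline{\gamma^2_0}$ has maximal symplectic action in the relevant window. By the same inputs plus the $S^1$-equivariant transversality argument at the end of Proposition~\ref{prop:MB}, any genuine multi-level cascade in $d_{+,0}(\check{\gamma}^2_0)$ passing through an intermediate simple family $\overline{\gamma}_i$ carries a free $S^1$-symmetry coming from reparametrization of the negative asymptote of its top piece, and hence is never rigid. Together these yield $\langle d_{+,0}(\check{\gamma}^2_0),q\rangle = |\cM(\check{\gamma}^2_0,q)| = |\cM^{\infty}(\check{\gamma}^2_0,q)|$.

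To compute $|\cM^{\infty}(\check{\gamma}^2_0,q)|$ I lift to $\widehat{\widetilde{X}}$, the completed trivial cobordism on the non-degenerate ellipsoid $(S^{2n-1},p^*\alpha_f)$. Since $\gamma_0^2$ is null-homotopic in $\RP^{2n-1}$, it lifts to the minimal simple Reeb orbit $\widetilde{\gamma}$; the point $q$ has two preimages $\tilde{q}_1,\tilde{q}_2=\sigma(\tilde{q}_1)$. I choose $J_t$ to be $\Z_2$-equivariant so that $p^*J_t$ is nice in the sense of Proposition~\ref{prop:curve} at each $\tilde{q}_i$. The pullback of the downstairs Morse function on $\overline{\gamma^2_0}$ is a $\Z_2$-symmetric function on $\overline{\widetilde{\gamma}}$ with two minima $\check{\widetilde{\gamma}}_1,\check{\widetilde{\gamma}}_2$, yielding four upstairs moduli spaces $\cM^{\infty}(\check{\widetilde{\gamma}}_j,\tilde{q}_i)$ for $i,j\in\{1,2\}$. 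Each $u\in\cM^{\infty}(\check{\gamma}^2_0,q)$ has precisely two deck-related lifts, and these lifts land in distinct pairs of the four upstairs moduli spaces by $\Z_2$-symmetry, so the deck action on $\bigsqcup_{i,j}\cM^{\infty}(\check{\widetilde{\gamma}}_j,\tilde{q}_i)$ is free with quotient $\cM^{\infty}(\check{\gamma}^2_0,q)$. Proposition~\ref{prop:curve}, combined with the observation that in the two-minimum cascade formalism each unparametrized upstairs curve through a fixed $\tilde{q}_i$ contributes once to each of the two check generators, gives algebraic count $1$ for each of the four upstairs moduli spaces. Summing, $|\cM^{\infty}(\check{\gamma}^2_0,q)|=4/2=2$, with signs matched by $\Z_2$-equivariant coherent orientations.

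The main obstacle throughout is the combined cascade--SFT bookkeeping in the neck-stretched degeneration, together with verifying consistent orientation signs in the double-cover reduction. The essential structural ingredients are the torsion of $c_1(\xi_{std})$, which makes Conley--Zehnder indices in $\widehat{X}$ unambiguous and powers the index-based exclusion of negative-puncture configurations, and the simplicity of each $\gamma_i$, which powers the $S^1$-symmetry argument ruling out genuine multi-level cascade contributions to $d_{+,0}$.
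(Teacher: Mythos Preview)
Your double-cover count is essentially the paper's Step~3 and reaches the right answer, though you assert rather than prove that $p^*J_t$ is nice. The paper verifies this: transversality upstairs follows from transversality downstairs by comparing kernels of the linearized operators via $p_*$, and an upstairs curve with an extra negative puncture would push down to a curve in $\widehat{X}$ with a puncture at some $\gamma_i^2$, already excluded by the index computation in Step~2.

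The genuine gap is your exclusion of multi-level cascades. The claim that $\overline{\gamma^2_0}$ has maximal symplectic action in the relevant window is false: under the cohomological convention the simple families $\overline{\gamma}_i$ (period $1+i\epsilon<2$) all have strictly \emph{larger} action than $\overline{\gamma}^2_0$, so a Floer cylinder with positive end on $\overline{\gamma}^2_0$ and negative end on some $\overline{\gamma}_i$ is not forbidden by action. The sentence you borrow from the proof of Proposition~\ref{prop:curve} works on the sphere only because there the minimal orbit is the \emph{unique} orbit below the threshold. Your fallback $S^1$-symmetry argument also fails: the top cascade piece carries the point constraint $u_1(+\infty,0)=\check{\gamma}^2_0$, so rotating in $t$ moves the positive marker and there is no free $S^1$-action on the constrained cascade space; indeed such an argument would equally forbid the rigid cylinders from $\check{\gamma}^2_0$ to $\hat{\gamma}_0$ that the paper explicitly allows in Proposition~\ref{prop:vanish}. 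The paper's route is instead to analyze the \emph{bottom} cascade level after neck-stretching: by action and homology the intermediate family must be $\overline{\gamma}_0$, and since $\gamma_0$ is non-contractible in the cobordism $X$, the plane from $\overline{\gamma}_0$ through $q$ must acquire a negative puncture at $\gamma_0$; the resulting virtual dimension is $3-2n<0$, so the configuration is empty for generic $J_t$. This is the missing idea, and it is essential precisely because one has no control over whether $\gamma_0$ bounds a disk in the unknown filling $W$.
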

\begin{proof}
	The proof is divided into three parts.
	
	\textbf{Step 1: } For sufficiently stretched generic almost complex structure $J_t$, we have $\cM(\check{\gamma}^2_0,q)=\cM_{\check{\gamma}^2_0,q}$.

	We need to rule out multiple level cascades in order to prove the equality. Suppose we have a multiple level cascade, since each curve increases the symplectic action, then the negative cylindrical ends of the topmost cascades level is asymptotic to an orbit in $\overline{\gamma}_i$ for some $i$. If we apply neck-stretching to $Y_1=\partial_-X$, since $\gamma^2_0$ and $\gamma_i$ are in different homology classes, we must have an extra negative puncture in the limit of neck-stretching. Then by the action reason \eqref{eqn:action}, we must have $i=0$ for sufficiently stretched $J_t$. Therefore the bottom level of the cascades is a map $u:\C \to \widehat{W}$ with $\partial_su+J_t(\partial_t u -X_H)=0$ and $\lim_{s\to \infty}u(s,\cdot)\in\overline{\gamma}_0$ and $u(0)=q$. Then in the full neck-stretching, by action and homology class reason, we end up a map with an extra negative puncture asymptotic to $\gamma_0$.  Since $c_1(\xi_{std})$ is torsion and the trivializations from the obvious disk in $\cO(-2)$ are compatible by Remark \ref{rmk:CZ2}, the virtual dimension of such space (the positive cylindrical end has no orbit point constraint)  is 
	$$\mu_{CZ}(\gamma_0)-n-(\mu_{CZ}(\gamma_0)+n-3)=\mu_{CZ}(\hat{\gamma}_0)-n-1-(\mu_{CZ}(\gamma_0)+n-3)=3-2n<0.$$
	Therefore for generic and sufficient stretched $J_t$, there is no such configuration hence no multi-level cascades.

	\textbf{Step 2: }For sufficiently stretched generic almost complex structure $J_t$, we have $\cM^{\infty}(\check{\gamma}^2_0,q)$ is identified with $\cM(\check{\gamma}^2_0,q)$ and both of them are compact.
	
	We first argue for generic fully stretched $J_t$, $\cM^{\infty}(\check{\gamma}^2_0,q)$ is compact. By the same argument in step 1, there is no multi-level cascades like Figure \ref{fig} in the compactification of $\cM^{\infty}(\check{\gamma}^2_0,q)$. Then we need to rule out the case of $1$-level cascades with negative punctures. The virtual dimension of the moduli space of curves solving \eqref{eqn:SFT} with negative punctures asymptotic asymptotic to Reeb orbits in $\Gamma$ is the following
	\begin{equation}\label{eqn:index}
	\underbrace{\mu_{CZ}(\check{\gamma}^2_0)-n-1}_{\text{virtual dimension of \eqref{eqn:SFT}}}-\sum_{\gamma \in \Gamma}(\mu_{CZ}(\gamma)+n-3)= (\mu_{CZ}(\check{\gamma}^2_0)+n-3)-\sum_{\gamma \in \Gamma}(\mu_{CZ}(\gamma)+n-3) +2-2n.
	\end{equation}
	We have to make sure the Conley-Zehnder index are computed using compatible trivializations. By action reason explained above and homology class of the Reeb orbits, we know the only SFT building breaking configurations for the compactification of $\cM^{\infty}(\check{\gamma}^2_0,q)$ that we can have contain either two negative punctures both asymptotic to $\gamma_0$ or one negative puncture asymptotic to $\gamma^2_0$.  The trivializations from the obvious disk in $\cO(-2)$ are compatible, hence the virtual dimensions are well-defined and they are $4-2n<0$, $2-2n<0$ respectively. That is such configuration will not appear for generic $J_t$. That is $\cM^{\infty}(\check{\gamma}^2_0,q)$ is compact for generic $J_t$. By the similar argument in Proposition \ref{prop:curve} and the dimension computation above, we know that for sufficiently stretched generic $J_t$, $\cM(\check{\gamma}^2_0,q)$ is contained outside $\partial X_-$ and is identified with $\cM^{\infty}(\check{\gamma}^2_0,q)$.

	\textbf{Step 3: }For generic almost complex structure $J_t$, we have  $\#\cM^{\infty}(\check{\gamma}^2_0,q)=2$.

	Let $\gamma$ be the lift of $\gamma^2_0$ in $(S^{2n-1},p^*\alpha_f)$. Let $\check{\gamma}^\pm$, $q^{\pm}$ denote the two lifts of $\check{\gamma}_0^2$ and $q$ in $S^{2n-1}$. Then it is clear that we have a map
	$$P:\cM:=\bigcup_{\Diamond,\heartsuit\in \{\pm \}}\cM^\infty(\check{\gamma}^{\Diamond},q^{\heartsuit})\to \cM^{\infty}(\check{\gamma}_0^2,q),$$
	induced by the projection $p:S^{2n-1}\to \RP^{2n-1}$. Since $\C$ is simply connected, every curve in $\cM^{\infty}(\check{\gamma}_0^2,q)$ has two lifts in $\cM$. Therefore $P$ is a two-to-one surjective map. We know that $p^*\alpha_f$ is a non-degenerate ellipsoid close to a round sphere, to apply Proposition \ref{prop:curve}, we need to show that $p^*J_t$ is nice. First we verify every curve in $\cM$ is cut out transversely. It is clear that a non-zero vector in the kernel of the linearized perturbed Cauchy-Riemann operator $D_2$ for $\cM$ will  be pushed down by $p_*$ (note that $p$ is a local diffeomorphism) to a non-zero vector in the kernel of the linearized perturbed Cauchy-Riemann operator $D_1$ for $\cM^{\infty}(\check{\gamma}_0^2,q)$. Therefore $\dim \ker D_2\le \dim \ker D_1$. Since $J_t$ is generic, we have $\dim \ker D_1=1$ generated by the $\R$ translation. Hence $\dim \ker D_2=1$, since $\ker D_2$ always has the vector generated by the $\R$ translation. Since both the expected dimension of $\cM$ and $\cM^{\infty}(\check{\gamma}_0^2,q)$ are zero, we have $\dim \coker D_2=\dim \coker D_1=0$, i.e.\ $\cM$ is cut out transversely. To prove $p^*J_t$ is nice, we still need to show that there is no curve with one extra negative puncture asymptotic to a simple Reeb orbit. Any such curve can be pushed to $\RP^{2n-1}$ via $p$ to a curve with a negative puncture asymptotic to $\gamma_i^2$. However, such configuration is ruled out in the previous step for generic $J_t$. 
	
	As a consequence, by Proposition \ref{prop:curve}, we know that each of the four components of $\cM$ has an algebraic count of $1$ with an appropriate orientation of the determinant line bundle over $\overline{\gamma}$. This choice of orientation is consistent for all four components of $\cM$ as $\check{\gamma}^{\pm}, q^{\pm}$ are connected to each other respectively in the space of (orbit) point constraints. We can push the orientation of the determinant bundle of  $\overline{\gamma}$ to an orientation of the determinant bundle of  $\overline{\gamma}_0^2$ because $\gamma_0^2$ is a good orbit. Using this orientation structure for $\cM^{\infty}(\check{\gamma}_0^2,q)$, we know that $P$ preserves orientations and $\#\cM=4$. Therefore we have
	$\#\cM^{\infty}(\check{\gamma}^2_0,q)=2$. This finishes the proof of the proposition, since $\la d_{+,0}(\check{\gamma}^2_0),q\ra =\# \cM_{\check{\gamma}^2_0,q}=\# \cM(\check{\gamma}^2_0,q)=\# \cM^\infty(\check{\gamma}^2_0,q)$.
\end{proof}
\begin{remark}
	Here we use $n\ge 3$ to rule out the other potential configuration from neck-stretching in step 2. However this is just a convenient argument and it is not the reason our proof breaks down when $n=2$. In fact, if we use a pure symplectic field theory setup, then the curve is necessarily a double cover of a trivial cylinder, that lives over the critical point $q_0$ of $f$. Then we choose $q$ such that $\pi(q)\ne q_0$, then there is no such configuration. 
\end{remark}

\begin{proposition}\label{prop:other}
	When $n\ge 3$, for a generic and sufficiently stretched almost complex structure, we have $\la d_{+,0}\check{\gamma}_1,q\ra=0$.
\end{proposition}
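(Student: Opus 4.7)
My plan is to follow the argument of Proposition~\ref{prop:unit} closely: neck-stretch $J_t$ along $Y_1=\partial_- X$ and rule out every stratum in the compactification of $\cM(\check{\gamma}_1,q)$. The action relation \eqref{eqn:action} together with the fact that $\overline{\gamma}_0$ is the only $S^1$-family of Reeb orbits of period in $(0,\cC(\gamma_1))=(0,1+\epsilon)$ restricts multi-level cascades to the $2$-cascade $\check{\gamma}_1\to\overline{\gamma}_0\to q$, so only this $2$-cascade and the $1$-cascade $\cM^\infty(\check{\gamma}_1,q)$ can contribute. The $2$-cascade is eliminated exactly as in Step~1 of Proposition~\ref{prop:unit}: its bottom cylinder $u_2$ has free positive end on $\overline{\gamma}_0$ together with the marked point $u_2(0)=q$, and since $\gamma_0$ is non-contractible in $\widehat{X}\simeq\RP^{2n-1}$, the action bound $\sum\cC\le\cC(\gamma_0)=1$ together with $\pi_1$-parity force exactly one negative puncture at $\gamma_0$ in its topmost SFT level, with virtual dimension $\mu_{CZ}(\gamma_0)-n-(\mu_{CZ}(\gamma_0)+n-3)=3-2n$, negative for $n\ge3$, hence empty generically.

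The main step is the analysis of $\cM^\infty(\check{\gamma}_1,q)$ inside $\widehat{X}$. The decisive topological input is that $\gamma_1$ generates $\pi_1(\widehat{X})=\pi_1(\RP^{2n-1})=\Z_2$, so a curve $u:\C\to\widehat{X}$ asymptotic to $\check{\gamma}_1$ with no other punctures would provide a disk filling of $\gamma_1$, which is impossible. Thus negative punctures on $\partial_-X$ must be present. The action bound $\sum_{\gamma\in\Gamma}\cC(\gamma)\le 1+\epsilon$ excludes every double cover and allows at most one simple orbit, and the $\pi_1$-parity constraint (the $\Z_2$-sum of negative asymptote classes must equal the class of $\gamma_1$) forces exactly one, either $\gamma_0$ or $\gamma_1$. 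Using the compatible $\cO(-2)$-trivializations of Remark~\ref{rmk:CZ2}, the corresponding virtual dimensions
\[
\mu_{CZ}(\check{\gamma}_1)-n-1-(\mu_{CZ}(\gamma_j)+n-3)\in\{4-2n,\ 2-2n\}
\]
are strictly negative for $n\ge 3$; both moduli spaces are thus generically empty, so $\cM(\check{\gamma}_1,q)$ itself is empty for sufficiently stretched generic $J_t$, giving $\langle d_{+,0}\check{\gamma}_1,q\rangle=0$.

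The main obstacle I anticipate is the simultaneous bookkeeping of $\pi_1$-parity, action, and virtual dimension needed to distill the candidate topmost configurations into the short list above, as well as verifying that the $\cO(-2)$-trivializations remain compatible for curves in $\widehat{X}$ with mixed contractible and non-contractible asymptotes; once $\gamma_1$ is recognized as the $\Z_2$-generator of $\pi_1(\widehat{X})$, the dimension counting is essentially parallel to Steps~1--2 of Proposition~\ref{prop:unit}.
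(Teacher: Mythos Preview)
Your proposal is correct and follows essentially the same route as the paper's proof: rule out multi-level cascades by reducing to the bottom level with positive end on $\overline{\gamma}_0$ (exactly Step~1 of Proposition~\ref{prop:unit}), then for the single-level cascade use the non-contractibility of $\gamma_1$ in $\widehat{X}\simeq\RP^{2n-1}$ together with the action bound to force a single negative puncture at $\gamma_0$ or $\gamma_1$, whose virtual dimensions $4-2n$ and $2-2n$ are negative for $n\ge 3$. The only cosmetic difference is that you make the $\pi_1$-parity constraint explicit, whereas the paper subsumes it under ``contact action reasons'' after noting $\gamma_1$ is non-contractible.
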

\begin{proof}
	We first argue that for generic and sufficiently stretched almost complex structure, there is no contribution from multiple level cascades. If there is a multiple level cascades, then by symplectic action reason, the topmost cascades' negative end must be asymptotic to $\overline{\gamma}_0$. As a consequence, the bottom level of the cascades must have positive cylindrical end asymptotic to $\overline{\gamma}_0$. However such configuration was ruled out in the step 1 of Proposition \ref{prop:unit}.

	Next we argue that it is impossible to have a single level cascades contributing to $\la d_{+,0}\check{\gamma}_1,q\ra$. Since $\gamma_1$ is not contractible in $\RP^{2n-1}$, we know that in the fully stretched configuration, we must have breaking into holomorphic buildings. Since $q$ is in $X$ and by contact action reasons,  the topmost curve in the SFT building must be a curve $u:\C\backslash \{ z_0\} \to \widehat{X}$ such that 
	$$\partial_s u+J_t(\partial_t u-X_H)=0, \lim_{s\to \infty} u(s,0)=\check{\gamma}_1, u(0)=q,$$
	with $z_0$ is a negative puncture, where $u$ is asymptotic to $\gamma_0$ or $\gamma_1$. Since the trivializations from the disks in $\cO(-2)$ are compatible for our moduli space,  we have the moduli space of the above curve has a well-defined virtual dimension $4-2n$ for $\gamma_0$ puncture and $2-2n$ for $\gamma_1$ puncture. Then for $n\ge 3$, we can assume the configuration is empty. 
\end{proof}

\begin{proposition}\label{prop:vanish}
	If $W$ is an exact filling of $(\RP^{2n-1},\xi_{std})$ for $n\ge 3$, then $SH^{*,\le 2+\epsilon}_+(W;\R) \to H^{*+1}(W;\R)$ is surjective. 
\end{proposition}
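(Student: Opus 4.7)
The plan is to produce, for an appropriate choice of Hamiltonian $H \in \cH_{2+\epsilon}$ and a sufficiently stretched generic $J_t$, an element of $C(H)$ whose differential already lies in $C_0(H)$ and represents a class of the form $2\cdot 1 + A \in H^*(W;\R)$ with $A$ supported in positive degree. Since positive-degree classes on the compact manifold-with-boundary $W$ are nilpotent under cup product, Proposition~\ref{prop:kill} applied to $1 + A/2$ will then immediately give the claimed surjectivity. The candidate element is $\check\gamma_0^2$ itself.

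First I would verify that $\check\gamma_0^2$ is a cocycle in $C(H)$, i.e.\ $d_+\check\gamma_0^2 = 0$ in $C_+(H)$. By symplectic action (cascades strictly decrease period) and $\Z_2$-parity, $d_+\check\gamma_0^2$ lies in the $\R$-span of $\hat\gamma_0, \ldots, \hat\gamma_{n-1}$. The identity $d_+^2 = 0$ combined with Proposition~\ref{prop:MB}(1), which asserts $d_+\hat\gamma_j = a_j\check\gamma_{j-1}$ with $a_j \ne 0$ for $j\ge 1$, forces the coefficients of $\hat\gamma_1, \ldots, \hat\gamma_{n-1}$ to vanish; we are left with $d_+\check\gamma_0^2 = c_0\,\hat\gamma_0$ for a single scalar $c_0 \in \R$.

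The principal obstacle is showing $c_0 = 0$. I would neck-stretch along the hypersurface $Y_1$ as in Proposition~\ref{prop:unit} and examine the fully stretched limit of any cascade contributing to $c_0$. The topmost piece $u$ lives in $\widehat X \simeq \RP^{2n-1}\times \R$ with positive end at $\check\gamma_0^2$, negative end at $\hat\gamma_0$, and possibly extra negative Reeb punctures $\Gamma$. Since $\gamma_0^2$ is contractible in $\RP^{2n-1}$ while $\gamma_0$ is not, the $\pi_1(\widehat X) = \Z_2$ matching of asymptotics forces $\sum_{\gamma \in \Gamma}[\gamma] = 1 \in \Z_2$. The contact action bound \eqref{eqn:action} gives $\sum_{\gamma \in \Gamma} \cC(\gamma) \le \cC(\gamma_0^2) - \cC(\gamma_0) = 1$, and $\gamma_0$ is the only Reeb orbit fitting this budget, hence $\Gamma = \{\gamma_0\}$. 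The complementary piece of the SFT building is then a $J$-holomorphic plane in $\widehat{W'}$ asymptotic to the simple Reeb orbit $\gamma_0$. Exactly as for the class $[\hat\gamma_{n-1}]$ in the proof of Proposition~\ref{prop:MB}, the $S^1$-reparametrization of a simple Reeb orbit acts nontrivially on such a moduli space and so no element of it can be rigid; this rules out the entire SFT building for generic sufficiently stretched $J_t$, yielding $c_0 = 0$.

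Once $\check\gamma_0^2$ is a cocycle, $d(\check\gamma_0^2) = d_{+,0}\check\gamma_0^2$ is a coboundary in $C(H)$ lying in $C_0(H)$, so $\iota_{0,2+\epsilon}\bigl([d_{+,0}\check\gamma_0^2]\bigr) = 0$ in $SH^{*,\le 2+\epsilon}(W;\R)$. Proposition~\ref{prop:unit} guarantees that the coefficient of the unique minimum $q$ of $g$ in $d_{+,0}\check\gamma_0^2$ is $2$, while the remaining critical points of $g$ have strictly positive Morse index, so
\[
[d_{+,0}\check\gamma_0^2] \;=\; 2\cdot 1 + A \in H^*(W;\R), \qquad A \in \bigoplus_{i>0} H^i(W;\R).
\]
Positive-degree classes in $H^*(W;\R)$ are nilpotent, so Proposition~\ref{prop:kill} applies to $1 + A/2$ and produces the surjectivity of $SH^{*,\le 2+\epsilon}_+(W;\R) \to H^{*+1}(W;\R)$. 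The core difficulty is thus the vanishing of $c_0$: without the neck-stretching step, $\pi_1(W)$ could identify $\gamma_0$ and $\gamma_0^2$ and destroy the homotopical obstruction; the stretching replaces $W$ by a piece in which $\pi_1(\widehat X) = \Z_2$ is under control, at the price of the SFT-building analysis performed above.
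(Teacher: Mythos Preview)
Your argument has a genuine gap at the crucial step where you claim $c_0 = 0$. The $S^1$-equivariant transversality you invoke for the holomorphic plane in $\widehat{W'}$ asymptotic to $\gamma_0$ does not apply: the $S^1$-rotation of the domain $\C$ is already part of the biholomorphism group $\{z\mapsto az+b\}$ that is quotiented out when forming the moduli space of unparametrized planes, so it does not supply an extra free parameter. This is genuinely different from the situation in Proposition~\ref{prop:MB}, where the curve solves a Floer equation, the moduli space is only quotiented by the $\R$-translation in $s$, and (because the orbit is simple) one may take $J_t$ autonomous so that rotation in $t$ gives a \emph{free} $S^1$-action on top of that quotient.

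In fact $c_0$ is precisely the augmentation count of rigid holomorphic planes in the filling bounding $\gamma_0$, and it need not vanish; the paper's own footnote in the proof of this proposition says so, and Remark~\ref{rmk:n=2} computes $c_0=2$ for $W=T^*S^2$ when $n=2$. A further warning sign is that your argument never invokes the hypothesis $n\ge 3$, yet the conclusion is false for $n=2$.

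The paper does not attempt to prove $c_0=0$. Instead it corrects $\check\gamma_0^2$ by a multiple of $\check\gamma_1$: since $d_+\check\gamma_1=a_1\hat\gamma_0$ with $a_1\ne 0$ by Proposition~\ref{prop:MB}, the element $\check\gamma_0^2-\tfrac{c_0}{a_1}\check\gamma_1$ is $d_+$-closed. The additional input, Proposition~\ref{prop:other}, then shows $\langle d_{+,0}\check\gamma_1,q\rangle=0$ (and it is \emph{here} that $n\ge 3$ enters, via an index computation for the stretched configuration), so the correction term does not disturb the coefficient $2$ of $q$ supplied by Proposition~\ref{prop:unit}. Your reduction to $d_+\check\gamma_0^2=c_0\hat\gamma_0$ via $d_+^2=0$ and Proposition~\ref{prop:MB} is a perfectly good alternative to the paper's direct neck-stretching for that step; it is only the next step that fails.
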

\begin{proof}
	In view of Proposition \ref{prop:kill}, it is sufficient to prove that there is a class $1+A\in \oplus_{i\ge 0}H^{2i}(W;\R)$ for $A\in \oplus_{2i>0} H^{2i}(W;\R)$ is mapped to zero in $SH^{*,\le 2+\epsilon}(W;\R)$. Since we have a $\Z_2$ grading, it is sufficient to show that the composition $SH^{*,\le 2+\epsilon}_+(W;\R) \to H^{*+1}(W;\R) \stackrel{\text{projection}}{\longrightarrow} H^0(W;\R) =\R$ is nonzero by the tautological long exact sequence \eqref{eqn:exact}. 
	
	We consider the generator $\check{\gamma}_0^2$, it is not necessarily a closed class in the positive cochain $(C_+,d_+)$. However, we claim that $d_+(\check{\gamma}_{0}^2)$ can only have nonzero components in $\hat{\gamma}_0$ for a sufficiently stretched $J_t$. Again by contact action and homology reason, the only possible configuration after the neck-stretching is with negative end asymptotic to either $\check{\gamma}_0$ or $\hat{\gamma}_0$  and one negative puncture asymptotic to $\gamma_0$. Since $\mu_{CZ}(\check{\gamma}_0)$ and $\mu_{CZ}(\check{\gamma}^2_0)$ has the same parity, then we have the only contribution is to $\hat{\gamma_0}$.\footnote{It indeed contributes to the differential in any case there are rigid holomorphic plane bounded by $\gamma_0$ in $W$, see Remark \ref{rmk:n=2}}.  By Proposition \ref{prop:MB}, we have that $d_+(\check{\gamma}_1)=a_1\hat{\gamma}_0$ for $a_1\ne 0$.  If we write $d_+(\check{\gamma}_0^2)=k\hat{\gamma}_0$, then $\check{\gamma}^2_0-\frac{k}{a_1}\check{\gamma}_1$ is closed in the positive cochain complex. Then by Proposition \ref{prop:unit} and Proposition \ref{prop:other}, we have $SH^{*,\le 2+\epsilon}(W;\R) \to H^{*+1}(W;\R) \stackrel{\text{projection}}{\longrightarrow} H^0(W;\R) =\R$ is nonzero.  
 \end{proof}

\begin{remark}\label{rmk:n=2}
	The reason that our proof does not work for $n=2$ is because Proposition \ref{prop:other} does not hold for $n=2$. Indeed, for the fully stretched almost complex structure, the algebraic count of the top curve is $1$. Hence the contribution $\cM_{\check{\gamma}_1,q}$ is reduced to the augmentation to $\gamma_0$. Then we can discuss the following two cases,
	\begin{enumerate}
		\item When $W$ is the exact filling $T^*S^2$, then the augmentation is $2$. One can see it from the completion of $T^*S^2$ into $S^2\times S^2$. Moreover, one can show that $\rd_+(\check{\gamma}^2_0)=2\hat{\gamma}_0$ by the neck-stretching argument and augmentation. Then using $d_+(\hat{\gamma}_1)=2\check{\gamma}_0$ (c.f. discussion before Proposition \ref{prop:MB}), one sees that $1$ is not killed at least in $SH^{\le 2+\epsilon}(W;\R)$. The full computation in this spirit was carried out in \cite{diogo2019symplectic}.
		\item When $W$ is the strong filling $\cO(-2)$, then the augmentation is $t^{-1}$, where $t$ is the formal variable of degree $0$ to keep track of the intersection with divisor $\CP^1$ in the Novikov field. Then $\rd_+(\check{\gamma}^2_0)=t^{-1}\hat{\gamma}_0$. As a consequence $\check{\gamma}^2_0-\frac{t^{-1}}{2}\hat{\gamma}_1$ is closed in the positive symplectic cohomology and is mapped to $2-\frac{t^{-2}}{2}$.  Then $SH^*(W;\Lambda)=0$, this coincides with the result in \cite{ritter2014floer}.
	\end{enumerate}
\end{remark}

\begin{remark}\label{rmk:ritter}
	Ritter \cite{ritter2014floer} showed that for $n\ge 3$,  $SH^*(\cO(-2))=\Lambda[\omega]/(\omega^{n-2}-4t)$, where $t$ is the formal variable\footnote{The $t$ in \cite{ritter2014floer} is different from the $t$ in Remark \ref{rmk:n=2}, in the sense that $t$ in this remark is the generator of $H_2(\CP^{n-1})$, which intersects $\CP^{n-1}$ $n-3$ times. Then it is easy to see their equivalence.} in the Novikov field $\Lambda$ and $\omega$ is the generator of $H^2(\CP^{n-1};\R)$.  On the other hand, the quantum cohomology $QH^*(\cO(-2)) = \Lambda[\omega]/(\omega^{n}-4t\omega^2)$. Therefore the positive symplectic cohomology is the quotient $QH^*(\cO(-2))/SH^*(\cO(-2))$, which can be viewed as generated by $\omega^{n-1}-4t\omega$ and $\omega^{n-2}-4t$. In the Morse-Bott spectral sequence, the former is represented by multiples of $\check{\gamma}_0$ and the latter is represented by multiples of $\check{\gamma}^2_0$. Moreover, $\omega^{n-2}-4t$ projected to $H^0(W;\Lambda)$ is indeed a unit in $\Lambda$. However, $\omega^{n-2}-4t$ is a zero divisor in the quantum cohomology, hence it does not lead to the vanishing of symplectic homology. 
\end{remark}

\begin{proof}[Proof of Proposition \ref{prop:key}]
	By Proposition \ref{prop:MB} and \ref{prop:vanish}, we have $\sum \dim H^*(W;\R)\le 2$ and is supported in even degrees. We claim it is only possible for $H^n(W;\R)$ to be nonzero. For otherwise, if we have $H^{2k}(W;\R)=\R$ for $0<2k\ne n$, then $H^{2k}(W)$ contains a $\Z$ summand. Then from the long exact sequence, we know that $H^{2k}(W,\RP^{2n-1})$ also contains $\Z$ summand. Therefore, by Lefschetz duality and universal coefficient theorem, we have $H^{2n-2k}(W)$ also has a $\Z$ summand, which contradicts that the total rank is $\le 2$.
\end{proof}

\section{Generalizations}
In this section, we prove Theorem \ref{thm:gen} using the same argument. The threshold is not optimal. The upshot is for $n >k$, the cohomology of any exact filling of $(S^{2n-1}/\Z_k,\xi_{std})$ will have a bounded free part, which will lead to a contradiction. We first note that $(S^{2n-1}/\Z_k,\xi_{std})$ has a strong filling $\cO(-k)$, i.e.\ the total space of the degree $-k$ line bundle over $\CP^{n-1}$.
\begin{proposition}\label{prop:Z_k}
	Let $W$ be an exact filling of $(S^{2n-1}/\Z_k,\xi_{std})$ for $n>k$, then we have $\sum_{i\in \N}\dim H^{2i}(W;\R) \le k$ and $\sum_{i \in \N} \dim H^{2i+1}(W;\R) \le k-2$. Moreover, $H^{2n-i}(W;\R)=H^i(W;\R)$ for every $0<i<2n$.
\end{proposition}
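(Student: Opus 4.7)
The plan is to adapt the argument for Proposition~\ref{prop:key} line by line to the $\Z_k$ setting, with the $k$-fold cover $p\colon S^{2n-1}\to S^{2n-1}/\Z_k$ playing the role of the double cover. I first perturb $\alpha_{std}$ on $S^{2n-1}/\Z_k$ by a perfect Morse function $f$ on $\CP^{n-1}$ exactly as in Section~\ref{s2}, producing simple orbits $\gamma_i$ ($i=0,\ldots,n-1$) of period $\approx 1$ and covers $\gamma_i^j$ of period $\approx j$. The pullback $p^*\alpha_f$ on $S^{2n-1}$ is a non-degenerate ellipsoid whose simple Reeb orbits are precisely the $k$ distinct lifts of $\gamma_0^k$, each of Conley--Zehnder index $n+1$. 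The key filtered object is $SH^{*,\le k+\delta}_+(W;\R)$ for $\delta>0$ small, so that $\overline{\gamma}_0^k$ is the only new family past the threshold~$k$.

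The analog of Proposition~\ref{prop:unit} computes $\langle d_{+,0}(\check{\gamma}_0^k),q\rangle=\pm k$: Proposition~\ref{prop:curve} applied in the cover to each of the $k^2$ pairs of lifts $(\check{\gamma}^{(i)},q^{(j)})$ gives $k^2$ rigid planes with compatible orientations, while the projection $P\colon \cM\to \cM^\infty(\check{\gamma}_0^k,q)$ is $k$-to-$1$ (one lift per preimage of $q$), yielding $|\cM^\infty(\check{\gamma}_0^k,q)|=k$. The analog of Proposition~\ref{prop:other} must show $\langle d_{+,0}(\check{\gamma}_0^j),q\rangle=0$ for $1\le j\le k-1$; by the same contact-action and homology argument used in the $\Z_2$ case, $d_+\check{\gamma}_0^k$ is restricted to components involving only these lower-multiplicity $\check{\gamma}_0^j$. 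For $i=0$ the virtual-dimension formula reduces to $2(m-n-\sum i_\ell)$, where $m$ is the number of negative punctures of the topmost SFT level and their multiplicities obey $\sum j_\ell=j\le k-1$; hence $m\le k-1<n$ forces the dimension strictly negative, and the hypothesis $n>k$ is used precisely here. Consequently the closing combination $\check{\gamma}_0^k+\sum c_j \check{\gamma}_0^j$ in the positive complex maps to $k\ne 0$ in $H^0(W;\R)$, and Proposition~\ref{prop:kill} applied to the nilpotent class $1+A$ with $A\in\oplus_{i>0} H^{2i}(W;\R)$ forces $H^*(W;\R)\to SH^{*,\le k+\delta}(W;\R)$ to vanish and the connecting map $SH^{*,\le k+\delta}_+(W;\R)\to H^{*+1}(W;\R)$ to be surjective.

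It remains to bound the image of this connecting map, which is the $\Z_k$-analog of Proposition~\ref{prop:MB}. The Viterbo sandwich comparing $\alpha_f$ with $(1\pm\delta)\alpha_{std}$ identifies $SH^{*,\le k+\delta}_+(W;\R)$ with the corresponding slice of Morse--Bott positive SH of the standard cylindrical end; since the critical manifolds at each multiplicity level are copies of $S^{2n-1}/\Z_k$ contributing $H^*(S^{2n-1}/\Z_k;\R)=\R\oplus\R$, each of the $k-1$ full levels $j=1,\ldots,k-1$ supplies two Gysin-surviving classes $\check{\gamma}_0^j,\hat{\gamma}_{n-1}^j$, while at level $k$ only $\check{\gamma}_0^k$ contributes (its companion $\hat{\gamma}_0^k$ sits in a degree where the larger-window cohomology $SH^{*,(c,b]}_+$ vanishes, exactly as in the $\Z_2$ proof). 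This bounds $\dim SH^{*,\le k+\delta}_+(W;\R)\le 2k-1$. The $S^1$-equivariant transversality argument applies only to the simple (level-$1$) family and kills the image of $\hat{\gamma}_{n-1}^1$, leaving image rank $\le 2k-2$ in $H^{*+1}(W;\R)$. Splitting by parity, the $k$ surviving check generators $\check{\gamma}_0^j$ ($j=1,\ldots,k$) land in even cohomology and the $k-2$ surviving hats $\hat{\gamma}_{n-1}^j$ ($j=2,\ldots,k-1$) land in odd cohomology, yielding the claimed bounds $\dim H^{\mathrm{even}}(W;\R)\le k$ and $\dim H^{\mathrm{odd}}(W;\R)\le k-2$.

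The duality $H^{2n-i}(W;\R)\cong H^i(W;\R)$ for $0<i<2n$ follows from Lefschetz duality plus the long exact sequence of $(W,\partial W)$: since $H^*(\partial W;\R)=H^*(S^{2n-1}/\Z_k;\R)$ vanishes in degrees $0<*<2n-1$, for $1<i<2n-1$ the sequence collapses to $H^i(W,\partial W;\R)\cong H^i(W;\R)$, and Lefschetz then gives $H^i(W,\partial W;\R)\cong H^{2n-i}(W;\R)$; the boundary cases $i=1$ and $i=2n-1$ follow from a direct analysis of the top and bottom of the long exact sequence, using that the fundamental class of $\partial W$ maps isomorphically onto $H^{2n}(W,\partial W;\R)$. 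The principal technical obstacle is running the Viterbo sandwich and Morse--Bott identification across $k-1$ multiplicity levels rather than just one, and verifying that the only ``lost'' contributions to $H^{*+1}(W;\R)$ are $\hat{\gamma}_{n-1}^1$ (via $S^1$-transversality) and $\hat{\gamma}_0^k$ (via the degree argument at the top level).
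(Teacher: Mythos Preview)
Your outline is largely on target, but there is a genuine gap in the ``closing combination'' step, and it is exactly the place where the $\Z_k$ argument differs nontrivially from the $\Z_2$ one.

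You assert that $d_+\check{\gamma}_0^k$ lands only in the $\hat{\gamma}_0^j$, $j<k$, and then propose to close $\check{\gamma}_0^k$ by adding multiples of the lower covers $\check{\gamma}_0^j$. The first assertion is correct (this is Step~4 of the paper's proof), but the second does not follow. By the very same contact-action argument, $d_+\check{\gamma}_0^j$ for $j<k$ lands only in $\langle\hat{\gamma}_0^1,\ldots,\hat{\gamma}_0^{j-1}\rangle$; in particular nothing in $\langle\check{\gamma}_0^1,\ldots,\check{\gamma}_0^{k-1}\rangle$ can cancel the $\hat{\gamma}_0^{k-1}$ component of $d_+\check{\gamma}_0^k$. (Already in the $\Z_2$ case the correction term in Proposition~\ref{prop:vanish} is $\check{\gamma}_1$, the simple orbit over the critical point $q_1$, \emph{not} $\check{\gamma}_0$.) What makes the recursion work in the paper is the Morse--Bott/Gysin relation
\[
d_+\check{\gamma}^i_j \;=\; k\,\hat{\gamma}^i_{j-1} \;+\; \sum_{m<i,\;l\le j} a_{m,l}\,\hat{\gamma}^m_l,
\]
valid for $j\ge 1$ (Step~5): the nonzero leading coefficient $k$ lets one kill each $\hat{\gamma}_0^i$ with $\check{\gamma}_1^i$, at the price of introducing lower-multiplicity errors $\hat{\gamma}_l^m$ with $l\le 1$, which are then killed with $\check{\gamma}_2^m$, and so on. The resulting closed class has the form $\check{\gamma}_0^k+\sum_{i=1}^{k-1}\sum_{j=1}^{k-i}c_{i,j}\check{\gamma}^i_j$ (Step~6), involving orbits over \emph{higher} critical points $q_j$, $j\ge 1$.

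Consequently, your ``analog of Proposition~\ref{prop:other}'' must establish $\langle d_{+,0}(\check{\gamma}^i_j),q\rangle=0$ for all $i+j\le k$ with $j\ge 1$, not just for $\check{\gamma}_0^j$. This is the paper's Step~7; it requires ruling out both multi-level cascades and single-level curves with punctures, and the hypothesis $n>k$ enters through the bound on the number of punctures exactly as you indicated, but applied to these more general asymptotics. The remaining parts of your sketch (the $k^2$-to-$k$ covering count, the $2k-2$ rank bound with the two exclusions $\hat{\gamma}_{n-1}^1$ and $\hat{\gamma}_0^k$, and the duality from the long exact sequence of $(W,\partial W)$) are correct and match the paper.
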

\begin{proof}
	Similar to the proof of Theorem \ref{thm:main}, we perturb the standard Boothby-Wang contact form to $\alpha_f$ using a $C^2$-small perfect Morse function  $f$ on $\CP^{n-1}$, such that the following holds.
	\begin{enumerate}
		\item Reeb orbits of period smaller than $k+1$ are $\gamma_i^j$ for $0\le i \le n-1, 1 \le j \le k$, where $\gamma_i^j$ is the $j$-multiple cover of $\gamma_i$ and $\gamma_i$ projects to the $i$th critical point $q_i$ of $f$.
		\item The period of $\gamma_j$ is $1+\epsilon_j$.
		\item $\epsilon_j<\frac{\epsilon_{j+1}}{k},\epsilon_j\ll 1$.
	\end{enumerate}
     The third condition will be used to rule out certain terms in $d_+$. In view of these conditions, we can choose the Morse function $f$ to be the following,
     $$\left(\displaystyle\sum_{i=0}^{n-1}\frac{\epsilon_i|z_i|^2}{1+\epsilon_i}\right)\left/ \left(\displaystyle\sum_{i=0}^{n-1}\frac{|z_i|^2}{1+\epsilon_i}\right)\right..
     $$
     Then the pull back of the contact form $\alpha_f$ back to $S^{2n-1}$ is the one given by the ellipsoid $\sum_{i=0}^{n-1} \frac{|z_i|^2}{k(1+\epsilon_i)}=1$. With suitable choice of $\epsilon_i$, we may assume it is a non-degenerate ellipsoid.
     
     Similar to the proof of Proposition \ref{prop:unit}, we separate the proof into several steps.
     
     \textbf{Step 1:} For a generic and sufficiently stretched almost complex structure, $\cM(\check{\gamma}_0^k,q)=\cM_{\check{\gamma}_0^k,q}$. 
     
     Assume there are multi-level cascades, then the negative cylindrical end of the top cascades level must be asymptotic to $\overline{\gamma}_i^j$ for $j<k$ by symplectic action. After fully stretching the almost complex structure, the top cascades level must develop a set of negative punctures asymptotic to $\Gamma=\{\gamma_{i_1}^{j_1},\ldots,\gamma_{i_m}^{j_m}\}$, then we have $\sum_{s=1}^m j_s+j=0\mod k$ by homology reasons. Among all such configurations, the only cases with non-negative contact energy are $i=i_1=\ldots=i_m=0$ and $\sum_{s=1}^m j_s+j=k$. If the next cascades level is not the bottom level, then by the same argument, the negative cylindrical end must be asymptotic to $\gamma_0^{j'}$ for $j'<j$. We can keep the argument going and conclude that for sufficiently stretched almost complex structure, the bottom cascade level must have  a positive cylindrical end asymptotic to $\overline{\gamma}_0^s$ for $s<k$. Then in the fully stretched picture, this bottom level must have negative punctures asymptotic to $\gamma_0^{j_1},\ldots \gamma_0^{j_m}$ with $j_1+\ldots+j_m=s$ by the same argument as before. Note that the Conley-Zehnder index of $\gamma_i^j$ using the bounding disk in $\cO(-k)$ is 
     $$\mu_{CZ}(\gamma_i^j)+n-3=2i+2j-2.$$ 
     Note the bottom curve has the point constraint from $q$. The virtual dimension of this configuration (the positive cylindrical end has no orbit point constraint) is 
     $$\mu_{CZ}(\gamma_0^s)-n-\sum_{i=1}^m (\mu_{CZ}(\gamma_0^{j_i})+n-3)=2s-2n+1-\sum_{i=1}^m (2j_i-2)=2m-2n+1\le 2s-2n+1<0.$$
     Hence for generic and sufficiently stretched $J_t$, we do not have any multi-level cascades contributing to $\la d_{+,0}(\check{\gamma}^k_0), q\ra$. 
     
     \textbf{Step 2: }For a generic and sufficiently stretched almost complex structure, we have that moduli space of $1$-level cascades $\cM(\check{\gamma}_0^k,q)$ is identified with  the fully stretched moduli space of $1$-level cascades $\cM^{\infty}(\check{\gamma}_0^k,q)$ and both are compact. 
     
     Similar to the step 2 of Proposition \ref{prop:unit}, it is enough to prove the compactness of $\cM^{\infty}(\check{\gamma}_0^k,q)$. The multi-level cascades are ruled out by step 1. We only need to rule out the case with negative punctures. Again by action and homology reason, the potential breakings are those with negative punctures $\gamma^{k_1}_0,\ldots,\gamma^{k_j}_0$ for $\sum k_i=k$. But the expected dimension of this moduli space is 
     $$\mu_{CZ}(\gamma_0^k)-n-1-\sum_{s=1}^j(\mu_{CZ}(\gamma^{k_s}_0)+n-3)=2k-2n-\sum_{s=1}^j(2k_s-2)=2j-2n<0.$$
     Hence such moduli space is empty for generic $J_t$.

     \textbf{Step 3:} For a generic almost complex structure, $\cM^{\infty}(\check{\gamma}_0^k,q)=k$. 
     
     Using Proposition \ref{prop:curve} and the fact that $\gamma_0^k$ is a good Reeb orbit, this claim follows from the same argument in step 3 of Proposition \ref{prop:unit}. So far we have proven that $\la d_{+,0}(\check{\gamma}^k_0), q\ra =\#\cM_{\check{\gamma}_0^k,q}=\# \cM(\check{\gamma}_0^k, q)=\# \cM^\infty(\check{\gamma}_0^k, q)=k$ for generic and sufficiently stretched almost complex structures.

     \textbf{Step 4:} For a sufficiently stretched almost complex structure,   we have $d_+(\check{\gamma}^k_0)=\sum_{i=1}^{k-1} b_i \hat{\gamma}^i_0$. 
     
     For this we use the similar neck-stretching argument as in Proposition \ref{prop:vanish}. By parity of generators, we only need to consider  $\la d_+(\check{\gamma}^k_0),\hat{\gamma}_j^i\ra$. In the fully stretched picture, the curve in $\widehat{X}$ (could have multiple cascades levels) with maximal contact energy is the one with $k-i$ negative punctures asymptotic to $\gamma_0$. In this case, we have 
	$$\cC(\gamma_0^k)-\cC(\gamma_j^i)-(k-i)\cC(\gamma_0)=k(1+\epsilon_0)-i(1+\epsilon_j)-(k-i)(1+\epsilon_0)=i(\epsilon_0-\epsilon_j)< 0, \text{ if } j\ne 0.$$
	As a consequence of \eqref{eqn:action}, we have that $d_+(\check{\gamma}^k_0)=\sum_{i=1}^{k-1} b_i \hat{\gamma}^i_0$.
	
	\textbf{Step 5:} For $\epsilon_i$ sufficiently small and sufficiently stretched almost complex structure and $i<k,j>0$, we have 
	\begin{equation}\label{eqn:diff}
	d_+(\check{\gamma}^i_j)=k\hat{\gamma}^i_{j-1}+\sum_{m<i,l\le j} a_{m,l}\hat{\gamma}^{m}_l.
	\end{equation}

	The proof of the first term follows from the same argument in Proposition \ref{prop:MB}. We use a filtered symplectic cohomology with action window around $i$, which is generated by check and hat generators of $\overline{\gamma}^i_0,\ldots \overline{\gamma}^i_{n-1}$. Then we find a filling $W^{std}_{1-\delta}\subset W$ of the standard Morse-Bott contact form $(1-\delta)\alpha_{std}$. Using the functorial argument in Proposition \ref{prop:MB} and Proposition \ref{prop:Viterbo'}, we can conclude that the filtered symplectic cohomology is isomorphic to $H^*(S^{2n-1}/\Z_k;\R)$ via the Viterbo transfer map, where $S^{2n-1}/\Z_k$ is the critical submanifold in the free loop space parameterizing the space of parameterized Reeb orbits of multiplicity $i$ of $\alpha_{std}$. For $\epsilon,\delta$ sufficiently small, the isomorphism respects a local $\Z$-grading, this implies that  
	$$d_+(\check{\gamma}^i_j)=c^i_j\hat{\gamma}^i_{j-1}+\text{ orbits with lower multiplicty.}$$
	for $c^i_j\ne 0$. When using $\Z$-coefficient, we can get $c^i_j=\pm k$. A further comparison on orientations can conclude that $c^i_j$ can be chosen to be $k$ with suitable orientation data. For the argument below, using $c^i_j\ne 0$ is sufficient, but for the simplicity of notation we use $c^i_j=k$.
    
    For the contribution from orbits with lower multiplicity, i.e.\ $\la d_+(\check{\gamma}^i_j), \hat{\gamma}_l^m\ra$ with $m<i$, the curve with maximal contact action difference in the fully stretched picture is those with $i-m$ negative punctures asymptotic to $\gamma_0$. Then 
	\begin{eqnarray*}
	\cC(\gamma_j^i)-\cC(\gamma_l^m)-(i-m)\cC(\gamma_0) &= &i(1+\epsilon_j)-m(1+\epsilon_l)-(i-m)(1+\epsilon_0) \\
	&= &i\epsilon_j -m\epsilon_l-(i-m)\epsilon_0 \\
	&< & \frac{i}{k}\epsilon_l-m\epsilon_l-(i-m)\epsilon_0<0, \text{ if } l>j.
	\end{eqnarray*}
	Therefore the claim follows from \eqref{eqn:action}.

	\textbf{Step 6}: We claim that
	\begin{equation}\label{eqn:lazy}
	d_+(\check{\gamma}^k_0+\sum_{i=1}^{k-1}\sum_{j=1}^{k-i} c_{i,j} \check{\gamma}^i_j)=0,
	\end{equation}
	where $c_{i,j}\in \R$ is defined recursively by $a_{m,l}$.

	To see \eqref{eqn:lazy}, first note that by \eqref{eqn:diff}, we have
	$$d_+(\check{\gamma}_0^k-\sum_{i=1}^{k-1}\frac{b_i}{k}\check{\gamma}_1^i)=\sum_{i=1}^{k-2} \sum_{j=0}^{1} d_{i,j} \hat{\gamma}^i_j,$$
	Then we have 
	$$d_+(\check{\gamma}_0^k-\sum_{i=1}^{k-1}\frac{b_i}{k}\check{\gamma}_1^i-\sum_{i=1}^{k-2}\sum_{j=0}^1\frac{d_{i,j}}{k}\check{\gamma}^i_{j+1})=\sum_{i=1}^{k-3}\sum_{j=0}^{2} e_{i,j}\hat{\gamma}^i_j.$$
	Then we can keep applying the argument to obtain \eqref{eqn:lazy}.
	
	\textbf{Step 7:} For a generic and sufficiently stretched almost complex structure, we have $\la d_{+,0}(\check{\gamma}^i_j), q \ra=0$ for $i+j\le k,j\ge 1$. 
	
	We first claim that there is no multi-level cascades contributing to $\la d_{+,0}(\check{\gamma}^i_j), q \ra$. Assume otherwise, the top cascades level's negative cylindrical end is asymptotic to $\overline{\gamma}^m_l$ for $m\le i$. In the fully stretched picture, the curve with maximal contact energy are those with $i-m$ (which could be zero, when $i=m$) negative punctures asymptotic to $\gamma_0$. The contact energy is given by 
	 $$\cC(\gamma_j^i)-\cC(\gamma^m_l)-(i-m)\cC(\gamma_0)=i\epsilon_j-m\epsilon_l-(i-m)\epsilon_0<0, \text{ if } l>j.$$
	 Therefore we must have $l\le j$. Of course, the top cascades level's negative cylindrical end cannot be asymptotic to $\overline{\gamma}^i_j$, as this would force the curve to be a trivial cylinder. We can keep the argument going and conclude the bottom level of the cascades must have the positive cylindrical end asymptotic to $\overline{\gamma}^m_l$ with $m\le i,l\le j$ and the equality does not holds simultaneously. In particular, $m+l<i+j$. We consider this bottom cascades level in the fully stretched picture, since $m<i+j\le k$, we must have negative punctures by homology reason. Then the maximal virtual dimension of the curve in $\widehat{X}$ is from the curve with $m$ negative punctures asymptotic to $\gamma_0$, which is 
	 $$\mu_{CZ}(\gamma_l^m)-n-m(\mu_{CZ}(\gamma_0)+n-3)=2l+2m+1-2n<2i+2j-2n\le 2k-2n<0.$$
	 In particular, there is no multi-level contribution. Next we will rule out the single level cascades. In the fully stretched situation, since $j\ge 1$, we have $i<i+j\le k$ and the curve must have negative punctures in the topmost SFT level by homology reason. The maximal virtual dimension of the topmost level is from the curve with $i$ negative punctures asymptotic to $\gamma_0$. Hence the virtual dimension of the top level curve is at most
	 $$\mu_{CZ}(\gamma^i_j)-n-1-i(\mu_{CZ}(\gamma_0)+n-3)=2i+2j-2n\le 2k-2n<0.$$
	 Assembling the results above, we know that the closed cochain $\check{\gamma}^k_0+\sum_{i=1}^{k-1}\sum_{j=1}^{k-i} c_{i,j} \check{\gamma}^i_j$ is mapped to $k$ in $H^0(W;\R)$. As a result, we have $SH_+^{*,\le k+\epsilon_1}(W;\R)\to H^{*+1}(W;\R)$ is surjective by Proposition \ref{prop:kill}.
	 
	 \textbf{Step 8:} For $\epsilon_i$ sufficiently small $\Ima(SH_+^{*,\le k+\epsilon_1}(W;\R)\to H^{*+1}(W;\R))$ has rank at most $2k-2$.
	 
	 By the same argument of Proposition \ref{prop:MB}, $SH_+^{*,\le k+\epsilon_1}(W;\R)$ can be assembled from $k$ filtered symplectic cohomology with action window around $1,\ldots,k$ by iterating the tautological long exact sequences. More precisely, the cochain complex of the first $k-1$ filtered symplectic cohomology is generated by $\check{\gamma}_0^i,\hat{\gamma}_0^i,\ldots,\check{\gamma}_{n-1}^i,\hat{\gamma}_{n-1}^i$ for $1\le i \le k-1$, and the cohomology is $H^*(S^{2n-1}/\Z_k; \R)$ and generated by $\check{\gamma}_0^i,\hat{\gamma}_{n-1}^i$.  The cochain complex of the last filtered symplectic cohomology is generated by $\check{\gamma}^k_0$ and $\hat{\gamma}^k_0$, which also generate the cohomology. By the same argument in Proposition \ref{prop:MB}, $\hat{\gamma}^k_0$ will be killed when we increase the action threshold and  $\hat{\gamma}_{n-1}$ does not map to nontrivial class in $H^*(W;\R)$ by $S^1$ symmetry which is guaranteed by the $S^1$-equivariant transversality. Therefore $\Ima(SH_+^{*,\le k+\epsilon_1}(W;\R)\to H^{*+1}(W;\R))$ has rank at most $2k-2$, of which at most $k$ are from check orbits and at most $k-2$ are from hat orbits. 
	 
	 Then the proposition follows from that check orbits have odd grading and hat orbits have even grading. The last part is a consequence of Lefschetz duality and universal coefficient theorem.
\end{proof}

\begin{proposition}\label{prop:Z_p}
	Let $p$ be an odd prime, then for any strong filling $W$ of $(S^{2n-1}/\Z_p, \xi_{std})$, we have $H^{2i}(W)\to H^{2i}(S^{2n-1}/\Z_p)=\Z_p$  is surjective if $0<i<n$ and in $p$-adic representation, $i$ is digit-wise no larger than $n$, i.e.\ if we write $n=\sum_{s=0}^{\infty}a_s p^s, i=\sum_{s=0}^{\infty} b_sp^s$ then $b_s\le a_s$ for all $s\ge 0$.
\end{proposition}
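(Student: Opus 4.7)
The approach would mirror Proposition \ref{prop:Chern} exactly, with the prime $p$ playing the role of $2$ and Lucas' theorem replacing the characteristic-$2$ identity $(\sum a_i)^2\equiv \sum a_i^2\pmod 2$ used there. First I would reduce to a Chern class computation: since $W$ is a strong filling, $\xi_{std}$ is a complex corank-one subbundle of $TW|_{\partial W}$ with trivial complex complement, so $c_i(W)|_{S^{2n-1}/\Z_p}=c_i(\xi_{std})$ for $i<n$. Because $H^{2i}(S^{2n-1}/\Z_p)=\Z_p$ is cyclic of prime order, surjectivity is equivalent to showing $c_i(\xi_{std})\neq 0$ under the digit-wise hypothesis.

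Next I would compute $c(\xi_{std})$ from the standard filling $\cO(-p)$. The splitting $T\cO(-p)|_{\CP^{n-1}}\cong T\CP^{n-1}\oplus \cO(-p)$ gives total Chern class $(1+u)^n(1-pu)$ with $u$ the hyperplane class, so the $u^i$-coefficient is $\binom{n}{i}-p\binom{n}{i-1}\equiv \binom{n}{i}\pmod p$. By Lucas' theorem, $\binom{n}{i}\not\equiv 0\pmod p$ precisely when every $p$-adic digit of $i$ is no larger than the corresponding digit of $n$, which is exactly the hypothesis of the proposition.

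Finally I would push this through the Gysin sequence of the circle bundle $S^{2n-1}/\Z_p\to \CP^{n-1}$, whose Euler class is $\pm pu$. For $0<i<n$ the relevant portion reads $0\to \Z\xrightarrow{\times p}\Z\to \Z_p\to 0$, identifying the restriction $H^{2i}(\cO(-p))=\Z\to H^{2i}(S^{2n-1}/\Z_p)=\Z_p$ with reduction mod $p$. Combining the steps, $c_i(\cO(-p))$ maps to a nonzero element of $\Z_p$ under our hypothesis, which is necessarily a generator, yielding the desired surjectivity.

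I do not foresee any serious obstacle; the argument is essentially algebraic and is a direct $p$-odd analogue of Proposition \ref{prop:Chern}. The only genuinely new ingredient is Lucas' theorem, and the only detail worth double-checking is that the Gysin connecting homomorphism is literally (up to sign) reduction mod $p$, which follows immediately from the explicit Euler class $\pm pu$.
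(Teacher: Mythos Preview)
Your proposal is correct and follows essentially the same route as the paper: both compute the total Chern class of $\cO(-p)$ as $(1+u)^n(1-pu)\equiv(1+u)^n\pmod p$, use the Gysin sequence to identify the restriction with reduction mod $p$, and conclude via $c_i(W)|_{\partial W}=c_i(\xi_{std})$. The only cosmetic difference is that the paper re-derives the nonvanishing of $\binom{n}{i}\pmod p$ by writing $(1+u)^n\equiv\prod_s(1+u^{p^s})^{a_s}$ via the Frobenius identity and expanding, whereas you invoke Lucas' theorem by name; these are the same argument.
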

\begin{proof}
	The proof is similar to Proposition \ref{prop:Chern}. We first compute the Chern classes of $\xi_{std}$ using the standard filling $\cO(-p)$, i.e.\ the total space of degree $-p$ line bundle over $\CP^{n-1}$. The total Chern class of the total space $\cO(-p)$ is $(1+u)^n(1-pu)$, where $u$ is generator of $H^2(\cO(-p))=H^2(\CP^{n-1})$. We write $n$ in $p$-adic as $\sum_{s=0}^k a_s p^s$. Then using the fact $(\sum x_*)^p=\sum x_*^p \mod p$ and $\binom{m}{l} \ne 0 \mod p$ whenever $0\le l\le m < p$. We have the following
	\begin{equation}\label{eqn:padic}
	(1+u)^n(1-pu)=(1+u)^n=\prod_{s=0}^k (1+u^{p^s})^{a_s} = \prod_{s=0}^k(1+\sum_{j=1}^{a_s}c_{s,j}u^{p^sj}) \mod p,
	\end{equation}
	for $c_{s,j}=\binom{a_s}{j}\ne 0 \mod p$. That is in \eqref{eqn:padic}, the monomials with non-constant coefficient are those $u^{\sum_{s=0}^kp^sj_s}$ for $j_s\le a_s$, i.e.\ the degree is digit-wise smaller or equal to $n$ in $p$-adic representation. In other words, the $i$th Chern class of the total space $\cO(-p)$ mod $p$ is nonzero iff in $p$-adic representation, $i$ is digit-wise no larger than $n$. By the Gysin sequence, we know $H^{2i}(\cO(-p))\to H^{2i}(S^{2n-1}/\Z_p)=\Z_p$ is the mod $p$ map for $0<i<n$ and $c_i(\cO(-p))|_{S^{2n-1}/\Z_p}=c_i(\xi_{std})$. Therefore we have $c_i(\xi_{std})\ne 0$ for any such $i$. Since for any strong filling $W$, we have $c_i(W)|_{S^{2n-1}/\Z_p}=c_i(\xi_{std})$, the claim follows.
\end{proof}
\begin{proof}[Proof of Theorem \ref{thm:gen}]
	Let $I$ be the set of $i$ such that $0<i<n$ and is digit-wise no larger than $n$ in $p$-adic representation. A basic observation is that if $i\in I$ then $n-i\in I$. It is clear that  $|I| = \sum a_s - 2$. Then $I_{\frac{1}{2}}:=I \cap (0,\frac{n}{2})$ has at least $\frac{1}{2}\sum a_s -2$ element.  Note that if $\sum a_s > 3p-3$, then $n>p$. Then we can apply Proposition \ref{prop:Z_k} to get that $\sum_{0\le 2j < n}\dim H^{2j}(W;\R)$ is at most $\lfloor \frac{p}{2}\rfloor=\frac{p-1}{2}$ and $\sum_{0\le 2j+1 < n}\dim H^{2j+1}(W;\R)$ is at most $\lfloor \frac{p-2}{2}\rfloor =\frac{p-3}{2}$ as $p$ is odd. We want to find $i\in I_{\frac{1}{2}}$, such that $H^{2i-1}(W),H^{2i}(W),H^{2i+1}(W)$ are torsions.  If $H^{2j}(W;\R)\ne 0$, we can not choose $i=j$, and if $H^{2j+1}(W;\R)\ne 0$, we can not choose $i=j$ or $i=j+1$. Therefore if $\frac{1}{2}\sum a_s-2 > p-3+\frac{p-1}{2}$, i.e.\ $\sum a_s> 3p-3$, we have $|I_{\frac{1}{2}}|>\sum_{0\le 2j < n}\dim H^{2j}(W;\R)+2\sum_{0\le 2j+1 < n}\dim H^{2j+1}(W;\R)$. In particular, we have $i\in I_{\frac{1}{2}}$, such that $H^{2i-1}(W),H^{2i}(W), H^{2i+1}(W)$ are torsions. Then by symmetry, we also have $H^{2n-2i-1}(W), H^{2n-2i}(W), H^{2n-2i+1}(W)$ are all torsions. Then Lefschetz duality and universal coefficient theorem imply that 
	$$0\to H^{2n-2i+1}(W) \to H^{2i}(W) \to \Z_p \to H^{2n-2i}(W) \to H^{2i+1}(W) \to 0,$$
	$$0\to H^{2i+1}(W) \to H^{2n-2i}(W) \to \Z_p \to H^{2i}(W) \to H^{2n-2i+1}(W) \to 0,$$
	But since $i, n-i \in I$, we have $H^{2i}(W) \to \Z_p$ and $H^{2n-2i}(W) \to \Z_p$ above are both surjective by Proposition \ref{prop:Z_p}. Then $H^{2n-2i}(W) \to H^{2i+1}(W)$ and $H^{2i}(W) \to H^{2n-2i+1}(W)$ must be isomorphisms. Since all of the group are torsions, from the long exact sequences we have both $|H^{2i}(W)|=|H^{2n-2i+1}(W)|$ and $|H^{2i}(W)|=p|H^{2n-2i+1}(W)|$, which is a contradiction.
\end{proof}

\begin{remark}\label{rmk:final}
	If one uses the polyfold technique in \cite{zhou2018quotient} to achieve $S^1$-equivariant transversality. We can bring the rank of $H^*(W;\R)$ down to $k$, since those hat orbits will not contribute to $H^*(W;\R)$ as in the proof of Proposition \ref{prop:key}. Observe that the check orbit will be mapped to even degree cohomology of $W$. Then we can improve Theorem \ref{thm:gen} by a factor to $\sum a_s>p+3$. It is interesting to note that $n\ge k+1$ in Proposition \ref{prop:Z_k} is the threshold for $\C^n/\Z_k$ to be a terminal singularity. By \cite{mclean2016reeb}, the terminality of a singularity is equivalent to that the link has a contact form with positive rational SFT degrees.
	
	The key observation in this paper is that $SH^*_+(W;\R)\to H^{*+1}(\partial W;\R)$ contains $1$ in the image for the hypothetical exact filling $W$, which bears a lot similarity with results in \cite{zhou2019symplectic}. In view of Ritter results \cite{ritter2014floer}, $SH^*_+(\cO(k);\Lambda)\to H^{*+1}(S^{2n-1}/\Z_k;\Lambda)$ is very likely to be isomorphic to the hypothetical $SH^*_+(W;\Lambda)\to H^{*+1}(S^{2n-1}/\Z_k;\Lambda)$ for $n\ge k+1$.  The invariance phenomenon here has gone beyond those in \cite{zhou2019symplectic} as we have multiple augmentations.  On the other hand, when $n\le k$, as we seen from the $n=k=2$ case, the map from positive symplectic cohomology to the cohomology of boundary depends on the filling. For higher $n\le k$ examples, although we do not know if there are more fillings, but there are algebraic augmentations which would change whether $1$ is in the image of the map from linearized non-equivariant contact homology to the cohomology of the boundary. The $n=k$ case is indeed the limit of our method, as our symplectic part does not differentiate exact fillings with Calabi-Yau fillings (see Remark \ref{rmk:other}) and $\C^n/\Z_n$ indeed carries a Calabi-Yau filling with the right rank of cohomology.
\end{remark}
In case of strong fillings, the sequence \eqref{eqn:exact} still holds after we replace $H^*(W;\R)$ by the quantum cohomology $QH^*(W;\Lambda)$. As a group, $QH^*(W;\Lambda)\simeq H^*(W;\Lambda)$, but the map $QH^*(W;\Lambda) \to SH^*(W;\Lambda)$ is a unital ring map if we use the deformed quantum ring structure on $QH^*(W;\Lambda)$. 

In view of the proof of Proposition \ref{prop:kill}, to imply the vanishing of symplectic cohomology and then a contradiction, we need to show that $1+A$ is never a zero divisor. Hence we have the following. 
\begin{corollary}\label{cor:cor}
	Let $W$ be any (semi-positive \cite{mcduff2012j}) strong filling of the contact manifolds in Theorem \ref{thm:main} and \ref{thm:gen}, then the quantum cohomology $QH^*(W;\Lambda)$ has a zero divisor in the form of $1+A$ for $A\in\oplus_{i>0} H^{2i}(W;\Lambda)$.
\end{corollary}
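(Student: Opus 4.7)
The plan is to run the argument of Theorems \ref{thm:main} and \ref{thm:gen} with quantum cohomology in place of ordinary cohomology, and to argue by contradiction: if $QH^*(W;\Lambda)$ has no zero divisor of the form $1+A$ with $A\in\oplus_{i>0}H^{2i}(W;\Lambda)$, then $SH^*(W;\Lambda)=0$, and the rank bounds on $H^*(W;\Lambda)$ this implies will contradict the topological Propositions \ref{prop:Chern} / \ref{prop:Z_p} (which already apply to any strong filling, not just exact ones).

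First, I would rerun the holomorphic-curve computations of Propositions \ref{prop:unit}, \ref{prop:other}, \ref{prop:vanish} (and their $\Z_k$-analogues in Proposition \ref{prop:Z_k}) for a semi-positive strong filling $W$ with Novikov coefficients. Under semi-positivity, the point-constrained moduli spaces $\cM(\check{\gamma}_0^k, q)$ and the multi-level cascades contributing to $d_{+,0}$ are compact and cut out transversely without virtual perturbations. Crucially, the neck-stretching / action / Conley--Zehnder-index arguments take place in the topmost SFT level inside the symplectization of the contact boundary, where $c_1(\xi_{std})$ is torsion; these arguments are insensitive to the global symplectic topology of $W$. The only novelty in the strong case is that bottom-level curves may intersect pseudoholomorphic spheres in $W$, contributing strictly positive powers of the Novikov variable. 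Thus the class $\check{\gamma}_0^k+\sum c_{i,j}\check{\gamma}_j^i$ from Step 6 of Proposition \ref{prop:Z_k} remains closed in $C_+$, and its image under the connecting map $SH_+^{*,\le k+\epsilon}(W;\Lambda)\to QH^{*+1}(W;\Lambda)$ has leading term $k\cdot 1 \in QH^0(W;\Lambda)$, with all Novikov corrections landing in $\oplus_{i>0}H^{2i}(W;\Lambda)$. Dividing by $k\in\Lambda^{\times}$, we obtain a class $1+A$ that lies in the kernel of $QH^*(W;\Lambda)\to SH^*(W;\Lambda)$.

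Assuming for contradiction that $1+A$ is not a zero divisor in $QH^*(W;\Lambda)$, the commutative diagram of Proposition \ref{prop:kill}, now with $QH^*(W;\Lambda)$ in place of $H^*(W;\R)$ and using the $QH^*$-module structure on $SH^*$ induced by the pair-of-pants product, gives $x = (x\cup(1+A)^{-1})\cup\iota_{0,a}(1+A)=0$ for every $x\in QH^*(W;\Lambda)$, hence $SH^*(W;\Lambda)=0$. The tautological long exact sequence then yields $QH^{*+1}(W;\Lambda)\cong SH_+^*(W;\Lambda)$, and the filtered rank bounds in Proposition \ref{prop:MB} / Step 8 of Proposition \ref{prop:Z_k} carry over verbatim to bound $\dim_{\Lambda} H^*(W;\Lambda)=\dim_{\Lambda}QH^*(W;\Lambda)$ by exactly the same constants as in Proposition \ref{prop:key} / \ref{prop:Z_k}.

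With these rank and parity bounds in hand, the topological endgames of Theorems \ref{thm:main} and \ref{thm:gen} apply without modification: Propositions \ref{prop:Chern} and \ref{prop:Z_p} are already stated for arbitrary strong fillings, and the counting of torsion summands via Lefschetz duality and the universal coefficient theorem produces the same numerical contradiction $|H^{2n-2^{k+1}}(W)|=|H^{2^{k+1}+1}(W)|$ versus $|H^{2n-2^{k+1}}(W)|=2|H^{2^{k+1}+1}(W)|$ (and its $p$-adic analogue). The main obstacle I expect is the careful verification in Step 1: one must confirm that Novikov-weighted contributions from curves wandering into $W$ do not shift the leading $1\in H^0$ term to higher grading or destroy the identity $\la d_{+,0}(\check{\gamma}_0^k),q\ra=k$. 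Semi-positivity is precisely what ensures that any such sphere-bubble contribution appears in strictly positive Novikov degree, so the class $A$ genuinely lies in $\oplus_{i>0}H^{2i}(W;\Lambda)$ as required.
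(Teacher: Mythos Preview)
Your approach is essentially the same as the paper's, and the overall structure is correct. There is one small but genuine gap: in your Step 2 you write $(1+A)^{-1}$ immediately after assuming that $1+A$ is not a zero divisor. In a general ring these are different hypotheses; a non-zero-divisor need not be a unit (think of $2\in\Z$). The paper closes this gap by observing that $QH^*(W;\Lambda)$ is a finite-dimensional $\Lambda$-algebra, so left multiplication by any element is a $\Lambda$-linear endomorphism of a finite-dimensional vector space, hence injective iff surjective. This forces the dichotomy: every element of $QH^*(W;\Lambda)$ is either a zero divisor or a unit. Once you insert this one-line observation, your $(1+A)^{-1}$ is justified and the rest of your argument goes through.

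A secondary remark: your sentence ``hence $SH^*(W;\Lambda)=0$'' overshoots slightly. What the argument of Proposition~\ref{prop:kill} actually gives you is that $\iota_{0,a}:QH^*(W;\Lambda)\to SH^{*,\le a}(W;\Lambda)$ is zero, hence the connecting map $SH_+^{*,\le a}(W;\Lambda)\to QH^{*+1}(W;\Lambda)$ is surjective at the chosen filtration level $a$. That filtered surjectivity is exactly what feeds into the rank bound of Proposition~\ref{prop:MB} / Step 8; you do not need, and do not directly get, vanishing of the full $SH^*$.
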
	
\begin{remark}
	Symplectic cohomology for general strong fillings requires virtual techniques to deal with sphere bubbles in general. If one wishes to avoid the technical overhead, $W$ should be restricted to the case of semi-positive strong fillings. Then the symplectic cohomology can be defined as usual. The filtered positive symplectic cohomology $SH_+^{*,\le a}(W;\Lambda)$ can also be defined, based on the asymptotic behavior lemma \cite[Lemma 2.3]{cieliebak2018symplectic} instead of the action filtration\footnote{The filtered symplectic cohomology is actually filtered by the contact action, which roughly coincides with the negative symplectic action when the filling is exact.}. See \cite[\S 8]{zhou2019symplectic} for a setup for the Calabi-Yau case, the general semi-positive case is similar. 
\end{remark}
\begin{proof}[Proof of Corollary \ref{cor:cor}]
	By the argument for Theorem \ref{thm:main} and \ref{thm:gen}, we have $SH_+^{*,\le a}(W;\Lambda)\to QH^{*+1}(W;\Lambda)\stackrel{\text{projection}}{\longrightarrow} H^0(W;\Lambda)$\footnote{We use $H^0(W;\Lambda)$ here to stand for the $\Lambda$-space spanned by $1\in H^0(W;\Lambda)$. Note that the degree $0$ part $QH^0(W;\Lambda)$ may be different from $H^0(W;\Lambda)$.} hits $1$ for a suitable $a$ for any (semi-positive) strong filling $W$. Note that for any $x\in QH^{*}(W;\Lambda)$ the quantum product $x\cup \cdot: QH^*(W;\Lambda)\to QH^{*+|x|}(W;\Lambda)$ is $\Lambda$-linear map between finite dimensional $\Lambda$-spaces. Therefore $x$ is either an invertible element or a zero divisor. If there is no zero divisor in the form of $1+A$ for $A\in\oplus_{i>0} H^{2i}(W;\Lambda)$, we know that $SH_+^{*,\le a}(W;\Lambda)\to QH^{*+1}(W;\Lambda)$ hits an invertible element. Then the proof of Proposition \ref{prop:kill} goes through and $SH_+^{*,\le a}(W;\Lambda)\to QH^{*+1}(W;\Lambda)$ is surjective. Then we can derive a topological contradiction as before.
 \end{proof}
\begin{remark}\label{rmk:other}
	Note that the exactness is used to get $H^*(W;\R)\to SH^*(W;\R)$ is a unital ring map. Then we use the fact that  $1+A$ is a unit in $H^*(W;\R)$ for $A\in \oplus_{2i>0}H^{2i}(W;\R)$, as $A$ is a nilpotent element. This property also holds for symplectically aspherical filling or fillings with undeformed quantum cohomology, as showed by Corollary \ref{cor:cor}. On the other hand, if the filling is Calabi-Yau, i.e.\ $c_1(W)=0$ in $H^2(W;\Q)$, then we have a $\Z$ grading and $A$ is necessarily $0$. Although the multiplicative structure might be deformed, $1$ is always a unit in $QH^*(W;\Lambda)$, hence we have surjectivity of $SH^{*,\le a}_+(W;\Lambda)\to QH^{*+1}(W;\Lambda)$ for a suitable $a$. In other words, our proof shows that contact manifolds in Theorem \ref{thm:main} and \ref{thm:gen} do not have symplectically aspherical or Calabi-Yau fillings. 
\end{remark}
Combining with $\Z_2$ and $\Z_3$ quotient singularities, we can prove Theorem \ref{thm:strong}.
\begin{proof}[Proof of Theorem \ref{thm:strong}]
	In view of Theorem \ref{thm:main}, we only need to prove the case for $n=2^k\ge 4$. In this case, we will use $(S^{2n-1}/\Z_3,\xi_{std})$. Let $W$ be an exact filling of $(S^{2n-1}/\Z_3,\xi_{std})$. By Proposition \ref{prop:Z_k}, we know that $1\le \sum_{i\in \N}\dim H^{2i}(W;\R) \le 3$ and $\sum_{i\in \N} \dim H^{2i+1}(W;\R)=0$. If $H^2(W;\R) =0$, then we know $H^1(W),H^2(W),H^3(W), H^{2n-1}(W),H^{2n-2}(W),H^{2n-3}(W)$ are all torsions. Moreover by Proposition \ref{prop:Z_p}, we have $H^2(W)\to H^2(S^{2n-1}/\Z_3)$ and $H^{2n-2}(W)\to H^{2n-2}(S^{2n-1}/\Z_3)$ are surjective. Then we arrive at a contradiction by the same argument in Theorem \ref{thm:main} and \ref{thm:gen}.
	
	In the case of $\dim H^2(W;\R)\ge 1$, we must have $H^{n}(W;\R) = 0$ with $n=2^k$ even. Then we have the following long exact sequence
	$$0\to H^{n}(W, S^{2n-1}/\Z_3) \to H^{n}(W) \to \Z_3 \to H^{n+1}(W,S^{2n-1}/\Z_3) \to H^{n+1}(W) \to 0.$$
	Since $H^{n-1}(W),H^n(W),H^{n+1}(W)$ are all torsions, then Lefschetz duality and universal coefficient theorem imply that $H^{n}(W, S^{2n-1}/\Z_3) \simeq H^{n+1}(W)$ and $H^{n+1}(W,S^{2n-1}/\Z_3) \simeq H^n(W)$.  The long exact sequence then becomes
	$$0\to H^{n+1}(W) \to H^{n}(W) \to \Z_3 \to H^{n}(W) \to H^{n+1}(W) \to 0.$$
	which will contradict that they are all torsions. 
\end{proof}

Inspired by Remark \ref{rmk:final}, we end this paper with the following conjecture.
\begin{conjecture}
	If the isolated quotient singularity $\C^n/G$ for finite nontrivial $G\in U(n)$ is terminal, then the link of the  singularity does not have symplectically aspherical fillings or Calabi-Yau fillings.
\end{conjecture}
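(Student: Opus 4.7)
The plan is to generalize the two-part strategy of this paper. On the symplectic side, by the McLean characterization recalled in the introduction \cite{mclean2016reeb}, terminality of $\C^n/G$ is equivalent to the existence of a contact form on the link $Y = S^{2n-1}/G$ all of whose Reeb orbits have strictly positive SFT degree. Fix such a form $\alpha$, chosen as a small $G$-invariant perturbation of the round standard form, and let $\bar{\gamma}$ be a Reeb orbit of minimal period. The lift of $\alpha$ to $S^{2n-1}$ is a small perturbation of an ellipsoid contact form; applying Proposition~\ref{prop:curve} on the universal cover produces a unit-count moduli space of holomorphic planes with a generic interior point constraint, asymptotic to the lift of $\bar\gamma$. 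Descending along the $|G|$-to-one cover, one obtains, as in Step 3 of Proposition~\ref{prop:unit}, a rigid moduli space in the symplectization of $Y$ with nonzero algebraic count (divided by the order of the stabilizer of $\bar\gamma$ in $G$ when $\bar\gamma$ is non-free).

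Let $W$ be a hypothetical Calabi-Yau or symplectically aspherical filling. Neck-stretch along $Y \subset W$; the positivity of SFT degrees guarantees, as in Steps 1--2 of Proposition~\ref{prop:unit}, that no additional negative puncture can appear in the stretched limit without killing the virtual dimension (each extra puncture on $\gamma'$ costs at least $\mu_{CZ}(\gamma')+n-3 > 0$). The curve therefore persists in $W$ and produces, after a cancellation analogous to Steps 4--7 of Proposition~\ref{prop:Z_k}, a closed class in $SH_+^{*,\le a}(W;\Lambda)$ for suitable small $a$ whose image in $QH^{*+1}(W;\Lambda)$ projects to a nonzero multiple of the unit. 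In the Calabi-Yau case, $c_1(W) = 0 \in H^2(W;\Q)$ upgrades the grading to an honest $\Z$-grading, forcing the nilpotent correction $A$ in Proposition~\ref{prop:kill} to vanish, so $1 \in QH^0(W;\Lambda)$ itself is killed; the symplectically aspherical case is handled identically with $QH^* = H^*$. Running the argument of Corollary~\ref{cor:cor}, the connecting map then surjects onto $QH^{*+1}(W;\Lambda)$, which bounds $\dim_\Lambda QH^*(W;\Lambda)$ by the number of short Reeb orbits --- a quantity determined only by $G$ and $n$.

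The contradiction is then topological: combined with Lefschetz duality and the long exact sequence of the pair $(W,Y)$, this bound together with nonvanishing of certain Chern classes $c_i(\xi_{std}) \in H^{2i}(Y)$ computed from an orbifold resolution of $\C^n/G$ (generalizing Propositions~\ref{prop:Chern} and~\ref{prop:Z_p}) must clash. I expect the main obstacle to lie precisely in this last step: producing nonvanishing Chern classes $c_i(\xi_{std})$ for arbitrary terminal $G \subset U(n)$. For $G = \Z_k$ the computation reduces to the binomial identity $(1+u)^n(1-ku)$ modulo $k$, but for a general finite subgroup the total Chern class must be extracted from an equivariant resolution of $\C^n/G$, and identifying a prime $p \mid |G|$ for which sufficiently many Chern classes survive modulo $p$ is delicate --- particularly because terminal nonabelian $G$ typically admit no crepant resolution. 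A secondary subtlety is the analog of the action inequalities in Step 5 of Proposition~\ref{prop:Z_k}, which must be rewritten for Reeb orbit families parametrized by fixed-point loci of $G$ on $\CP^{n-1}$ rather than by critical points of a Morse function on $\CP^{n-1}$ --- a cascade and Morse-Bott bookkeeping problem whose combinatorics reflects the character theory of $G$.
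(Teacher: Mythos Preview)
The statement you are attempting to prove is labeled in the paper as a \emph{Conjecture}; the paper does not claim to prove it and offers no proof to compare against. Indeed, even for the cyclic groups $G=\Z_p$ treated in the body of the paper, the result is only established under the $p$-adic hypothesis of Theorem~\ref{thm:gen}, not for all terminal $\C^n/\Z_p$ (i.e.\ all $n>p$); see Remark~\ref{rmk:final}. So what you have written is a research outline, not a proof, and should be read as such.

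That said, several steps in your outline are more fragile than you indicate. First, for non-abelian $G\subset U(n)$ a $G$-invariant perturbation of the round form on $S^{2n-1}$ is \emph{not} an ellipsoid (ellipsoids are only invariant under the maximal torus), so Proposition~\ref{prop:curve} does not apply to the cover as written; you would need a genuinely new existence/count argument for the plane upstairs. Second, the orbit you want is not the one of minimal period but the shortest \emph{contractible} orbit in $S^{2n-1}/G$ (in the $\Z_k$ case this is $\gamma_0^k$, not $\gamma_0$); your parenthetical about dividing by a stabilizer is misplaced since $G$ acts freely on $S^{2n-1}$. Third, positivity of SFT degree controls the \emph{index} of extra punctures but not the \emph{action} bookkeeping that the paper uses in tandem (Steps~4--7 of Proposition~\ref{prop:Z_k}); for general $G$ the period spectrum and the cascade combinatorics are governed by the conjugacy classes of $G$ and their fixed loci in $\CP^{n-1}$, and it is not clear that the telescoping cancellation \eqref{eqn:lazy} has an analogue. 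Finally, you correctly flag the topological step as the hardest, but note that even in the abelian case the paper's Chern-class argument already fails to cover all terminal $n$, so a direct generalization cannot succeed without new input.
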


\appendix
\section{}
In the following, we prove the property for Viterbo transfer map used in Proposition \ref{prop:MB}. We first recall an alternative way of defining the filtered positive symplectic cohomology $SH^{*,\le a}_+(W)$ following \cite{cieliebak2018symplectic}\footnote{Strictly speaking, \cite{cieliebak1996applications} uses the Hamiltonians that is $C^2$ small Morse on $W$, the equivalence of these two models can be obtained by an argument similar to \cite[Proposition 2.10]{zhou2019symplectic}.}. Let $\cH$ denote the set of admissible Hamiltonians with slope that is not the period of a Reeb orbit on $\partial W$. For $H\in \cH$, we define can define  $C^{\le a}(H)$ to be the subcomplex generated by critical point of $g$ (i.e.\ those with zero symplectic action) and $\check{\gamma},\hat{\gamma}$ with $\cA_{H}(\overline{\gamma})\ge -a$. We define $C^{\le a}_+(H)$, or equivalently, $C^{(0,a]}(H)$, to be the quotient complex of $C^{\le a}(H)$ generated only by $\check{\gamma}$ and $\hat{\gamma}$.\footnote{Note that we have a sign difference in the convention of symplectic action compared to \cite{cieliebak1996applications}.} For $H\le K$ in $\cH$, the continuation map induces maps $f^{\le a}_{HK}:C^{\le a}(H)\to C^{\le a}(K)$ and $f^{\le a}_{+,HK}C_+^{\le a}(H)\to C_+^{\le a}(K)$, both satisfy the obvious functorial property\footnote{Our $f_{HK}$ is the $f_{KH}$ in \cite{cieliebak1996applications}. We choose this convention, so that everything is parsed from left to right, i.e.\ $f_{HK}$ is from $H$ to $K$. Similarly, $\cM_{a,b}$ is this paper counts differential from $a$ to $b$. }. Then we define
$$SH^{\le a}(W):=\varinjlim_{H\in \cH}H^*(C^{\le a}(H)), \quad SH^{\le a}(W)_+:=\varinjlim_{H\in \cH}H^*(C_+^{\le a}(H))$$
\begin{proposition}\label{prop:iso2}
	The above definition is equivalent to the definition in \S \ref{s2}.
\end{proposition}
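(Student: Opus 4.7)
The plan is to identify both direct limits with the cohomology of a single chain complex $C^{\le a}(H_0)$ for a sufficiently narrow Hamiltonian $H_0\in\cH_a$.

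First I would verify that for $H\in\cH_a$ of sufficiently small width $w$, every non-constant Hamiltonian orbit has action in $(-a,0]$, so the tautological inclusion $C^{\le a}(H)\hookrightarrow C(H)$ is an equality of chain complexes. The action of an orbit at level $r$ with period $T=h'(r)$ is $-rT+h(r)$; viewed as a function of $T$ along the convex profile of $h$, one computes its derivative to be $-r<0$, so the action is strictly decreasing in $T$. Since $a$ is not the period of a Reeb orbit, there is a gap $T^*<a$ above which no orbit exists, and for $w$ small enough the action at $T=T^*$ remains strictly greater than $-a$. Narrow Hamiltonians are manifestly cofinal in $\cH_a$ (any $H\in\cH_a$ is dominated by a steeper narrow $H'$ of the same asymptotic slope, obtained by pushing the convex transition region closer to $r=1$). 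Combined with Proposition~\ref{prop:iso} this identifies the first definition with
\begin{equation*}
\varinjlim_{H\in\cH_a}H^*(C(H))\;=\;H^*(C(H_0))\;=\;H^*(C^{\le a}(H_0)),
\end{equation*}
for any fixed narrow $H_0\in\cH_a$.

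Next I would show the second definition also equals $H^*(C^{\le a}(H_0))$, which reduces to showing that the continuation $C^{\le a}(H_0)\to C^{\le a}(K)$ is a quasi-isomorphism for every $K\ge H_0$ in $\cH$. For this I would adapt the trick in the proof of Proposition~\ref{prop:iso}: by an action-constrained integrated maximum principle, any Floer cylinder (both for the differential and for continuation) with both asymptotic ends of action $\ge -a$ stays in the sublevel set $\{r\le r_a\}$ where $h'(r)\le a$. In this region, for $c>0$ large, $H_0+c$ dominates $K$, and the composition $C^{\le a}(H_0)\to C^{\le a}(K)\to C^{\le a}(H_0+c)$ is chain-homotopic to the direct continuation $C^{\le a}(H_0)\to C^{\le a}(H_0+c)$. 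For narrow $H_0$, the latter complex coincides with $C^{\le a}(H_0)$ and the continuation is the identity on cohomology (again by Proposition~\ref{prop:iso}), sandwiching $H^*(C^{\le a}(K))$ between two copies of $H^*(C^{\le a}(H_0))$.

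The main obstacle is the maximum-principle step: one needs to confine \emph{continuation} cylinders (not merely Floer cylinders for a single Hamiltonian) whose asymptotic data lie in $C^{\le a}$, despite $K$ possibly having slope $b>a$ and a large transition region. This is a standard but delicate extension of the argument recalled in Remark~\ref{rmk:SH} to the cascades setup, and is where the precise requirement of narrowness on $H_0$ enters. Once this is carried out, combining the two steps above yields the desired equivalence of definitions.
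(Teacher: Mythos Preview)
Your first step is correct, and the overall plan of reducing both limits to a single $H^*(C^{\le a}(H_0))$ is the right one. The gap is in the second step, precisely at the point you flag as ``the main obstacle.''

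The sandwich $C^{\le a}(H_0)\to C^{\le a}(K)\to C^{\le a}(H_0+c)$ does not make sense as written. If $K\in\cH$ has asymptotic slope $b>a$, then $K(r)\sim br$ eventually exceeds $H_0(r)+c\sim ar+c$ for every fixed $c$, so there is no monotone homotopy from $K$ to $H_0+c$ and hence no continuation map $C(K)\to C(H_0+c)$ respecting the action filtration. Your proposed fix --- an action-constrained maximum principle confining continuation cylinders to a region where $H_0+c\ge K$ --- is circular: the integrated maximum principle applies to solutions of an equation you have already set up with a monotone homotopy, but here monotonicity is what you need in order to know the continuation lands in $C^{\le a}$ in the first place. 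This is not the ``standard'' situation alluded to.

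The paper's proof bypasses the sandwich entirely. It restricts to a cofinal subfamily $\cH'\subset\cH$ consisting of those $K$ which, on $W\cup\partial W\times(1,1+w]$, coincide with some $K_a\in\cH_a$ of width $w/2$ and only afterwards bend to a larger slope. For such $K$ the integrated maximum principle applied at the level where $K$ is already linear of slope $a$ confines all Floer cylinders between generators of action $\ge -a$ to the region where $K=K_a$. Hence $C^{\le a}(K)$ is \emph{literally identified} with $C(K_a)$ as a chain complex for $w$ small, and the direct limit over $\cH'$ collapses to a direct limit over $\{K_a\}\subset\cH_a$, which is the \S\ref{s2} definition by Proposition~\ref{prop:iso}. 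The key idea you are missing is this truncation $K\mapsto K_a$: it replaces the ill-defined backwards continuation by an honest equality of complexes.
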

\begin{proof}
	We consider a special class of Hamiltonians $\cH'$, such that $H\in \cH'$ iff $H$ on $\partial W \times (1,1+w)$ coincides with a Hamiltonian in $\cH_a$ with width $\frac{w}{2}$ and $H\in \cH_b$ for some $b>a$. Then $\cH'$ is cofinal in $\cH$. Let $H\in \cH'$, we define $H_a$ to the Hamiltonian equals to $H$ on $W\cup \partial W\times (1,1+w)$ and then extends linearly with slope $a$ to $\widehat{W}$. Note that $C^{\le a}_+(H)=C^{\le a}(H_a)$, as $\R$-module they are same since all other orbits of $H$ have symplectic action $<-a$. Then by the integrated maximal principle, we have the differentials for $C^{\le a}_+(H)$ and $C^{\le a}(H_a)$ can be identified for suitable choice of almost complex structures. For $w$ sufficiently small, we have $C^{\le a}(H_a)=C(H_a)$. Moreover for $H\le K\in \cH'$, we have $H_a\le K_a\in \cH_a$. The functorial property of continuation maps implies that the compositions $C^{\le a}(H_a)\to C^{\le a}(K_a)\to C^{\le a }(K)$ and $C^{\le a}(H_a)\to C^{\le a}(H)\to C^{\le a}(K)$ are homotopic to each other. Therefore we have
	$$\varinjlim_{H\in \cH}H^*(C^{\le a}(H))=\varinjlim_{H\in \cH'}H^*(C^{\le a}(H))=\varinjlim_{H\in \cH'}H^*(C(H_a))=\varinjlim_{H\in \cH_a}H^*(C(H)),$$
	where the last isomorphism is by Proposition \ref{prop:iso}.  The proof for $SH_+^{\le a}(W)$ is identical.
\end{proof}

Let $V\subset W$ be an exact subdomain, then we can define $\cH^{W}(V)$ to be set of Hamiltonians that is $0$ on $V$ and is linear on $\partial W \times (1,\infty)_r$ for $r$ big with slope not a period of Reeb orbits on $\partial W$.  Then such Hamiltonians can be used to compute $SH^*(V)$ by the following.
\begin{lemma}[{\cite[Lemma 5.1]{cieliebak2018symplectic}}]
	For any positive real number $a$ that is not a period of a Reeb orbit on $\partial V$, we have
	$$SH^{*,\le a}(V)=\varinjlim_{H\in \cH^{W}(V)} H^*(C^{\le a}(H)),\quad SH_+^{*,\le a}(V)=\varinjlim_{H\in \cH^{W}(V)} H^*(C^{(0,a])}(H)).$$
\end{lemma}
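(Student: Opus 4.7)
The plan is to show that $\cH^W(V)$ is cofinal among the admissible Hamiltonians used to compute $SH^{*,\le a}(V)$ in Proposition \ref{prop:iso2}, and to identify the filtered Floer complexes on both sides. First I would analyze the $1$-periodic orbits of $H \in \cH^W(V)$ with symplectic action $\ge -a$. Since $H$ vanishes on $V$, these contribute constants in $V$ (zero action) together with nonconstant orbits sitting at levels of $\partial V$ or $\partial W$ where $H'(r)$ equals a Reeb period. The action formula $-rH'(r)+h(r)$ combined with the bound $\ge -a$ shows that only orbits in the collar of $\partial V$ with Reeb period $\le a$ can contribute. By choosing $H$ to rise sharply over a thin collar $\partial V\times (1,1+w)$ and then continuing linearly with slope larger than any Reeb period on $\partial W$ below the chosen cutoff, I would arrange that the action-restricted orbits of $H$ biject with those of an admissible Hamiltonian on $\widehat V$ of matching profile near $\partial V$.

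Second, I would invoke the integrated maximum principle (as in \cite[Lemma 2.5]{zhou2019symplectic} and \cite{abouzaid2010open}) to confine both Floer cylinders and continuation trajectories between low-action orbits to a neighborhood of $V$ inside $\widehat W$. This requires the almost complex structure to be cylindrical convex near every level where $H'(r)$ is a Reeb period on $\partial V$ or $\partial W$; standard convexity then prevents any trajectory from escaping into $W\setminus V$. This confinement ensures the differentials, the quotient by the Morse subcomplex $C^{\le 0}$, and the continuation maps on $C^{\le a}(H)$ coincide with those defined intrinsically for admissible Hamiltonians on $\widehat V$.

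Finally, I would establish mutual cofinality. Any admissible Hamiltonian on $\widehat V$ extends linearly into $W\setminus V$ and across $\partial W$ to a member of $\cH^W(V)$ with the same slope on $\partial V$; conversely any $H \in \cH^W(V)$ restricts and reshapes to an admissible Hamiltonian on $\widehat V$ by truncation. The continuation maps between these choices fit into homotopy-commuting squares again by the confinement argument, so passing to direct limits yields the first identification. The positive version follows verbatim after passing to the action quotient $C^{(0,a]}$. The hard part will be the uniform confinement across the entire family of continuation homotopies interpolating between members of $\cH^W(V)$ and ordinary admissible Hamiltonians on $\widehat V$, since the homotopies may carry complicated profiles on both the $\partial V$ and $\partial W$ collars simultaneously; this is the technical content that makes \cite[Lemma 5.1]{cieliebak2018symplectic} nontrivial.
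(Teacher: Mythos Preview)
The paper does not supply its own proof of this lemma: it is quoted verbatim as \cite[Lemma 5.1]{cieliebak2018symplectic} and used as a black box in the appendix. Your sketch follows the standard argument from that reference (cofinality of stair-step Hamiltonians, action estimates to localize generators near $\partial V$, and the integrated maximum principle to confine trajectories), so there is nothing in the present paper to compare it against beyond noting that your outline is consistent with how the paper invokes the result in the proof of Proposition~\ref{prop:Viterbo}.
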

Then the Viterbo transfer map for filtered symplectic cohomology is defined as follows \cite[Definition 5.2]{cieliebak2018symplectic},
$$\iota_{W,V}^{\le a}:SH^{\le a}(W)\to SH^{\le a}(V), \quad \iota_{W,V}^{\le a}:=\varinjlim_{\substack{H\le k,\\ H\in \cH(W),K\in \cH^W(V)}}f^{\le a}_{HK},$$
$$\iota_{+,W,V}^{\le a}:SH^{\le a}_+(W)\to SH^{\le a}_+(V), \quad \iota_{+,W,V}^{\le a}:=\varinjlim_{\substack{H\le k,\\ H\in \cH(W),K\in \cH^W(V)}}f^{\le a}_{+,HK},$$
where $f^{\le a}_{HK}:C^{\le a}(H)\to C^{\le a}(K), f^{\le a}_{+,HK}:C^{(0, a]}(H)\to C^{(0,a]}(K)$ are the continuation maps.

 The following proposition shows that the Viterbo transfer map is an isomorphism for trivial cobordism if there is no difference in the set of Reeb orbits that generate the filtered cochain complexes.
\begin{proposition}\label{prop:Viterbo}
	Let $W$ be an exact domain, such that the contact form on $\partial W$ is Morse-Bott. For $\delta>0$ we use $W_{1+\delta}$ to denote $W\cup \partial W\times (1,1+\delta]$. Assume $\partial W$ has no Reeb orbit with period in $[\frac{a}{1+\delta}, a]$. Then the Viterbo transfer maps $SH^{*,\le a}(W_{1+\delta};\R)\to SH^{*,\le a}(W;\R)$ is an isomorphism.
\end{proposition}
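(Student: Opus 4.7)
The plan is to exhibit a single pair of Hamiltonians through which the Viterbo transfer can be explicitly computed, and then to use a period filtration to reduce the comparison to a tautological identification on the associated graded.

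By Proposition \ref{prop:iso2} (and its obvious analogue inside $\cH^{W_{1+\delta}}(W)$), both $SH^{*,\le a}(W_{1+\delta};\R)$ and $SH^{*,\le a}(W;\R)$ can be computed from a single well-chosen Hamiltonian. I would choose $H\in\cH^{W_{1+\delta}}(W_{1+\delta})$ with $H\equiv 0$ on $W_{1+\delta}$ and a strictly convex transition to slope $\beta>a$ (with $\beta$ not a period of any Reeb orbit of $\alpha$) over a thin collar above $r=1+\delta$; and $K\in\cH^{W_{1+\delta}}(W)$ with $K\equiv 0$ on $W$, a strictly convex transition to slope $\beta$ over a thin collar above $r=1$, arranged so that $H\le K$ pointwise. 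Then $\iota^{\le a}_{W_{1+\delta},W}$ is realized by the monotone continuation map $f_{HK}^{\le a}:C^{\le a}(H)\to C^{\le a}(K)$. Next I would match generators: a Hamiltonian orbit at the level $r$ where $h'(r)=T$ (with $T$ a period of a Reeb orbit of $\alpha$) has symplectic action $-rT+h(r)$, which for sufficiently thin transitions is $\approx -(1+\delta)T$ for $H$ and $\approx -T$ for $K$. Hence the $\hat\gamma,\check\gamma$ pairs generating $C^{\le a}(H)$ correspond to Reeb orbits of $\alpha$ with period $T\le a/(1+\delta)$, and those generating $C^{\le a}(K)$ correspond to $T\le a$. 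The hypothesis that $\partial W$ carries no Reeb orbit with period in $[a/(1+\delta),a]$ makes the two generating sets canonically identified through the underlying Reeb orbits.

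The core of the argument is a filtration by Reeb period. I would filter both chain complexes by the period $T$ of the underlying Reeb orbit. In this paper's convention the autonomous Floer differential strictly decreases the symplectic action, hence the Reeb period (up to the multiplicative shift $1+\delta$ or $1$) cannot increase along differentials; a standard monotone-continuation energy estimate gives the analogous statement for $f_{HK}^{\le a}$. Thus $d_H$, $d_K$, and $f_{HK}^{\le a}$ all respect this filtration. On the associated graded at fixed period $T$, both sides reduce to the same Morse-Bott contribution (a single $\hat\gamma,\check\gamma$ pair per Reeb orbit of period $T$, with identical auxiliary Morse functions $g_{\overline\gamma}$ on the $S^1$-families pulled back via the Reeb flow identification), and the induced associated-graded map is the local continuation inside a single Morse-Bott family. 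This local continuation reduces to the unique ``radial'' trivial cylinder sliding the orbit from level $r\approx 1+\delta$ down to level $r\approx 1$; it is transverse, rigid, and contributes the identity by direct inspection.

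The main technical obstacle will be confirming that on each associated graded piece only the radial trivial cylinder contributes rigidly, with no ``lower-period'' or boundary-cascade intruders. This can be handled either by applying the integrated maximum principle to confine continuation trajectories to the thin slab where the interpolating Hamiltonian depends only on $r$, reducing the count to a one-dimensional radial problem, or by homotoping the continuation family $H_s$ to a continuous rescaling for which the local map is visibly the identity. Once the associated graded map is the identity, $f_{HK}^{\le a}$ is a filtered chain map whose associated graded is an isomorphism, hence a chain-level isomorphism itself, inducing the desired isomorphism $SH^{*,\le a}(W_{1+\delta};\R)\xrightarrow{\sim}SH^{*,\le a}(W;\R)$.
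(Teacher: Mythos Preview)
Your approach differs from the paper's, and the filtration step contains a genuine gap. The monotone continuation energy estimate gives $\cA_H(\gamma_+)\le \cA_K(\gamma_-)$; with your choices $\cA_H(\gamma_+)\approx -(1+\delta)T_+$ and $\cA_K(\gamma_-)\approx -T_-$, so the conclusion is only $T_-\le(1+\delta)T_+$, not $T_-\le T_+$. Hence the continuation map does \emph{not} respect the Reeb-period filtration: an $H$-orbit of period $T$ can map to $K$-orbits of any period up to $(1+\delta)T$. Since $\delta$ is fixed in the hypothesis (we only know there is a spectral gap near $a$, not near every period below $a$), you cannot absorb this by taking $\delta$ small. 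Without the filtration, the associated-graded/trivial-cylinder argument does not get off the ground.

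The paper avoids this entirely with a shift trick: take $H\in\cH_a(W_{1+\delta})$ and let $H'\in\cH_{a/(1+\delta)}(W)$ be the same function viewed in the $r$-coordinate of $W$ (a translation). Then $C(H)$ and $C(H')$ are literally the same chain complex, and the shift argument of Proposition~\ref{prop:iso} shows $f_{HH'}$ is a quasi-isomorphism. One then passes from $H'$ (slope $a/(1+\delta)$) to some $H''\in\cH_a(W)$; the spectral-gap hypothesis is used exactly here, since no new generators appear. Finally the paper checks via a cofinal family and the integrated maximum principle that $f_{HH''}$ really computes the Viterbo transfer. Your parenthetical suggestion of ``homotoping $H_s$ to a continuous rescaling for which the local map is visibly the identity'' is essentially this shift trick in embryo, but you would need to abandon the period filtration and develop that line instead.
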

\begin{proof}
	Let $H\in \cH_a(W_{1+\delta})$. Note that when we view $H$ as a function on $\widehat{W}$, i.e.\ using the $r$-coordinate from $W$, we have the slope of $H$ is $\frac{a}{1+\delta}$. We consider $H'\in \cH_\frac{a}{1+\delta}(W)$, which is a shift of $H$. We claim the Viterbo transfer map $SH^{*,\le a}(W_{1+\delta})\to SH^{*,\le a}(W)$ can be computed by the continuation map $f_{HH'}$. We first take $H''\in \cH_a(W)$ such that $H''\ge H'$, since there is no Reeb orbits of $\partial W$ with period in $[\frac{a}{1+\delta},a]$. The combination of arguments in Proposition \ref{prop:iso} and \ref{prop:iso2} yield that $f_{H'H''}$ is an quasi-isomorphism. Hence it is sufficient to prove $f_{HH''}$ computes the Viterbo map. By Proposition \ref{prop:iso2}, the Viterbo transfer map can be compute by 
	$$\varinjlim_{\substack{H\le K,\\ K\in \cH^{W_{1+\delta}}(W)}} f^{\le a}_{HK}.$$
	By the argument in \cite[Lemma 5.1]{cieliebak2018symplectic}, we can find a cofinal family of functions $K\in \cH^W(W)$, such that all generators with symplectic action $>-a$ are contained in an arbitrarily small neighborhood of $W$ where $K$ behave like a function in $\cH_b(W)$ form some $b\gg a$. We use $K_b$ to denote the truncation as in the proof of Proposition \ref{prop:iso2}. We can choose the cofinal family has the property that $K_b \ge K$, see the figure below. The integrated maximal principle implies that $f_{KK_b}^{\le a}$ is a quasi-isomorphism. Since $f_{HK_b}^{\le a}=f^{\le a}_{KK_b}\circ f_{HK}^{\le a}$, the Viterbo transfer map can be computed by 
	$$\varinjlim_{\substack{H\le K_b, b\gg a\\ K_b\in \cH_b(W)}} f^{\le a}_{HK_b}.$$
	\begin{figure}[H]
		\begin{center}
			\begin{tikzpicture}[xscale=0.8,yscale=0.2]
			\draw (0,0) to (10,0);
			\draw[->] (0,0) to (0,15);
			\draw (4,0) to (10,6);
			\draw (2,0) to (10,8);
			\draw (2,0) to (10,12);
			\draw (2,0) to (3,5) to (4,5) to (10,14);
			\draw (2,0) to (5,15);
			\node at (10.2,6) {$H$};
			\node at (10.2,8) {$H'$};
			\node at (10.2,12) {$H''$};
			\node at (10.2,14) {$K$};
			\node at (5.2, 14) {$K_b$};
			\draw [decorate,decoration={brace,amplitude=10pt},yshift=-0.2pt](0,0) -- (2,0) node [midway, yshift=1.3em] {$W$};
			\draw [decorate,decoration={brace,amplitude=10pt},yshift=+0.2pt](4,0) -- (0,0) node [midway, yshift=-1.3em] {$W_{1+\delta}$};
			\end{tikzpicture}
		\end{center}
	\end{figure}	
	Again by the argument in Proposition \ref{prop:iso2}, we have $f^{\le a}_{H''K_b}$ is an quasi-isomorphism for $K_b\in \cH_b(W)\ge H''$ with $b\gg a$. Therefore the Viterbo transfer is computed by $f_{HH''}$, hence also $f_{HH'}$. Then we can use the shift trick in Proposition \ref{prop:iso} to show $f_{HH'}$ is a quasi-isomorphism, which concludes the proof.
\end{proof}
As an application of the above proposition, we have the following.
\begin{proposition}\label{prop:Viterbo'}
	Assume $\partial W$ has no Reeb orbit with period in $[\frac{a}{1+\delta}, a]$, then $SH_+^{*,\le a}(W_{1+\delta})\to SH_+^{*,\le a}(W)$ is an isomorphism. For $0<b<a$, if $\partial W$ also has no  Reeb orbit with period in $[\frac{b}{1+\delta}, b]$, then $SH_+^{*,(b,a]}(W_{1+\delta})\to SH_+^{*,(b,a]}(W)$ is an isomorphism.
\end{proposition}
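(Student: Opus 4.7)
The plan is to reduce both statements to Proposition \ref{prop:Viterbo} via the five lemma, using naturality of the Viterbo transfer with respect to the tautological long exact sequences.

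First I would set up, for each of the fillings $W$ and $W_{1+\delta}$, the tautological long exact sequence
\begin{equation*}
\ldots \to H^{*}(\cdot;\R) \to SH^{*,\le a}(\cdot;\R) \to SH^{*,\le a}_+(\cdot;\R) \to H^{*+1}(\cdot;\R) \to \ldots,
\end{equation*}
and observe that the Viterbo transfer maps $\iota^{\le a}$ and $\iota^{\le a}_+$ fit into a commutative ladder between these two sequences, with the leftmost and rightmost columns being the restriction map $H^*(W_{1+\delta};\R) \to H^*(W;\R)$ induced by the Liouville inclusion. The naturality is standard: one chooses cofinal families $H \le K$ with $H \in \cH(W_{1+\delta})$, $K \in \cH^{W_{1+\delta}}(W)$ whose continuation maps respect the subcomplex $C_0$ of critical points of $g$ and the quotient complex $C_+$ generated by Reeb orbits, and the induced map on the short exact sequence $0 \to C_0 \to C \to C_+ \to 0$ yields the ladder.

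The restriction $H^*(W_{1+\delta};\R) \to H^*(W;\R)$ is an isomorphism because $W_{1+\delta}$ deformation retracts onto $W$ along the Liouville vector field. By Proposition \ref{prop:Viterbo}, the middle column $SH^{*,\le a}(W_{1+\delta};\R) \to SH^{*,\le a}(W;\R)$ is also an isomorphism. The five lemma then immediately yields that $SH_+^{*,\le a}(W_{1+\delta};\R) \to SH_+^{*,\le a}(W;\R)$ is an isomorphism, proving the first claim.

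For the windowed statement, I would use the short exact sequence of cochain complexes
\begin{equation*}
0 \to C^{(0,b]}_+(H) \to C^{(0,a]}_+(H) \to C^{(b,a]}_+(H) \to 0,
\end{equation*}
which after taking direct limits (and invoking Proposition \ref{prop:iso2}) produces a long exact sequence relating $SH^{*,\le b}_+$, $SH^{*,\le a}_+$, and $SH^{*,(b,a]}_+$; the Viterbo transfer is again natural with respect to this sequence, since the period-window decomposition of the generators is preserved by the continuation maps appearing in the definition of $\iota^{\le a}_+$. Applying the first part of the proposition to both $a$ and $b$ (both hypotheses being in effect), the two outer columns are isomorphisms, and the five lemma yields the desired isomorphism on $SH^{*,(b,a]}_+$. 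The only point that requires care, and which I expect to be the mildly delicate step, is verifying that the Viterbo transfer respects both the $C_0$/$C_+$ splitting and the action-window filtration simultaneously; this boils down to choosing the cofinal pair $H \le K$ so that the continuation trajectories are confined (by the integrated maximum principle of \cite[Proposition 2.6]{zhou2019symplectic}) to a region where no generator has action in the forbidden windows, which is possible precisely because of the hypothesis that $\partial W$ carries no Reeb orbits with periods in $[\tfrac{a}{1+\delta},a]$ (and $[\tfrac{b}{1+\delta},b]$).
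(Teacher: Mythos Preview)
Your argument is correct and follows essentially the same route as the paper: reduce to Proposition~\ref{prop:Viterbo} via the five lemma applied to a commutative ladder of long exact sequences. The only difference is organizational: the paper packages both claims into a single diagram
\[
\ldots \to SH^{*,\le b}(\cdot) \to SH^{*,\le a}(\cdot) \to SH^{*,(b,a]}(\cdot) \to \ldots
\]
(without the $+$ subscript), using that for $b>0$ the window $(b,a]$ contains no constant orbits so $SH^{*,(b,a]}=SH_+^{*,(b,a]}$, and the first claim is the degenerate case $b\to 0^+$; you instead treat the two claims separately, invoking the tautological sequence for the first and the positive action-window sequence for the second. Both are fine.
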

\begin{proof}
	Both claims follow from the previous proposition, the five lemma, and the following commutative long exact sequences.
	$$
	\xymatrix{
	\ldots \ar[r] & SH^{*,\le b}(W_{1+\delta})\ar[r]\ar[d] & SH^{*,\le a}(W_{1+\delta}) \ar[r]\ar[d] & SH^{*,(b,a]}(W_{1+\delta})\ar[r]\ar[d] & \ldots \\
	\ldots \ar[r] & SH^{*,\le b}(W)\ar[r] & SH^{*,\le a}(W) \ar[r] & SH^{*,(b,a]}(W)\ar[r] & \ldots 
   }
  $$
\end{proof}

\bibliographystyle{plain} 
\bibliography{ref}

\begin{thebibliography}{10}

\bibitem{abouzaid2010open}
Mohammed Abouzaid and Paul Seidel.
\newblock An open string analogue of {V}iterbo functoriality.
\newblock {\em Geometry \& Topology}, 14(2):627--718, 2010.

\bibitem{borman2015existence}
Matthew~Strom Borman, Yakov Eliashberg, and Murphy.
\newblock Existence and classification of overtwisted contact structures in all
  dimensions.
\newblock {\em Acta Mathematica}, 215(2):281--361, 2015.

\bibitem{bourgeois2003morse}
Fr{\'e}d{\'e}ric Bourgeois.
\newblock A {M}orse-{B}ott approach to contact homology.
\newblock {\em Symplectic and contact topology: interactions and perspectives
  (Toronto, ON/Montreal, QC, 2001)}, 35:55--77, 2003.

\bibitem{bourgeois2003compactness}
Fr{\'e}d{\'e}ric Bourgeois, Yakov Eliashberg, Helmut Hofer, Kris Wysocki, and
  Eduard Zehnder.
\newblock Compactness results in symplectic field theory.
\newblock {\em Geometry \& Topology}, 7(2):799--888, 2003.

\bibitem{bourgeois2009symplectic}
Fr{\'e}d{\'e}ric Bourgeois and Alexandru Oancea.
\newblock Symplectic homology, autonomous {H}amiltonians, and {M}orse-{B}ott
  moduli spaces.
\newblock {\em Duke mathematical journal}, 146(1):71--174, 2009.

\bibitem{bowden2012exactly}
Jonathan Bowden.
\newblock Exactly fillable contact structures without {S}tein fillings.
\newblock {\em Algebraic \& Geometric Topology}, 12(3):1803--1810, 2012.

\bibitem{bowden2014topology}
Jonathan Bowden, Diarmuid Crowley, and Andr{\'a}s~I Stipsicz.
\newblock The topology of {S}tein fillable manifolds in high dimensions {I}.
\newblock {\em Proceedings of the London Mathematical Society},
  109(6):1363--1401, 2014.

\bibitem{bowden2019bourgeois}
Jonathan Bowden, Fabio Gironella, and Agustin Moreno.
\newblock Bourgeois contact structures: tightness, fillability and
  applications.
\newblock {\em arXiv preprint arXiv:1908.05749}, 2019.

\bibitem{cieliebak1996applications}
Kai Cieliebak, Andreas Floer, Helmut Hofer, and Kris Wysocki.
\newblock Applications of symplectic homology {II}: Stability of the action
  spectrum.
\newblock {\em Mathematische Zeitschrift}, 223(1):27--45, 1996.

\bibitem{cieliebak2018symplectic}
Kai Cieliebak and Alexandru Oancea.
\newblock Symplectic homology and the {E}ilenberg--{S}teenrod axioms.
\newblock {\em Algebraic \& Geometric Topology}, 18(4):1953--2130, 2018.

\bibitem{question}
Sylvain Courte.
\newblock {AIM} workshop - contact topology in higher dimensions - (may 21-25,
  2012): Questions and open problems.
\newblock
  \url{https://aimath.org/WWN/contacttop/notes_contactworkshop2012.pdf}, 2012.

\bibitem{diogo2019symplectic}
Luis Diogo and Samuel~T Lisi.
\newblock Symplectic homology of complements of smooth divisors.
\newblock {\em Journal of Topology}, 12(3):966--1029, 2019.

\bibitem{eliashberg1989classification}
Yakov Eliashberg.
\newblock Classification of overtwisted contact structures on 3-manifolds.
\newblock {\em Inventiones mathematicae}, 98(3):623--637, 1989.

\bibitem{eliashberg1993legendrian}
Yakov Eliashberg.
\newblock Legendrian and transversal knots in tight contact 3-manifolds.
\newblock {\em Topological methods in modern mathematics (Stony Brook, NY,
  1991)}, 171193, 1993.

\bibitem{eliashberg1996unique}
Yakov Eliashberg.
\newblock Unique holomorphically fillable contact structure on the 3-torus.
\newblock {\em International Mathematics Research Notices}, 1996(2):77--82,
  1996.

\bibitem{eliashberg2006geometry}
Yakov Eliashberg, Sang~Seon Kim, and Leonid Polterovich.
\newblock Geometry of contact transformations and domains: orderability versus
  squeezing.
\newblock {\em Geometry \& Topology}, 10(3):1635--1747, 2006.

\bibitem{etnyre2002tight}
John~B Etnyre and Ko~Honda.
\newblock Tight contact structures with no symplectic fillings.
\newblock {\em Inventiones Mathematicae}, 148(3):609, 2002.

\bibitem{ghiggini2005strongly}
Paolo Ghiggini.
\newblock Strongly fillable contact 3--manifolds without {S}tein fillings.
\newblock {\em Geometry \& Topology}, 9(3):1677--1687, 2005.

\bibitem{irie2014hofer}
Kei Irie.
\newblock Hofer--{Z}ehnder capacity of unit disk cotangent bundles and the loop
  product.
\newblock {\em Journal of the European Mathematical Society},
  16(11):2477--2497, 2014.

\bibitem{lazarev2016contact}
Oleg Lazarev.
\newblock Contact manifolds with flexible fillings.
\newblock {\em Geometric and Functional Analysis}, pages 1--67, 2020.

\bibitem{massot2013weak}
Patrick Massot, Klaus Niederkr{\"u}ger, and Chris Wendl.
\newblock Weak and strong fillability of higher dimensional contact manifolds.
\newblock {\em Inventiones mathematicae}, 192(2):287--373, 2013.

\bibitem{mcduff2012j}
Dusa McDuff and Dietmar Salamon.
\newblock {\em J-holomorphic curves and symplectic topology}, volume~52.
\newblock American Mathematical Soc., 2012.

\bibitem{mclean2016reeb}
Mark McLean.
\newblock Reeb orbits and the minimal discrepancy of an isolated singularity.
\newblock {\em Inventiones mathematicae}, 204(2):505--594, 2016.

\bibitem{milnor2016characteristic}
John Milnor and James~D Stasheff.
\newblock {\em Characteristic Classes.(AM-76), Volume 76}, volume~76.
\newblock Princeton university press, 2016.

\bibitem{ritter2014floer}
Alexander~F Ritter.
\newblock Floer theory for negative line bundles via {G}romov--{W}itten
  invariants.
\newblock {\em Advances in Mathematics}, 262:1035--1106, 2014.

\bibitem{seidel2006biased}
Paul Seidel.
\newblock A biased view of symplectic cohomology.
\newblock {\em Current developments in mathematics}, 2006(1):211--254, 2006.

\bibitem{zhou2018quotient}
Zhengyi Zhou.
\newblock Quotient theorems in polyfold theory and {$S^1$}-equivariant
  transversality.
\newblock {\em Proceedings of the London Mathematical Society},
  121(5):1337--1426, 2020.

\bibitem{zhou2019symplectic}
Zhengyi Zhou.
\newblock Symplectic fillings of asymptotically dynamically convex manifolds
  {I}.
\newblock {\em Journal of Topology}, to appear.

\end{thebibliography}

\end{document}